\newtheorem{theorem}{Theorem}[section]
\newtheorem{proposition}[theorem]{Proposition}
\newtheorem{lemma}[theorem]{Lemma}
\def\th@newremark{\th@remark\thm@headfont{\bfseries}}   
\theoremstyle{definition}
\theoremstyle{newremark}
\newenvironment{remark}
  {\pushQED{\qed}\remarkx}
  {\popQED\endremarkx}
\newcommand{\eps}{\varepsilon}
\def\Simpl{\mathbb{S}}
\begin{document}

 \title{Genealogical distance under selection}
 \author{Max Grieshammer\footnote{Institute for Mathematics, Friedrich-Alexander Universität Erlangen-Nürnberg, Germany;  	max.grieshammer@math.uni-erlangen.de, MG was supported by SPP 1590}
 }
 
 \maketitle
 
\thispagestyle{empty}

\begin{abstract}
We study the genealogical distance of two randomly chosen individuals in
a population that evolves according to a two type Moran model with
mutation and selection. We prove that this distance is stochastically
smaller than the
corresponding distance in the neutral model, when the population size is
large.
Moreover, we prove convergence of the genealogical distance under
selection to the distance in the neutral case, when the system is in equilibrium and
the selection parameter tends to infinity.

\end{abstract}

\noindent {\bf Keywords:} Genealogical distance, Moran model, selection, stochastic dominance. 
\\

\smallskip

\noindent {\bf AMS 2010 Subject Classification:} Primary: 60J80, 60E15; Secondary: 60J68, 92D25, 92D15;\\

\tableofcontents

\section{Introduction}

The study of genealogies in population models has a long history. 
In the case of neutral Wright-Fisher populations or Moran models Kingman \cite{kingman1982coalescent} introduced 
a coalescent process that can be used to analyze genealogical quantities.  
To be able to include selection in the genealogical model, Krone and Neuhauser \cite{Krone}, \cite{neuhauser1997genealogy}
introduced the so called ancestral selection graph. They also were the first studying the 
typical genealogical distance of two randomly chosen individuals under selection, which is also the object we are interested in. 
They gave a first order expansion of the Laplace transform of the distance in the selection parameter and showed that 
the linear term vanishes. 
There are other ideas on analyzing the genealogy in the Wright-Fisher population like, for example, the lookdown construction of Donelly and Kurtz \cite{donnelly1996countable}, \cite{donnelly1999genealogical}. 
But, as far as we know, there were no further results on the genealogical distance of two individuals when selection is present. 

We note that most of the above methods, on analyzing genealogical properties, have in common that they fix a time and construct or read off the genealogy by looking backward in time. 
In contrast to this approach  Depperschmidt, Greven and Pfaffelhuber \cite{DGP12} considered the genealogy as an evolving object and 
introduced a tree-valued Markov process that describes dynamically 
the genealogical relationship in the Wright-Fisher population with mutation and selection. Using generator calculations under equilibrium, 
this approach has the advantage  that one can find (relatively easy) recurrence relations for the genealogical 
distance of two individuals. As a result they were able to generalize the first order expansion of Krone and Neuhauser to a second 
order expansion and  proved that for small selection parameter the distance under selection is shorter (in the Laplace order) 
compared to the neutral model. 

We will also follow a forward in time approach to prove that the typical 
genealogical distance of two randomly chosen individuals is shorter under selection compared to neutrality. Our result generalizes the existing results
in two ways namely we have the dominance not only in the Laplace 
but also in the {\it stochastic} order and we are not restricted to small selection parameters nor an equilibrium 
population.  Instead we can show this result for {\it all} times and {\it all} selection parameters. \\

We consider a population of $N \in \mathbb N$ individuals, where each individual $i$ carries some type $u_i \in \{0,1\}$, and evolves according to the following dynamic: At rate $\binom{N}{2}$ we pick two individuals $i,j$ uniformly without replacement from the population and, with probability $\frac{1}{2}(1 + \frac{\alpha}{N}(u_i - u_j))$,  individual $j$ dies and is replaced by an offspring of  individual $i$ and, with probability $\frac{1}{2}(1 + \frac{\alpha}{N}(u_j - u_i))$, individual $i$  dies and is replaced by an offspring of  individual $j$. 
We call $\alpha \ge 0$ the \emph{selection} parameter and note that type $1$ has a selective advantage. In the case where $j$ is replaced by an offspring of $i$, we call $i$ the ancestor of $j$, and, in the case where $i$ is replaced by an offspring of $j$, we call $j$ the ancestor of $i$. Note that the child of an individual $i$ always has the same type as its parent.  \par 
In addition to the above transition, at rate $N\cdot \vartheta_0$ ($N \cdot \vartheta_1$) a randomly chosen unfit (fit) individual $i$ mutates to a fit (unfit) individual. 
We call $\vartheta_0,\vartheta_1 \ge 0$ the \emph{mutation} parameters (see Section \ref{sec.model} for the definition of the model). \\

We note that the above construction generates a distance $r_t$ on the set of individuals, called the \emph{genealogical distance}, where 
$r_t(i,j)$ is defined as the time to the most recent common ancestor of two individuals $i,j$ in the today's (i.e. at time $t$ measured from the beginning of time) population (see Section \ref{sec.construction}). In this paper we are interested in 
this distance, to be precise we are interested in the random variable $R^{N,\alpha}_t$ given by
\begin{equation}
P(R^{N,\alpha}_t \le h) := \frac{1}{N^2}\sum_{i,j = 1}^N P(r_t(i,j) \le h),\qquad h\ge 0.
\end{equation}
Note that 
\begin{equation}
P(R^{N,\alpha}_t \le h) =   E\left[\mu^N\otimes \mu^N(\{i,j: r_t(i,j) \le h\})\right],
\end{equation}
where $\mu^N$ is the uniform distribution on the set of individuals $\{1,\ldots,N\}$. In this sense,  
\begin{itemize}
\item[] {\it $R^{N,\alpha}$ is the typical genealogical distance of two randomly chosen individuals.}
\end{itemize}

To analyze this quantity it is convenient to consider the large population limit and one can prove (see \cite{DGP12} - compare also Section \ref{sec.tree-valued}) that $R^{N,\alpha}:= (R^{N,\alpha}_t)_{t \ge 0} \Rightarrow R^\alpha$. \par 
At this point we note that  the frequency of type $1$  process $\bar Y^N:=\bar Y^{N,\alpha}:= (\bar Y_t^{N,\alpha})_{t \ge 0}$ converges for $N \rightarrow \infty$,  where the limit process, $\bar Y$, is called the  \emph{Wright-Fisher diffusion with mutation and selection} (see for example Section 10.2 in \cite{EK86}). This process can be characterized via the following operator $G$, that acts on twice continuously differentiable functions, $C^2([0,1])$:
\begin{equation}
Gf(x) := \frac{1}{2} x(1-x) \frac{\partial^2}{\partial x^2} f(x) + (-\vartheta_1 x + \vartheta_0 (1-x) + \alpha x (1-x))  \frac{\partial}{\partial x} f(x).
\end{equation}
In this sense we call $R^\alpha$ the genealogical distance in a Wright-Fisher population.  \\

In this paper we will prove that there is a coupling such that
\begin{equation}
P(R^\alpha_t \le R^0_t) = 1,\qquad \text{for all } \alpha \ge 0 \text{ and all } t \ge 0,
\end{equation}
i.e. the genealogical distance in the Wright-Fisher population under selection at all times $t$ is stochastically smaller than 
the distance under neutrality, where we assume that at the beginning of time, $t = 0$, all individuals are related, i.e. $R^\alpha_0 = R^0_0 = 0$ (see Section \ref{sec.main.res}). Note that in this case, the distance under neutrality is given by 
\begin{equation}
P(R^0_t \le h) = \left\{ \begin{array}{ll}
1-e^{-h},&\quad h < t\\
1,&\quad h \ge t.
\end{array}\right.
\end{equation}
That is, the law of the distance under neutrality is (up to time $t$) the exponential distribution. \par

The result gives the (much more general) answer to a question stated in \cite{DGP12}, namely if for small $\alpha$ one has $R^\alpha \le R^0$ stochastically. In their work they proved that when $\alpha$ is small and the system is in equilibrium, one has domination in the Laplace order, which is,  in general, weaker than the stochastic order. \\ 

It is known (see again \cite{DGP12}) that for small selection parameters (i.e. $\alpha \rightarrow 0$) one has $R^\alpha \Rightarrow R^0$
and it is (as far as we know) still open to prove what happens when the selection parameter is large (i.e. $\alpha \rightarrow \infty$). 
\cite{DGP12} suggested that in this case there are only fit types left in the population and hence there 
is no selective advantage and therefore the genealogical distance should look similar to the neutral case. We give a proof of this claim, 
namely we prove that when the system is in equilibrium, one has (see Section \ref{sec.main.res})
\begin{equation}
R^\alpha_\infty \Rightarrow R^0_\infty,\qquad \text{for }  \alpha \rightarrow \infty.
\end{equation}

\section{The genealogical distance in the Moran model}

In this section we give a formal definition of the quantities we are interested in. In Section \ref{sec.model}
we define the model. In Section \ref{sec.construction} we give the construction of the model in terms of Poisson 
processes, which allows us to define the genealogical distance. Finally, in Section \ref{sec.tree-valued}, we give a 
very short introduction to the tree-valued model, i.e. the model with values in marked-(ultra-)metric measure spaces, and 
sketch how this setup can be used to prove convergence of the genealogical distance. 

\subsection{The model}\label{sec.model}

We want to describe the genealogy of a population, consisting of  $N \in \mathbb N$ individuals, where we assume that all individuals $k \in \{1,\ldots,N\}$ carries some type $u_k \in \mathbb K := \{0,1\}$. We assume further, that the types $(u_k(t))_{k \in \{1,\ldots,N\}}$ evolve according to the following dynamics:

\begin{itemize}
\item[(1)] {\it Resampling:} Every pair $i \neq j$ is replaced with rate one. 
If such an event occurs, $i$ is replaced by an offspring of $j$ with probability $\frac{1}{2}$, or $j$ is replaced by an
offspring of $i$ with probability $\frac{1}{2}$ and the offspring always carries the type of its parent. 
\item[(2)] {\it Mutation:} The type of an individual $i$ changes (independent of the others) from $u_i$ to $1-u_i$ with the {\it mutation rates} 
\begin{equation}
\begin{split}
\vartheta_0 \ge 0,& \quad \text{if } u_i = 0, \\
\vartheta_1 \ge 0,& \quad \text{if } u_i = 1. 
\end{split}
\end{equation}
\item[(3)] {\it Selection:}  For a pair $i \neq j$ at rate
\begin{equation}
\frac{\alpha}{N} \cdot u_i
\end{equation}
a selection event takes place, i.e. individual $j$ is replaced by an offspring of individual $i$, and we call $\alpha \ge 0$ the {\it selection parameter}. Note that type $1$ has a selective advantage compared to $0$ and we call $1$ the {\it fit} type. 
\end{itemize}

\begin{remark}
In order to analyze this model, we will consider the large population limit (i.e. $N \rightarrow \infty$). In this situation we note that 
even though the above model differs a bit from the model described in the introduction, the large population limits coincide. And we decided to use the above construction for technical reasons. 
\end{remark}

\subsection{Graphical construction and the genealogical distance} \label{sec.construction}

Here we give the formal (graphical) construction of the above model, where we follow the approach in \cite{gpw_mp} and \cite{DGP12}. Let $I_N:= \{1,\ldots,N\},\ N \in \mathbb N$, $\mathbb K := \{0,1\}$ and 
\begin{equation}\label{ppp}
\left\{\eta^{i,j}_{\text{res}}:\ i,j \in I_N,\ i \not=j\right\}
\end{equation}
be a realization of a family of independent rate $1$ Poisson point processes,
\begin{equation}\label{ppp_sel}
\left\{\eta^{i,j}_{\text{sel}}:\ i,j \in I_N,\ i \not=j\right\}
\end{equation}
be a realization of a family of independent rate $\frac{\alpha}{N} $ Poisson point processes
and 
\begin{equation}\label{ppp_mut}
\left\{\eta^{i}_{\text{mut}_0}:\ i \in I_N\right\},\qquad \left\{\eta^{i}_{\text{mut}_1}:\ i \in I_N\right\}
\end{equation}
be a realization of families of independent rate $\vartheta_0 $, $\vartheta_1$ Poisson point processes. We assume
that all random mechanisms are independent of each other.\\

Now we consider the type process $(u_i(t))_{i \in I_N, t \ge 0}$, that starts in some initial value $(u_1(0),\ldots,u_N(0)) \in \mathbb K^{I_N}$, where we assume for the rest of this paper: 
\begin{itemize}
\item[] {\it $(u_1(0),\ldots,u_N(0)) = (\bar u_1,\ldots,\bar u_N)$, where $(\bar u_i)_{i \in \mathbb N}$ are independent and identically distributed }
\end{itemize}
 and evolves according to the following rules:
If $\eta^{i,j}_{\text{res}}(\{t\}) = 1$, then we set $u_j(t) = u_i(t-)$ (such an event is called resampling event). If $\eta^{i,j}_{\text{sel}}(\{t\}) = 1$, then we set $u_j(t) = u_i(t-)$ if $u_i(t-) = 1$ 
(such an event is called selection event). If $\eta^{i}_{\text{mut}_0}(\{t\}) = 1$, then we set $u_i(t) = 1$ if $u_i(t-) = 0$ 
and if $\eta^{i}_{\text{mut}_1}(\{t\}) = 1$, then we set $u_i(t) = 0$ if $u_i(t-) = 1$ (such events are called mutation events).\\

Next we need the notion of ancestors of two individuals $i$ and $j$ at time $t$ and we start with the following definition: 

\begin{itemize}
\item[] For $i,i' \in I_N$, $0\le h< t < \infty$ we say that there is a {\it path} from $(i,h)$ to $(i',t)$ if there is an  $n \in \mathbb N$, 
$h \le t_1 < t_2 < \cdots < t_n \le t$ and
$j_1,\ldots,j_n \in I_N$ such that for all $k \in \{1,\ldots,n+1\}$ ($j_0:= i, j_{n+1}:=i'$)
$\eta^{j_{k-1},j_k}_{\text{res}}\{t_k\} = 1$ or $\eta^{j_{k-1},j_k}_{\text{sel}}\{t_k\} \cdot u_{j_{k-1}}(t_k-) = 1$, $\eta^{x,j_{k-1}}_{\text{res}}((t_{k-1},t_k))= 0$ and $\eta^{x,j_{k-1}}_{\text{sel}}(\{s\}) \cdot u_{x}(s-)= 0$ for all $s  \in (t_{k-1},t_k)$ and $x \in I_N$.
\end{itemize}

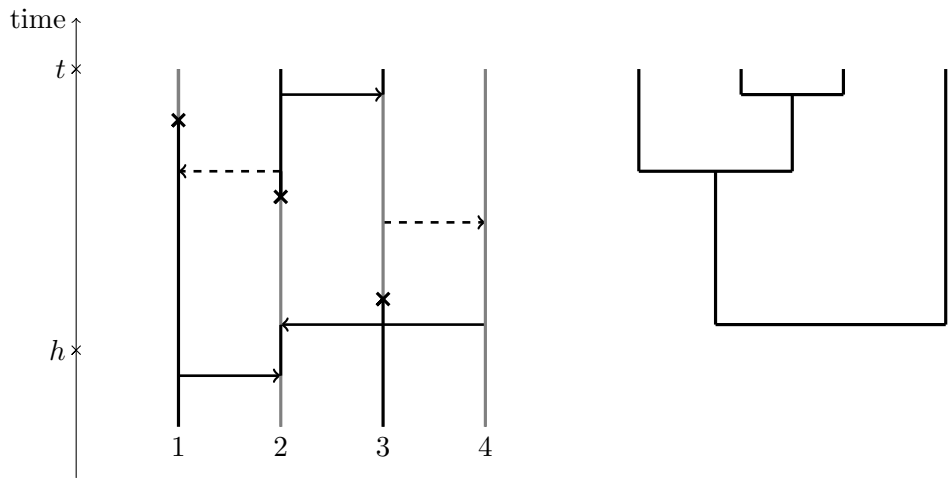
\begin{figure}[ht]
\begin{center}
\begin{tikzpicture}[scale = 0.68]
\draw [->] (2,-4) -- (2,5);
\draw [line width=1.2pt] (4,-3)-- (4,4);
\draw (6,-3)-- (6,4);
\draw (8,-3)-- (8,4);
\draw (10,-3)-- (10,4);
\draw [->,line width=1.pt] (4,-2) -- (6,-2);
\draw [->,line width=1.pt] (10,-1) -- (6,-1);
\draw [->,line width=1.pt,dash pattern=on 3pt off 3pt] (8,1) -- (10,1);
\draw [->,line width=1.pt,dash pattern=on 3pt off 3pt] (6,2) -- (4,2);
\draw [->,line width=1.pt] (6,3.5) -- (8,3.5);
\draw [line width=1.2pt,color=gray] (4,3)-- (4,4);
\draw [line width=1.2pt,color=gray] (6,-3)-- (6,-2);
\draw [line width=1.2pt] (6,-2)-- (6,-1);
\draw [line width=1.2pt,color=gray] (6,-1)-- (6,1.5);
\draw [line width=1.2pt] (6,1.5)-- (6,2);
\draw (6,1.5)-- (6,2);
\draw [line width=1.2pt] (6,1.5)-- (6,4);
\draw [line width=1.2pt,color=gray] (10,-3)-- (10,4);
\draw [line width=1.2pt] (8,-3)-- (8,-0.5);
\draw [line width=1.2pt,color=gray] (8,-0.5)-- (8,3.5);
\draw [line width=1.2pt] (8,3.5)-- (8,4);
\draw (4,-3) node[anchor=north] {$1$};
\draw (6,-3) node[anchor=north] {$2$};
\draw (8,-3) node[anchor=north] {$3$};
\draw (10,-3) node[anchor=north] {$4$};
\draw (2,4) node[anchor=east] {$t$};
\draw (2,-1.5) node[anchor=east] {$h$};
\draw (2,5) node[anchor=east] {$\text{time}$};
\draw [line width=1.2pt] (15,4)-- (15,3.5);
\draw [line width=1.2pt] (15,3.5)-- (17,3.5);
\draw [line width=1.2pt] (17,3.5)-- (17,4);
\draw [line width=1.2pt] (13,4)-- (13,2);
\draw [line width=1.2pt] (16,3.5)-- (16,2);
\draw [line width=1.2pt] (13,2)-- (16,2);
\draw [line width=1.2pt] (19,4)-- (19,-1);
\draw [line width=1.2pt] (19,-1)-- (14.5,-1);
\draw [line width=1.2pt] (14.5,-1)-- (14.5,2);
\begin{scriptsize}
\draw [line width=1.2pt] (4,3)-- ++(-3.5pt,-3.5pt) -- ++(7pt,7pt) ++(-7pt,0) -- ++(7pt,-7pt);
\draw [line width=1.2pt] (6,1.5)-- ++(-3.5pt,-3.5pt) -- ++(7pt,7pt) ++(-7pt,0) -- ++(7pt,-7pt);
\draw [line width=1.2pt] (8,-0.5)-- ++(-3.5pt,-3.5pt) -- ++(7pt,7pt) ++(-7pt,0) -- ++(7pt,-7pt);
\draw (2,4)-- ++(-2.5pt,-2.5pt) -- ++(5pt,5pt) ++(-5pt,0) -- ++(5pt,-5pt);
\draw (2,-1.5)-- ++(-2.5pt,-2.5pt) -- ++(5pt,5pt) ++(-5pt,0) -- ++(5pt,-5pt);
\end{scriptsize}
\end{tikzpicture}
\caption{\label{fig.TVMMMS} \footnotesize On the left side we see the graphical construction of the Moran model with mutation and selection: $\rightarrow$ is a resampling arrow, $\dashrightarrow$ is a selective arrow and $\text{\bf X}$ indicates a mutation event.  $\bullet$ is the fit type, which corresponds to $1$, $\color{gray}{\bullet}$ is the unfit type, which corresponds to $0$. Note that selection arrows can only be used by a fit type. On the right side we see the genealogical tree of the population at time $t$. 
In this case the ancestor of all individuals at time $h$ would be individual $4$, i.e.  $A_h(i,t) = 4$ for all $i = 1,\ldots,4$}
\end{center}
\end{figure}

\noindent Note that for all $i \in I_N$ and $0 \le h \le t$ there exists an unique element
\begin{equation}
A_h(i,t) \in I_N
\end{equation}
with the property that there is a path from $(A_h(i,t),h)$ to $(i,t)$. We call $A_h(i,t)$ the {\it ancestor of  $(i,t)$ at time $h$} (see Figure \ref{fig.TVMMMS}). \par

Let $r_0$ be a (pseudo-ultra)metric on $I_N$ and  $i,j \in I_N$. Then we define the following (pseudo-ultra)metric on $I_N$:
{\small
\begin{equation}\label{eq.ultrametric}
r_t(i,j):=\left\{ \begin{array}{ll}
t-\sup\{h \in [0,t]:A_h(i,t)=A_h(j,t)\},&\quad \textrm{if } A_0(i,t) = A_0(j,t),\\[0.2cm]
t + r_0(A_0(i,t), A_0(j,t)) ,&\quad \textrm{if } A_0(i,t) \not= A_0(j,t).
\end{array}\right.
\end{equation}
}

We will in the following always assume that 
\begin{itemize}
\item[] {\it $r_0 \equiv 0$, i.e. all individuals are related at time $0$.}
\end{itemize}

Finally we define the random variable $R^{N,\alpha}_t$  by

\begin{equation}
P(R^{N,\alpha}_t\le h) := \frac{1}{N^2} \sum_{i,j = 1}^N P(r_t(i,j) \le h).
\end{equation} 

In the following we will abbreviate $R^{N}_t := R^{N,\alpha}_t$, when the context is clear. 

\begin{remark}
Note that by the above assumption, we have $P(R^N_t \le h) = 1$ for all $h \ge t$. 
\end{remark}

\subsection{A note on the tree-valued Moran model and the genealogical distance in the large population limit}\label{sec.tree-valued}

Since we need some of the results for the tree-valued Moran model, we give here a (very) short introduction to this model. Define  $\mu^N\in \mathcal M_1(I_N\times \mathbb K)$ by
\begin{equation}
\mu^N_t= \frac{1}{N} \sum_{k \in I_N} \delta_{(k,u_k(t))}. 
\end{equation}

Since $r_t$, defined in the previous section, is only a pseudo-metric, we  consider the following equivalence relation $\approx_t$  on $I_N$: 
 $x \approx_t y \Leftrightarrow r_t(x,y) = 0$.
We denote by $\tilde I_N^t:= I_N\! /\!\!\approx_t$ the set of equivalence classes and note that we can find a set of representatives $\bar I_N^t$
such that $\bar I_N^t \rightarrow \tilde I_N^t,\  x \to [x]_{\approx_t}$ is a bijection. We define 
\begin{align}
\bar r_t(\bar  i,\bar  j) =  r_t(\bar i,\bar j),\quad \bar \mu^N_t(\{\bar i\}\times \cdot) &=\mu^N_t( [\bar i]_{\approx_t} \times \cdot),\qquad  \bar i,\bar j \in \bar I_N^t.
\end{align}

Then the {\it tree-valued Moran model with mutation and selection}, of size $N$ is defined as
\begin{equation}
\mathcal U_t^{N} := [\bar I_N^t,\bar r_t,\bar \mu^N_t], 
\end{equation}
where $[\cdot]$ denotes the equivalent class induced by the equivalence relation $\sim$, where roughly speaking two (marked) metric measure spaces are equivalent, if there is a measure preserving isometry $\varphi$ between the two metric measure spaces which also preserves the types (also called marks). 
For all interested readers, we refer to \cite{gpw_mp} (for the non marked setup) or \cite{DGP12} (for the tree-valued Moran model with mutation and selection and its large population limit; the so called tree-valued Fleming-Viot process). For all others, we summarize the results needed in this paper. 

\begin{proposition}\label{prop.gen.distance}
Assume that $(u_1(0),\ldots,u_N(0))$ are as above. Then 
\begin{equation}
\mathcal L(R^N_t) \Rightarrow \mathcal L(R_t),
\end{equation}
for all $t \ge 0$, where $R_t$ is a variable on $[0,\infty)$. Moreover, 
\begin{equation}
P(R^N_t \le x) \rightarrow P(R_t \le x), 
\end{equation} 
for all $x < t$ and $P(R^N_t \le x)  = P(R_t \le x) = 1$ for $x \ge t$. Finally, one has 
\begin{equation}
R_t \Rightarrow R_\infty,
\end{equation}
for $t \rightarrow \infty$ and all $\alpha \ge 0$ and $R_\infty$ does not depend on the initial type configuration. 
\end{proposition}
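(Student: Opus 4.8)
The plan is to deduce all three assertions from two results of \cite{DGP12} about the tree-valued model, together with the observation that the law of $R^N_t$ is a continuous image of the random marked metric measure space $\mathcal U^N_t$.

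First I would recall that \cite{DGP12} establish the convergence $\mathcal U^N := (\mathcal U^N_t)_{t\ge 0}\Rightarrow \mathcal U := (\mathcal U_t)_{t\ge 0}$ in the Skorokhod path space over the space of marked ultrametric measure spaces with the marked Gromov-weak topology, the limit $\mathcal U$ being the tree-valued Fleming--Viot process with mutation and selection. Here one checks that the initial states converge: since $r_0\equiv 0$, $\mathcal U^N_0$ is the one-point marked space carrying the empirical mark distribution of the i.i.d.\ types $\bar u_1,\dots,\bar u_N$, which converges to $\mathcal L(\bar u_1)$, so $\mathcal U^N_0\Rightarrow\mathcal U_0$. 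Since the limit path has no fixed time of discontinuity, this yields $\mathcal U^N_t\Rightarrow\mathcal U_t$ for each fixed $t$.

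Next I would introduce the \emph{distance distribution} functional: to a marked (ultra)metric measure space $\mathcal V=[X,r,\mu]$ assign $\nu(\mathcal V):=(\hat\mu\otimes\hat\mu)\circ r^{-1}\in\mathcal M_1([0,\infty))$, where $\hat\mu$ is the $X$-marginal of $\mu$. This map is continuous for the marked Gromov-weak topology (cf.\ \cite{gpw_mp}), and by construction $\mathcal L(R^N_t)=E[\nu(\mathcal U^N_t)]$. Defining $R_t$ by $\mathcal L(R_t):=E[\nu(\mathcal U_t)]$ and combining the continuity of $\nu$ with bounded convergence — for bounded continuous $f$ the map $\mathcal V\mapsto\int f\,d\nu(\mathcal V)$ is bounded and continuous — gives $\mathcal L(R^N_t)\Rightarrow\mathcal L(R_t)$, which is the first claim. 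The statement $P(R^N_t\le x)=P(R_t\le x)=1$ for $x\ge t$ holds for every $N$ (it is the content of the Remark) and passes to the limit; for $x<t$ the pointwise convergence of the distribution functions follows from weak convergence at continuity points, so the only genuine verification is that $\mathcal L(R_t)$ assigns no mass to a single point of $[0,t)$. I expect this to be the one delicate spot internal to the paper; it can be settled either directly — the finite-time genealogy encoded in $\mathcal U_t$ is a coalescent, respectively a coalescent read off the ancestral selection graph, whose pairwise merging times are continuously distributed on $(0,t)$, the only possible atom sitting at $t$ — or a posteriori from the explicit recursion for $h\mapsto P(R_t\le h)$ obtained later in the paper.

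For the last assertion I would invoke the ergodicity of the tree-valued Fleming--Viot process from \cite{DGP12}: $\mathcal U_t\Rightarrow\mathcal U_\infty$ as $t\to\infty$, with $\mathcal U_\infty$ distributed according to the unique invariant law, which in particular is independent of the initial state and hence of the initial type configuration. Applying the continuous functional $\nu$ and the same bounded-convergence argument once more gives $\mathcal L(R_t)=E[\nu(\mathcal U_t)]\to E[\nu(\mathcal U_\infty)]=:\mathcal L(R_\infty)$, i.e.\ $R_t\Rightarrow R_\infty$ for all $\alpha\ge 0$, with $R_\infty$ inheriting the independence of the initial types. The genuinely hard inputs — the Gromov-weak convergence $\mathcal U^N\Rightarrow\mathcal U$ and, above all, the ergodicity of $\mathcal U$ in the presence of selection — are imported from \cite{DGP12}; what remains is the continuity and bounded-convergence bookkeeping together with the non-atomicity check on $[0,t)$.
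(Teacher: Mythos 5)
Your overall architecture for the first and third assertions matches the paper's: both are imported from \cite{DGP12} (Theorem 3 there for the convergence $\mathcal U^N\Rightarrow\mathcal U$ of the tree-valued models, Theorem 4 there for ergodicity and independence of the initial state) and transported to $R^N_t$ and $R_t$ via the continuity of the pairwise distance distribution functional $\mathcal V\mapsto\nu^{2,\mathcal V}$ in the marked Gromov-weak topology together with bounded convergence. That is exactly the paper's proof of those two parts.

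The one place where your argument does not close is the step you yourself flag as delicate: showing that $\mathcal L(R_t)$ has no atoms on $[0,t)$, which is what upgrades weak convergence to convergence of the distribution functions at every $x<t$. Neither of your two suggestions works as stated. The a posteriori route via ``the explicit recursion for $h\mapsto P(R_t\le h)$ obtained later in the paper'' is circular: the later identification (Theorem \ref{thm.key}) is proved by passing to the limit in $P(R^N_T\le h)$, i.e.\ it already uses the convergence $P(R^N_T\le h)\to P(R_T\le h)$ that you are trying to establish here. The coalescent/ASG route is heuristically correct but requires identifying the law of the pairwise distance under the limiting tree-valued Fleming--Viot process with selection with the law of a coalescence time read off an ancestral selection graph --- a backward-in-time duality for the limit object that is a substantial import rather than bookkeeping, and precisely the kind of machinery this paper is built to avoid. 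The paper settles the point in two lines, forward in time: under neutrality the annealed pairwise distance distribution $E[\nu^{2,\mathcal U_t}([0,\cdot])]$ is exponential up to time $t$ (Remark 3.16 in \cite{DGP12}), so $\nu^{2,\mathcal U_t}(\{x\})=0$ almost surely for $x<t$; by the Girsanov transform (Theorem 2 in \cite{DGP12}) the law of $\mathcal U_t$ under selection is absolutely continuous with respect to the neutral law, hence the same null statement holds under selection and $x\mapsto P(R_t\le x)$ is continuous on $[0,t)$. Replacing your non-atomicity argument by this one makes your proof coincide with the paper's.
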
 

\begin{remark}\label{r.WF.equi}
Note that the Wright-Fisher diffusion, described in the introduction has an unique stationary distribution which is given by the following 
density
\begin{equation}
\rho(x) = \frac{x^{2\vartheta_0 - 1} (1-x)^{2\vartheta_1 - 1} \exp(2 \alpha x)}{\int_0^1 x^{2\vartheta_0 - 1} (1-x)^{2\vartheta_1 - 1} \exp(2 \alpha x) dx}
\end{equation}
with respect to the Lebesgue measure (see Section 10.2 in \cite{EK86}). Applying Theorem 4 in \cite{DGP12} we can therefore interpret $R_\infty$ as the genealogical distance of this equilibrium Wright-Fisher population.
\end{remark}

\begin{proof}
The first part is Theorem 3 in \cite{DGP12} combined with the fact that $P(R^N_t \in \cdot) = E[\nu^{2,\mathcal U_t^{N}}(\cdot)]$, where 
\begin{equation}\label{eq.pairwise}
\begin{split}
\nu^{2,[X,r,\mu]}([0,h]) &:= \nu^{[X,r,\mu]}([0,h] \times \mathbb R^{\binom{\mathbb N}{2}} \times \mathbb K^{\mathbb N}) \\
&=\mu \otimes \mu(\{(x,u),(y,v):\ r(x,y) \le h\}),
\end{split}
\end{equation} 
is the distance matrix distribution (see Definition 3.4 and Definition 3.6 in \cite{DGP12} and recall that the 
map $\mu \mapsto \int f d\mu$ is continuous for all bounded continuous $f$). \par
The second part follows, since $x \mapsto E[\nu^{2,\mathcal U_t}([0,x])]$ is continuous (where $\mathcal U_t$ denotes the tree-valued Fleming-Viot process with mutation and selection). This follows by the fact, that under neutrality $E[\nu^{2,\mathcal U_t}([0,x])]$
is exponentially distributed (see Remark 3.16 in \cite{DGP12}) combined with the Girsanov-transform (see Theorem 2 in \cite{DGP12}) and the continuous mapping Theorem (see Theorem 8.4.1 in \cite{Bogachev}, Vol. II).
The last part is Theorem 4 in \cite{DGP12} combined with the above observation and Remark \ref{r.WF.equi}.
\end{proof}

\section{Results}

Here we give the main result of this paper and some further results and discussions that are related. 

\subsection{The main result}\label{sec.main.res}

Recall Proposition \ref{prop.gen.distance} and write $R^\alpha$ in order to indicate the dependence on the selection parameter $\alpha > 0$.  

\begin{theorem}\label{thm.main}
There is a coupling such that
\begin{equation}
P(R^\alpha_t \le R^0_t) = 1, \qquad \text{for all } t \ge 0 \text{ and } \alpha \ge 0. 
\end{equation}
Moreover, in the case where $\vartheta_1 = \vartheta_2 =:\vartheta$ one has 
\begin{equation}
\mathcal L(R^\alpha_\infty) \Rightarrow \mathcal L(R^0_\infty),\qquad \text{for } \alpha \rightarrow \infty.
\end{equation}
\end{theorem}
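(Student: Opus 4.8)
The plan is to prove the two statements by quite different means. For the stochastic-dominance part, I would work with the graphical construction of Section~\ref{sec.construction} directly and build a coupling of the Moran model with selection parameter $\alpha$ and the neutral Moran model ($\alpha = 0$) on a common probability space. The natural idea is to use the \emph{same} resampling Poisson processes $\eta^{i,j}_{\mathrm{res}}$ and the \emph{same} mutation processes for both models, while the selective arrows $\eta^{i,j}_{\mathrm{sel}}$ are additional arrows present only in the selection model. The key observation is that an extra selective arrow from $(i,t)$ to $(j,t)$ can only \emph{merge} ancestral lineages sooner (or leave them unchanged): when we trace back the ancestor of some present-day individual, encountering a usable selective arrow replaces one lineage's past by another lineage's past, and since at time $0$ all individuals are related ($r_0 \equiv 0$), any such replacement can only decrease — never increase — the time back to the most recent common ancestor of two sampled individuals. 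Concretely, I would show by induction over the (a.s.\ finite) set of Poisson points in $[0,t]$ that for the coupled realizations, for every pair $i,j$ one has $r_t^\alpha(i,j) \le r_t^0(i,j)$, where $r_t^\alpha$ uses resampling $+$ selective arrows and $r_t^0$ uses resampling arrows only. Averaging over $i,j$ gives $R_t^{N,\alpha} \le R_t^{N,0}$ pathwise, hence stochastic dominance at the $N$-level; passing to the limit via Proposition~\ref{prop.gen.distance} (convergence in distribution of $R^{N,\alpha}_t$, together with the fact that stochastic order is preserved under weak limits) yields $P(R^\alpha_t \le R^0_t) = 1$ under a suitable coupling of the limits. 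The main obstacle here is the bookkeeping in the graphical construction: one must be careful that a selective arrow is only "usable" when its source carries type $1$, so the coupling of the type processes must also be handled — I would couple types by giving both models the same mutation and resampling events and noting that the selection model's type configuration is obtained from the neutral one by extra selective replacements, then check the monotonicity claim survives this interaction. Establishing the right monotone functional of the arrow configuration, and verifying it is genuinely monotone in the presence of selective arrows, is the technical heart of this half.

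For the large-$\alpha$ limit in equilibrium (with $\vartheta_0 = \vartheta_1 = \vartheta$), I would argue at the level of the limiting tree-valued Fleming--Viot process and its stationary law. By Remark~\ref{r.WF.equi}, $R^\alpha_\infty$ is the pairwise genealogical distance of the equilibrium Wright--Fisher population whose type-frequency has density $\rho_\alpha(x) \propto x^{2\vartheta - 1}(1-x)^{2\vartheta-1} e^{2\alpha x}$ on $[0,1]$. As $\alpha \to \infty$ this density concentrates at $x = 1$: the equilibrium population becomes purely fit. The intuition (attributed to \cite{DGP12}) is that once everyone is fit, selective arrows confer no advantage and the genealogy is exactly the neutral one. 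To make this rigorous I would combine two ingredients: (i) the already-proved stochastic dominance $R^\alpha_\infty \le R^0_\infty$, which gives an upper bound, so it suffices to prove $R^\alpha_\infty$ is asymptotically \emph{not smaller} than $R^0_\infty$; and (ii) a Laplace-transform or distribution-function estimate showing $P(R^\alpha_\infty \le h) \to P(R^0_\infty \le h) = 1 - e^{-h}$ for $h < \infty$. For (ii) the cleanest route is via the recurrence relations for moments/Laplace transforms of the pairwise distance under the tree-valued dynamics in equilibrium (the generator calculations of \cite{DGP12}): these express $E[e^{-\lambda R^\alpha_\infty}]$ through quantities that depend on $\alpha$ only via expectations against $\rho_\alpha$ of polynomials in the type-frequency $x$, e.g.\ $E_{\rho_\alpha}[x]$, $E_{\rho_\alpha}[x(1-x)]$, etc. Since $\rho_\alpha \Rightarrow \delta_1$ and all these polynomial moments converge to their values at $x = 1$ (in particular $E_{\rho_\alpha}[x(1-x)] \to 0$, which kills the selection term), the equilibrium recurrence for $R^\alpha_\infty$ converges to the one characterizing $R^0_\infty$. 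I would then invoke continuity/uniqueness of the solution of that recurrence (the neutral case gives the exponential law, Remark~3.16 in \cite{DGP12}) to conclude $\mathcal L(R^\alpha_\infty) \Rightarrow \mathcal L(R^0_\infty)$.

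The delicate point in the second half is controlling the selection term uniformly enough to pass to the limit in the recurrence — one needs that the contributions involving $x(1-x)$ (the "selection sees type diversity" terms) vanish, and that the remaining mutation/resampling terms converge to their neutral counterparts, without the recurrence being ill-conditioned as $\alpha \to \infty$. Equivalently, one may phrase it as: the family $\{\mathcal L(R^\alpha_\infty)\}_{\alpha}$ is tight (immediate from $R^\alpha_\infty \le R^0_\infty$ and $R^0_\infty \le$ an exponential-type variable, or from $R^\alpha_\infty \le \cdot$ being supported on $[0,\infty)$ with controlled tails), so every subsequential weak limit exists; then identify any such limit as $R^0_\infty$ by checking it satisfies the neutral equilibrium recurrence, using $\rho_\alpha \Rightarrow \delta_1$. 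I expect this identification step — matching the limiting equilibrium moment recurrence with the neutral one and invoking its unique solvability — to be the main obstacle, since it requires the moment/Laplace recurrences of \cite{DGP12} in a form where the $\alpha$-dependence is transparently through $\rho_\alpha$-expectations and where uniqueness of the solution is available.
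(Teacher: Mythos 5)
There is a genuine gap in the first half. The pathwise monotonicity you rely on --- ``an extra selective arrow can only merge ancestral lineages sooner'' --- is false. A usable selective arrow from $i$ to $j$ at calendar time $t-h_1$ does not merely add a coalescence opportunity: it \emph{reroutes} the entire past of $j$'s lineage through $i$. If under the neutral arrow configuration the lineages of two sampled individuals $a,b$ coalesce at backward time $h_0$, and a selective arrow hits $a$'s lineage at backward time $h_1<h_0$ and reroutes it to an individual whose lineage does not meet $b$'s until backward time $h_2>h_0$, then $r^\alpha_t(a,b)=h_2>h_0=r^0_t(a,b)$. So under the natural coupling (shared resampling and mutation points, extra selective arrows) the genealogical distance can strictly \emph{increase} for particular pairs; the assumption $r_0\equiv 0$ only caps the distance at $t$, it does not restore monotonicity. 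This is exactly the obstruction that makes the theorem nontrivial (and why earlier work only obtained perturbative Laplace-order statements). The paper does not construct a graphical coupling at all: it proves the inequality of distribution functions $P(R^\alpha_T\le h)\ge 1-e^{-h}=P(R^0_T\le h)$ by a forward-in-time argument --- representing $P(R_T\le h)$ as $\lim_n E[\phi_1(X^{n,T-h}_h)]$ for a $2n$-dimensional Wright--Fisher diffusion of family sizes split by type (Theorem \ref{thm.key}), and then showing via generator identities and a stopping-time argument that the cross term $E[\phi_2(X^n_t)]$ stays nonnegative, so the selection contribution to $\frac{d}{dt}E[\phi_1]$ is nonnegative. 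The coupling asserted in the theorem is then obtained abstractly from Strassen's theorem, not from the Poissonian construction. Your averaged statement $R^{N,\alpha}_t\le R^{N,0}_t$ ``pathwise'' is therefore not something your coupling delivers, and the induction over Poisson points you propose would break at the first rerouting event.

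The second half is closer in spirit to the paper (both use the dominance from part one for the lower bound $P(R^\alpha_\infty\le h)\ge 1-e^{-h}$ and control the selection contribution through equilibrium moments of $\rho_\alpha$), but the step ``$E_{\rho_\alpha}[x(1-x)]\to 0$ kills the selection term'' is not sufficient: every selection term in the relevant generator or recurrence carries an explicit factor $\alpha$, so you need a \emph{rate}, and in fact $\alpha\,E_{\rho_\alpha}[1-x]\to\vartheta\neq 0$, so first-moment decay does not suffice. What the paper actually needs and proves (Lemma \ref{l.WF.moments}) is $\alpha\,E_{\rho_\alpha}[(1-x)^2]\to 0$, after bounding $\phi_2\le 1-\bar Y$ and the $\phi_3$-contribution by $2(1-\bar Y)^2$ and comparing $E[\phi_1]$ with an explicit ODE system $(f_\alpha,g_\alpha)$; the restriction $\vartheta_0=\vartheta_1$ in the statement exists precisely because this second-moment estimate is only verified there. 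Your tightness and subsequence-identification scheme could be made to work, but the missing quantitative ingredient is this $O(1/\alpha)$ control of $E_{\rho_\alpha}[(1-x)^2]$, not the qualitative concentration $\rho_\alpha\Rightarrow\delta_1$.
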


\begin{remark}
The restriction in the second result seems absolutely unnecessary. 
But, in the general case, it is much harder to show convergence of the moments of the one dimensional Wright-Fisher diffusion in equilibrium (compare Remark \ref{r.gen.mut}). 
\end{remark}

\subsection{Further observations}

In this section we remark some further results, that can be obtained using the methods presented in the proof of the main theorem. \\

\noindent {\bf (1)} Even though the result is given for the two type model, the method used for the proof can be generalized to an arbitrary number of types
by specifying suitable selection and mutation operators and we claim that 
\begin{itemize}
\item[] {\it The result in Theorem \ref{thm.main} stays valid in the model with  any finite number of types. }
\end{itemize}

\noindent {\bf (2)} The following result generalizes the observation in \cite{DGP12}.

\begin{proposition}\label{t.strong}
 Let $\vartheta_0 = \vartheta_1 =  \frac{1}{2}$ and  $h \ge 0$. Then
\begin{align}
P(R^\alpha_\infty\le h) = P(R^0_\infty\le h)+ \alpha^2 \cdot \tau(h) + o(\alpha^2), \quad \text{for} \quad \alpha \rightarrow 0,
\end{align}
with 
\begin{equation}
\tau(h) = -\frac{1}{735}e^{-8h} +\frac{1}{42}e^{-h}h +\frac{1}{60} e^{-3h}-\frac{3}{196} e^{-h}.
\end{equation}
\end{proposition}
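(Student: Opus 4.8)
The plan is to work in equilibrium with the tree-valued Fleming--Viot process $\mathcal{U}^\alpha = (\mathcal{U}^\alpha_t)_{t\ge0}$ with mutation and selection from \cite{DGP12}, using the identity $P(R^\alpha_\infty\le h)=E\big[\nu^{2,\mathcal{U}^\alpha_\infty}([0,h])\big]$ established in the proof of Proposition~\ref{prop.gen.distance}, together with Remark~\ref{r.WF.equi}. Write $\Omega^\alpha=\Omega^{\mathrm{grow}}+\Omega^{\mathrm{res}}+\Omega^{\mathrm{mut}}+\alpha\,\Omega^{\mathrm{sel}}$ for its generator, acting on the polynomials $\Phi^{n,\phi}$ that sample $n$ individuals and evaluate a bounded $\phi$ of their pairwise distances and marks; recall that $\Omega^{\mathrm{sel}}$ raises the sample size $n$ by one, whereas the other three parts preserve it, and that $\Omega^{\mathrm{sel}}$ is the only $\alpha$-dependent piece. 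Since $\mathcal{U}^\alpha_\infty$ is stationary, $E\big[\Omega^\alpha\Phi^{n,\phi}(\mathcal{U}^\alpha_\infty)\big]=0$ for all such $\Phi^{n,\phi}$; this relation, together with explicit computations in the neutral Kingman coalescent with parent-independent mutation (which is what $\vartheta_0=\vartheta_1=\tfrac12$ amounts to, the per-lineage mutation being a resampling of the mark to a fresh uniform value at rate $1$), is essentially all that is needed.

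First I would, for fixed $h\ge0$, introduce a finite family of equilibrium functionals $f^\alpha_{n,\sigma}(h)=E\big[\mathbbm{1}\{r(x_1,x_2)\le h\}\,g_\sigma(u_1,\dots,u_n;\mathrm{topology})\big]$, $n\in\{2,3,4\}$, where $\sigma$ ranges over the finitely many mark configurations and coalescence patterns of $x_1,\dots,x_n$ that can be produced from the pair $(x_1,x_2)$ by mutations and by one or two selective events, and $f^\alpha_{2,\mathrm{triv}}=w^\alpha:=P(R^\alpha_\infty\le\cdot)$. Applying the stationarity relation to $\Phi^{n,\phi}$ with $\phi=\mathbbm{1}\{r_{12}\le\cdot\}\cdot(\text{mark/topology functional})$ and reading off the four parts of $\Omega^\alpha$ -- $\Omega^{\mathrm{grow}}$ produces $\tfrac{d}{dh}$, $\Omega^{\mathrm{res}}$ produces linear terms that either reset $r_{12}$ to $0$ (contributing a stationary moment of $\bar Y_\infty$ minus $f^\alpha_{n,\sigma}(h)$) or merge two of the sampled individuals, $\Omega^{\mathrm{mut}}$ permutes the mark configurations at rate $1$ per lineage, and $\alpha\,\Omega^{\mathrm{sel}}$ couples the $n$-individual functionals to $(n+1)$-individual ones -- yields a closed linear ODE system in $h$, $\tfrac{d}{dh}\mathbf{f}^\alpha(h)=M(\alpha)\mathbf{f}^\alpha(h)+\mathbf{c}(\alpha)$, with constant term $\mathbf{c}(\alpha)$ built from moments of $\bar Y_\infty$ (computable from the density $\rho$, which for $\vartheta_0=\vartheta_1=\tfrac12$ is simply $\rho(x)=e^{2\alpha x}/\!\int_0^1 e^{2\alpha y}\,dy$) and boundary condition $\mathbf{f}^\alpha(0)=0$ (since $E[\mathbbm{1}\{r_{12}=0\}]=0$).

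Next I would expand everything in powers of $\alpha$, $\mathbf{f}^\alpha=\mathbf{f}^{(0)}+\alpha\mathbf{f}^{(1)}+\alpha^2\mathbf{f}^{(2)}+O(\alpha^3)$ -- analyticity in $\alpha$ of the equilibrium quantities following from the Girsanov-type representation of the selective equilibrium in terms of the neutral one (Theorem~2 in \cite{DGP12}) together with a routine perturbation argument, the remainder being uniform in $h$ because all $f^\alpha_{n,\sigma}$ are bounded by $1$ and constant in $h$ for $h$ large. Collecting coefficients, the system is triangular: at order $\alpha^0$ one recovers the neutral coalescent-with-mutation values, in particular $w^{(0)}(h)=P(R^0_\infty\le h)=1-e^{-h}$; at order $\alpha^1$ the system forces $w^{(1)}\equiv0$, recovering the vanishing of the linear term of \cite{Krone},\cite{neuhauser1997genealogy}; and at order $\alpha^2$ one is left, after solving up the chain (neutral four-individual functionals $\to$ order-$\alpha$ three-individual functionals $\to$ order-$\alpha^2$ two-individual functional), with a first-order linear ODE $\tfrac{d}{dh}w^{(2)}=-w^{(2)}+F(h)$, $w^{(2)}(0)=0$, whose inhomogeneity $F$ is an explicit combination of $e^{-h},e^{-3h},e^{-8h}$ and polynomials coming from the lower levels, and whose solution is exactly $w^{(2)}(h)=\tau(h)$ as stated. (As a consistency check one has $\tau(0)=0$, in accordance with $P(R^\alpha_\infty=0)=0$ for every $\alpha$, and $\tau(h)\to0$ as $h\to\infty$.)

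The main obstacle is the bookkeeping in the second step: pinning down the correct finite set of mark/coalescence configurations on up to four sampled individuals and writing down the action of $\Omega^{\mathrm{sel}}$ on them without error -- because a selective event introduces an extra individual whose mark must be the fit type $1$, the combinatorics of exactly which functionals appear, and with which signs, is the delicate point -- together with verifying that the hierarchy genuinely closes at four individuals at order $\alpha^2$, which it does since each selective event raises the sample size by exactly one and the four-individual contribution at order $\alpha^2$ sits at the explicitly known neutral equilibrium. Once the ODE system and its $\alpha$-expansion are in place the extraction of $\tau$ is lengthy but mechanical; alternatively, the same $\tau$ can be recovered by Laplace-inverting the second-order expansion of $E[e^{-\lambda R^\alpha_\infty}]$ that underlies the computations in \cite{DGP12}.
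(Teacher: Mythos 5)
Your strategy is sound and would produce the stated $\tau$, but it is a genuinely different route from the one the paper has in mind. The paper offers no written proof of this proposition at all: it is presented as a ``further observation'' obtainable ``using the methods presented in the proof of the main theorem,'' i.e.\ the \emph{forward-in-time} family-size diffusion $X^n$ of Section \ref{sec.formal}, where one extends the moment hierarchy $L_\alpha\phi_1=1-\phi_1+2\alpha\phi_2$, $L_\alpha\phi_2=-(3+\vartheta_0+\vartheta_1-\alpha)\phi_2+\alpha\phi_3$ by the generator action on $\phi_3$ (and one further level), feeds in the equilibrium moments of $\bar Y_\infty$ under $\rho(x)\propto e^{2\alpha x}$, expands the resulting linear ODE system in $h$ to order $\alpha^2$, and lets $n\to\infty$ via Theorem \ref{thm.key}; the answer is then cross-checked against the Laplace transform of Theorem 5 in \cite{DGP12}. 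You instead run the \emph{backward}/stationarity calculation on the tree-valued Fleming--Viot equilibrium in the style of \cite{DGP12} itself: $E[\Omega^\alpha\Phi(\mathcal U^\alpha_\infty)]=0$ on sample-size-$n$ polynomials, with the selection part raising $n$ by one so that the hierarchy closes at $n=4$ at order $\alpha^2$, giving a triangular ODE system in $h$ for the CDF functionals. Both are legitimate; yours has the advantage of working directly in equilibrium (no $n\to\infty$ or $T\to\infty$ limits to interchange) and of making the vanishing of the linear term transparent, while the paper's route reuses machinery already built for Theorem \ref{thm.main} and keeps the state space finite-dimensional. Two small points to tighten: your fallback of Laplace-inverting the second-order expansion of $E[e^{-2\lambda R^\alpha_\infty}]$ from \cite{DGP12} does not immediately yield a pointwise-in-$h$ expansion of the CDF with an $o(\alpha^2)$ error --- you need the analyticity in $\alpha$ (via the Girsanov transform) that you invoke anyway, so the direct ODE route is the one to rely on; and the functionals $f^\alpha_{n,\sigma}$ are not literally constant in $h$ for large $h$, though boundedness plus Gr\"onwall on the linear system suffices for the remainder control. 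Your consistency checks ($\tau(0)=0$, the pole structure $\{1,1,3,8\}$ matching the factors $(1+2\lambda)^2(3+2\lambda)(4+\lambda)$) are exactly the right sanity tests and both pass.
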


\begin{remark}
(a) We note that by differentiation, one gets the following asymptotic density for $\alpha \rightarrow 0$: 

\begin{equation}
\begin{split}
P(R^\alpha_\infty \in dh) = &\left(e^{-h} + \left(\frac{8}{735}e^{-8h} -\frac{1}{42}e^{-h}h+ \frac{1}{42}e^{-h} \right.\right.\\
&{}\hspace{1cm}\left.\left.  -\frac{1}{20} e^{-3h}+\frac{3}{196} e^{-h} \right)\alpha^2 + o(\alpha^2)\right) 1(h \ge 0) dh.
\end{split}
\end{equation}

If we now calculate the Laplace transform: 
\begin{equation}
E[e^{-2 \lambda R^\alpha_\infty}] = \frac{1}{1+2\lambda} + \frac{1}{3}\cdot\frac{\lambda}{(4+\lambda)(3+2\lambda)(1+2\lambda)^2} \alpha^2 + o(\alpha^2),
\end{equation}
then this is exactly the Laplace transform given in Theorem 5 of \cite{DGP12} (where one has to choose $\gamma = 1$, $\vartheta_{\textcolor{gray}{\bullet}} = \vartheta_{\bullet} = 1$).\par 
(b) Since we defined the genealogical distance to be the time to the most recent common ancestor, while in \cite{DGP12} they defined the 
distance to be two times the distance to the most recent common ancestor, we need to multiply with two in the above transform. \par
(c) It is not necessary to choose   $\vartheta_0 = \vartheta_1 = \frac{1}{2}$, but the calculations for general $\vartheta_0,\vartheta_1$ are a bit harder. 
\end{remark}

\begin{center}
\begin{figure}[ht]
\includegraphics[scale =0.42]{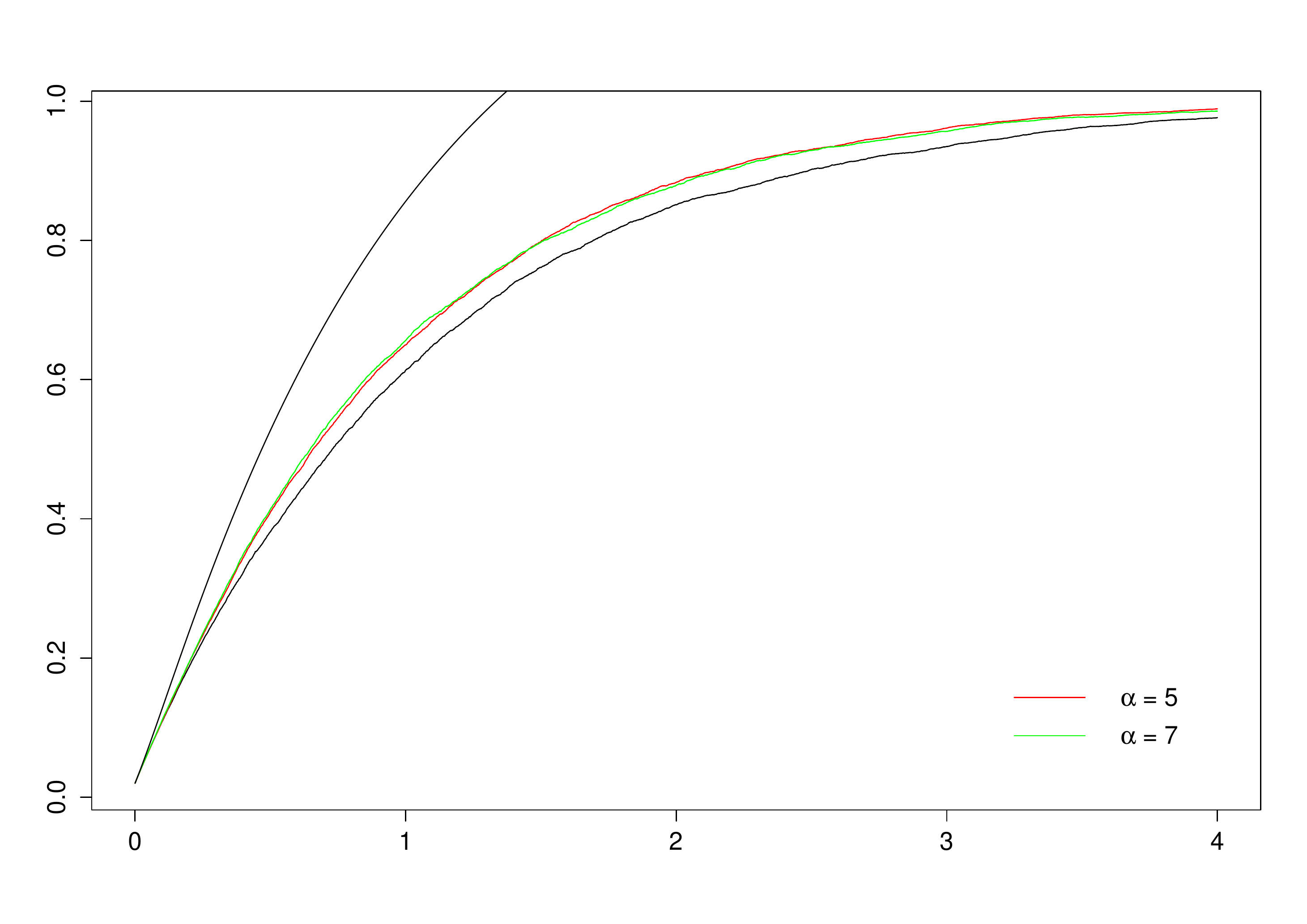} 
\vspace{-1cm} 
\caption{\footnotesize Comparison of the cumulative distribution functions (CDF) of the genealogical distance for different selection parameters, where $\vartheta_0 = \vartheta_1 = 1/2$; we assumed that the type process is in equilibrium. The lower black curve is the CDF of the distance under neutrality, i.e. the exponential distribution. The upper black curve is an upper bound for the genealogical distance given in the proof section.  The plot was created with {\normalfont{R}}.}
\label{fig:Comparison}
\end{figure}
\end{center}

\noindent {\bf (3)} One may ask if there is some sort of monotonicity in the parameter $\alpha$. Figure \ref{fig:Comparison} shows that we can not expect a monotonicity on the level of cumulative distributions functions (and therefore on the level of stochastic dominance) in general. 
The heuristic reason why this can not be true is the following: Assume initially there is a fixed (the same for all selection parameters)
positive fraction, say $\eps > 0$, of unfit types in the population. Then, when $\alpha \rightarrow \infty$, the unfit types will 
immediately be replaced by fit types, which implies that there is a positive fraction of individuals with genealogical distance $0$. Moreover, 
when there are only fit types left in the population, then there is no selective advantage and therefore the genealogical distance 
will look like the neutral distance. That means that $\mathcal L(R) \approx \eps \delta_0 + (1-\eps) e^{-t}dt$. But, since the initial number of unfit types tends to zero, when $\alpha \rightarrow \infty$, we are in the situation of Theorem \ref{thm.main}. \par
In other words, we have two effects: The larger $\alpha$ is the quicker the population can reduce distance of a small fraction of individuals with the price that 
after small  time the distance looks like the exponential distribution. On the other hand when $\alpha$ is small 
the system needs more time to reduce the distances but has a larger fraction of individuals where this reduction is possible. 
Hence, the two graphs might intersect each other, which is exactly what we see in Figure  \ref{fig:Comparison} (the two curves intersect at time 1.4). 

\begin{figure}[ht!]
\begin{center}
\includegraphics[scale = 0.42]{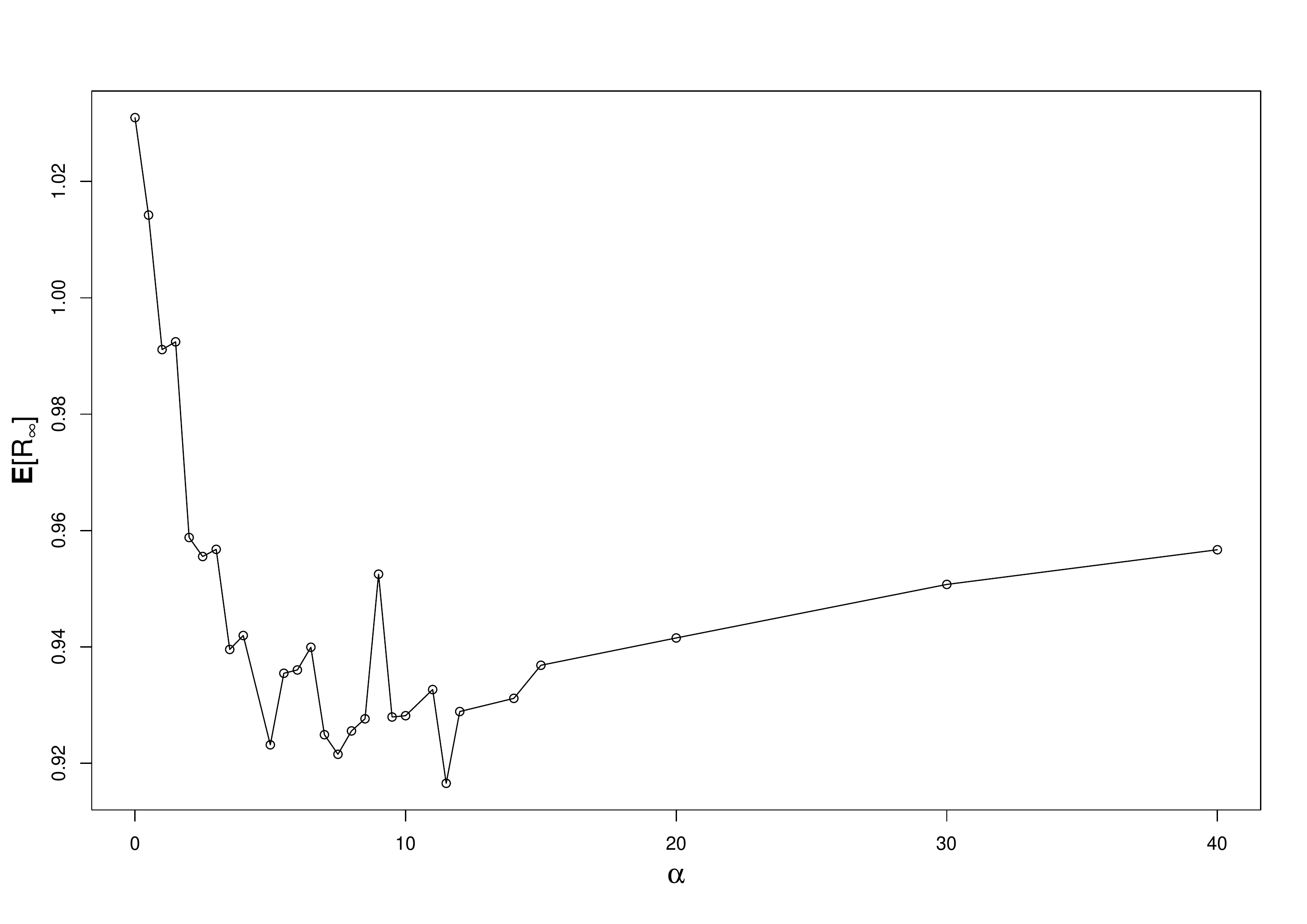} 
\end{center}
\vspace{-1cm} 
\caption{\footnotesize  The expected value of $R$ for different selection parameters.}
\label{fig:Expectation}
\end{figure}

Figure \ref{fig:Expectation} shows that we can not even expect monotonicity for the expectations.

\begin{remark}
As one can see in Figure \ref{fig:Expectation}, our simulations underestimate the real value on the level of CDFs and therefore overestimate the real value of the expectation. Nevertheless these plots give a hint, that things are more complicated than one would 
expect. 
\end{remark}

\section{The key observation}

Here we present the basic idea and tool for the poof of our result. Roughly speaking, instead of considering a backward in time dynamic,
which is in some sense the classical approach, we will use a forward in time representation, which is closely connected to the idea 
of evolving genealogies in the sense of tree-valued processes. This new approach has the advantage that we will get an approximation 
of the genealogical distance by solutions of an SDE. To be more precise, we can approximate the genealogical distance  by the sum 
of the squares of the coordinates of an $n$-dimensional Wright-Fisher diffusion with a certain selection and mutation operator and a certain initial condition, when we let $n $ go to infinity. \par 
In Section \ref{sec.idea} we present the basic idea, which we turn into a theorem in Section \ref{sec.formal}.

\subsection{The idea}\label{sec.idea}

Recall that 
\begin{equation}
P(R^N_t\le h) := \frac{1}{N^2} \sum_{i,j = 1}^N P(r_t(i,j) \le h)
\end{equation} 
and that $r_t(i,j) \le h$ if and only if, individual $i$ and $j$ in the time $t$ population had a common ancestor before 
time $h$ (measured backwards). Or, in other words, in the time $t-h$ population there was an individual that gave birth 
to both $i$ and $j$. Hence,  two individuals in a population at time $t$ are related at some time $h$ (measured backwards) if they are in the same family spanned by some common ancestor at time $t-h$. If we now ask for the probability that two randomly chosen individuals 
are related at time $h$ (measured backwards), then this is same as to ask for the probability that we sample two individuals from the same family. Therefore, 
if we want to calculate this probability, we only need to know how big the different families are. To do so, we start a process at time $t-h$ 
with  $N$ individuals and each individual gets a label (or "type") $\kappa_1,\ldots,\kappa_N$, where we interpret the mass of type $\kappa_i$ as the family size spanned by the individual $i$. That is, for example, at time $0$ (which corresponds to the time $t-h$) all individuals are in their 
own family, i.e. the relative frequency, which we denote by $\mathfrak{f}_1,\ldots,\mathfrak{f}_N$, of the family sizes is $1/N,\ldots,1/N$. Now, when time evolves, a family, say with label $\kappa_i$, 
gets a new member due to resampling, say one member of the family labeled by $\kappa_j$ (or in other words one member of the family $\kappa_j$ dies and gets replaced by a member of $\kappa_i$), i.e.  $\mathfrak{f}_i \to \mathfrak{f}_i +1/N$, $\mathfrak{f}_j \to \mathfrak{f}_j - 1/N$ (see Figure \ref{fig.1}).

\begin{figure}[ht!]
\begin{center}
\begin{tikzpicture}[scale = 0.55]
\draw [line width=0.8pt] (1,4)-- (1,-4);
\draw [line width=0.8pt] (3,4)-- (3,-4);
\draw [line width=0.8pt] (5,4)-- (5,-4);
\draw [line width=0.8pt] (7,4)-- (7,-4);
\draw [->,line width=0.8pt] (-2,-5) -- (-2,5);
\draw (1,-4.5) node {$\mathbf{\kappa_1}$};
\draw (3,-4.5) node {$\mathbf{\kappa_2}$};
\draw (5,-4.5) node {$\mathbf{\kappa_3}$};
\draw (7,-4.5) node {$\mathbf{\kappa_4}$};
\draw [->,line width=0.8pt] (5,1) -- (7,1);
\draw (-1.5,-2.5) node {$\mathbf{t_1}$};
\draw (-1.5,-0.5) node {$\mathbf{t_2}$};
\draw (-1.5,1.5) node {$\mathbf{t_3}$};
\draw (-3,5) node {$\mathbf{time}$};
\draw [->,line width=0.8pt] (3,-1) -- (1,-1);
\draw [->,line width=0.8pt] (3,-3) -- (5,-3);
\draw (5.5,-2.5) node {$\mathbf{\kappa_2}$};
\draw (0.5,-0.5) node {$\mathbf{\kappa_2}$};
\draw (7.5,1.5) node {$\mathbf{\kappa_2}$};
\draw (1,4.5) node {$\mathbf{\kappa_2}$};
\draw (3,4.5) node {$\mathbf{\kappa_2}$};
\draw (5,4.5) node {$\mathbf{\kappa_2}$};
\draw (7,4.5) node {$\mathbf{\kappa_2}$};
\begin{scriptsize}
\draw [color=black] (-2,1)-- ++(-2.5pt,-2.5pt) -- ++(5.0pt,5.0pt) ++(-5.0pt,0) -- ++(5.0pt,-5.0pt);
\draw [color=black] (-2,-1)-- ++(-2.5pt,-2.5pt) -- ++(5.0pt,5.0pt) ++(-5.0pt,0) -- ++(5.0pt,-5.0pt);
\draw [color=black] (-2,-3)-- ++(-2.5pt,-2.5pt) -- ++(5.0pt,5.0pt) ++(-5.0pt,0) -- ++(5.0pt,-5.0pt);
\end{scriptsize}
\end{tikzpicture}
\caption{\label{fig.1} \footnotesize Evolution of the families under neutrality; At time $0$ each individual represents its own family. At time 
$t_1$ the individual represented by $\kappa_3$ is replaced by an offspring of the  individual labeled by $\kappa_2$ and hence the family size represented by the individual with label $\kappa_2$ changes from 
$1/4$ to $1/2$ and the family size of the family represented by the individual with label $\kappa_3$ changes from $1/4$ to $0$.}
\end{center}
\end{figure}
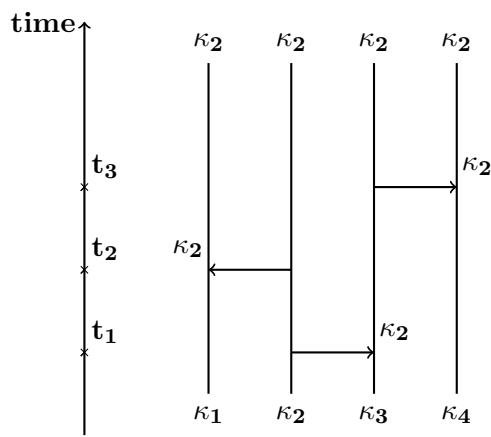

Note that this dynamic is the dynamic of an $N$-type Moran model with population size $N$ and the probability of sampling two individuals from 
the same family with frequency $\mathfrak{f}_i$ is (for large $N$)  given by $\mathfrak{f}_i^2$, or, to be more precise,  
\begin{equation}
P(R^N_t\le h) = \frac{1}{N^2} \sum_{i,j = 1}^N P(r_t(i,j) \le h) \approx \sum_{i = 1}^N E[\mathfrak{f}_i(h)^2]. 
\end{equation}

This is the situation under neutrality. When we now add mutation and selection, then we can use the same idea, but in contrast to the neutral model, the fitness of a family depends on the number of fit individuals within this family and mutation will change the type of an individual but not the family the individual belongs to. Roughly speaking we have external "types" with frequencies $\mathfrak{f}_1,\ldots,\mathfrak{f}_N$, which give the 
sizes of the different families and internal types with frequencies $y_1,\ldots,y_N$, which give the fraction of fit (i.e. of type $1$) types in this families (see Figure \ref{fig.2}).  

\begin{figure}[ht!]
\begin{center}
\begin{tikzpicture}[scale = 0.95]
\fill[line width=0.8pt,color=gray,fill=gray,fill opacity=0.1] (4,10) -- (4,8.8) -- (6,8.8) -- (6,10) -- cycle;
\fill[line width=0.8pt,color=gray,fill=gray,fill opacity=0.1] (9,10) -- (9,7.4) -- (11,7.4) -- (11,10) -- cycle;
\fill[line width=0.8pt,color=gray,fill=gray,fill opacity=0.1] (14,10) -- (14,8.2) -- (16,8.2) -- (16,10) -- cycle;
\draw [line width=0.8pt] (4,10)-- (4,7);
\draw [line width=0.8pt] (4,7)-- (6,7);
\draw [line width=0.8pt] (6,7)-- (6,10);
\draw [line width=0.8pt] (6,10)-- (4,10);
\draw [line width=0.8pt] (9,10)-- (9,7);
\draw [line width=0.8pt] (9,7)-- (11,7);
\draw [line width=0.8pt] (11,7)-- (11,10);
\draw [line width=0.8pt] (11,10)-- (9,10);
\draw [line width=0.8pt] (14,10)-- (14,7);
\draw [line width=0.8pt] (14,7)-- (16,7);
\draw [line width=0.8pt] (16,7)-- (16,10);
\draw [line width=0.8pt] (16,10)-- (14,10);
\draw [line width=0.8pt] (4,8.8)-- (6,8.8);
\draw [line width=0.8pt] (9,7.4)-- (11.,7.4);
\draw [line width=0.8pt] (14,8.2)-- (16,8.2);
\draw (5,6) node  {$\mathfrak{f}_1$};
\draw (10,6) node  {$\mathfrak{f}_2$};
\draw (15,6) node  {$\mathfrak{f}_3$};
\draw [->,line width=0.8pt] (4.7,9.1) -- (4.7,7.8);
\draw [->,line width=0.8pt] (5.3,7.8) -- (5.3,9.1);
\draw [->,line width=0.8pt] (6,9) -- (9,9);
\draw [->,line width=0.8pt] (9,8) -- (6,8);
\draw [->,line width=0.8pt,dash pattern=on 2pt off 2pt] (10.6,7.2) -- (14.4,8.8);
\draw [->,line width=0.8pt,dash pattern=on 2pt off 2pt] (10.6,9.4) -- (14.4,7.6);
\draw [->,line width=0.8pt,dash pattern=on 2pt off 2pt] (10.6,9.4) -- (14.4,9.4);
\draw [->,line width=0.8pt,dash pattern=on 2pt off 2pt] (10.6,7.2) -- (14.4,7.2);
\draw [line width=0.8pt] (12.25,8.15) -- (12.45,7.8);
\draw [line width=0.8pt] (12.35,8.2) -- (12.55,7.85);
\draw [line width=0.8pt] (12.45,7.4) -- (12.45,7);
\draw [line width=0.8pt] (12.55,7.4) -- (12.55,7);
\end{tikzpicture}
\end{center}
\caption{\label{fig.2}\footnotesize Here we have three families with relatively sizes $\mathfrak{f}_1,\mathfrak{f}_2,\mathfrak{f}_3$. The gray box indicates the fraction 
of fit types, which we denote by $y_1,y_2,y_3$. Mutation (as in the $\mathfrak{f}_1$ box) can only change the type within a family. 
Selection (dashed arrows from the $\mathfrak{f}_2$ box to the $\mathfrak{f}_3$ box) can only occur from a fit type. Resampling (black arrows between the 
$\mathfrak{f}_1$ and $\mathfrak{f}_2$ box) occurs independently of the internal type.}
\end{figure}
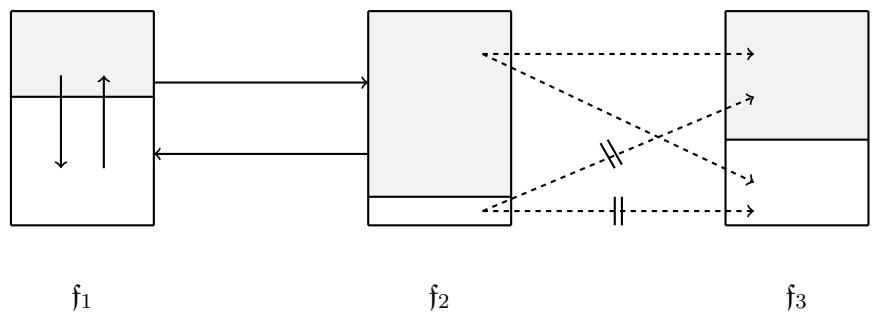

Instead of considering the external type frequency $\mathfrak{f}_i$ and the internal type frequency $y_i$ we will in the following consider the two internal type frequencies 
$y_i$ and $z_i$, where $z_i$ is the fraction of individuals with the unfit type in family $\kappa_i$, i.e. $z_i = \mathfrak{f}_i - y_i$. In the next section we give the formal statement of the above observations, where we identify the vector $(y_1,\ldots,y_N,z_1,\ldots,z_N)$ with a vector $(x_1,\ldots,x_{2N})$.

\subsection{Formal statement}\label{sec.formal}
Let 
\begin{equation}\label{eq.def.S}
\Simpl^{n} := \left\{\underline{x} \in [0,1]^{2n-1}:\ \sum_{i = 1}^{2n -1} x_i\le 1\right\},
\end{equation}
for some $n \in \mathbb N$. Moreover, let $\phi: \Simpl^n \rightarrow \mathbb R$ be a twice continuously differentiable function (in each component) and set $x_{2n} := 1-\sum_{i=1}^{2n-1} x_i$ for $\underline{x} \in \Simpl^n$. We define the following operator
\begin{equation}
L_\alpha \phi(\underline{x}) =  L^{\text{res}}\phi(\underline{x}) +  L^{\text{sel}}_\alpha\phi(\underline{x})+  L^{\text{mut}}\phi(\underline{x}),
\end{equation}
where for $\alpha, \ \vartheta_0,\vartheta_1\ge 0$:
\begin{align}
L^{\text{res}}\phi(\underline{x})= \frac{1}{2} &\sum_{i,j = 1}^{2n-1}  x_i(\delta_{i,j}-x_j) \ \frac{\partial}{\partial x_i}\frac{\partial}{\partial x_j}  \phi (\underline{x}),
\end{align}
\begin{align}
L^{\text{sel}}_\alpha\phi(\underline{x})= \alpha \left[\left(\sum_{i = 1}^{n} x_{i+n}\right) \sum_{i = 1}^n x_i \ \frac{\partial}{\partial x_i} \phi (\underline{x}) - \left(\sum_{i = 1}^{n} x_{i}\right) \sum_{i =n+1}^{2n-1} x_{i} \ \frac{\partial}{\partial x_i} \phi (\underline{x})\right]
\end{align}
and
\begin{align}
L^{\text{mut}}&\phi(\underline{x})=  \sum_{i = 1}^n  (\vartheta_0 x_{i+n} - \vartheta_1 x_{i}) \ \frac{\partial }{\partial x_i} \phi (\underline{x}) 
+  \sum_{i = 1}^{n-1}  (\vartheta_1 x_i-\vartheta_0 x_{i+n})\ \frac{\partial}{\partial x_{i+n}} \phi (\underline{x}).
\end{align}

\begin{proposition}  \label{lem.WFMMS}
The martingale problem associated with $(L,C^2(\Simpl^n),\underline{x})$ is well-posed for all initial values $\underline{x} \in \Simpl^n$. Moreover, the associated process $X^n$ is Feller (i.e. the corresponding semigroup is Feller) and has a modification with continuous paths. 
\end{proposition}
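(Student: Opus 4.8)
The plan is to recognise $L_\alpha$ as the generator of a classical $2n$-type Wright--Fisher diffusion and to transfer well-posedness from the neutral case by a Girsanov argument. Write $\underline x=(x_1,\dots,x_{2n-1})$, $x_{2n}:=1-\sum_{i=1}^{2n-1}x_i$. Then $L^{\text{res}}$ is the standard neutral multi-type resampling operator on the $(2n-1)$-dimensional simplex $\Simpl^n$, with diffusion matrix $a(\underline x)=\big(x_i(\delta_{ij}-x_j)\big)_{i,j\le 2n-1}$; $L^{\text{mut}}$ is the drift of the conservative mutation matrix $Q$ on $\{1,\dots,2n\}$ that sends $i+n\mapsto i$ at rate $\vartheta_0$ and $i\mapsto i+n$ at rate $\vartheta_1$ (for $i=1,\dots,n$); and, as one checks by a direct computation, $L^{\text{sel}}_\alpha$ is genic selection with the \emph{constant} fitness vector $\sigma=(\alpha,\dots,\alpha,0,\dots,0)$ ($n$ entries equal to $\alpha$, then $n-1$ zeros), in the sense that $\sum_{j\le 2n-1}a_{ij}(\underline x)\sigma_j = x_i\big(\sigma_i-\sum_{j\le 2n-1}x_j\sigma_j\big)$ equals $\alpha x_i\sum_{k=1}^n x_{k+n}$ for $i\le n$ and $-\alpha x_i\sum_{k=1}^n x_k$ for $n<i\le 2n-1$. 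In particular all coefficients of $L_\alpha$ are polynomials, hence smooth and bounded on the compact set $\Simpl^n$, and $a(\underline x)$ is symmetric positive semidefinite there.

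For existence I would invoke the standard existence theorem for martingale problems on a compact state space with continuous coefficients (see \cite[Ch.~4]{EK86}), together with the fact that $\Simpl^n$ is invariant: on each face $\{x_k=0\}$ the $k$-th row and column of $a$ vanish while $L_\alpha x_k\ge 0$ there (it equals $\vartheta_0 x_{k+n}$ for $k\le n$ and $\vartheta_1 x_{k-n}$ for $n<k\le 2n-1$), and on $\{x_{2n}=0\}$ one has, for the affine function $\psi=1-\sum_{i<2n}x_i$, that $\sum_{i,j}a_{ij}\,\partial_i\psi\,\partial_j\psi=x_{2n}(1-x_{2n})$ vanishes while $L_\alpha\psi=\vartheta_1 x_n\ge 0$. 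Thus wherever the diffusion could carry a coordinate to the boundary it degenerates, and the drift then points inward, so the process cannot leave $\Simpl^n$. Alternatively, existence follows from a diffusion approximation by the Moran-type particle systems of Section~\ref{sec.idea}, which automatically yields paths in $D([0,\infty),\Simpl^n)$.

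The crux, and the step I expect to be the main obstacle, is \emph{uniqueness}, since $a$ degenerates on $\partial\Simpl^n$ and one cannot simply quote a non-degenerate-ellipticity result. Here the point of the computation above is that $L^{\text{sel}}_\alpha$ is a selection drift of the form $a(\underline x)\sigma$ for a \emph{constant} vector $\sigma$; consequently the Dawson--Girsanov transformation, used in exactly this spirit in \cite[Thm.~2]{DGP12}, applies with locally bounded exponent, so that on every finite time horizon the solution of the $(L_\alpha,C^2(\Simpl^n))$-martingale problem is mutually absolutely continuous with the solution of the neutral one $(L^{\text{res}}+L^{\text{mut}},C^2(\Simpl^n))$. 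Hence it suffices to establish well-posedness of the latter, which is classical finite-type Wright--Fisher diffusion theory: uniqueness holds by the standard moment-duality argument (the dual being a coalescent run with the mutation matrix $Q$), see \cite[Ch.~10]{EK86}. One may also invoke \cite[Ch.~10]{EK86} directly for $L_\alpha$ itself and skip the Girsanov reduction.

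Finally, granted well-posedness for every initial point and continuity of the (polynomial) coefficients on the compact space $\Simpl^n$, the map $\underline x\mapsto P_{\underline x}$ is continuous, so $T_tf(\underline x):=E_{\underline x}[f(X^n_t)]$ maps $C(\Simpl^n)$ into itself and defines a strongly continuous, conservative, strong Markov semigroup; this is the standard machinery of \cite[Ch.~4]{EK86}. A continuous modification exists because $L_\alpha$ is a local second-order operator with no jump part, so the usual estimate — bounding, via $L_\alpha$ applied to suitable bump functions, the expected number of jumps of size exceeding any $\eps>0$ — shows the c\`adl\`ag solution has no jumps; equivalently, the Moran approximation of Section~\ref{sec.idea} produces a limit with continuous paths.
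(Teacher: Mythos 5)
Your proposal is correct, and the boundary computations you carry out (the drift $b_k=\vartheta_0 x_{k+n}$ resp.\ $\vartheta_1 x_{k-n}$ on $\{x_k=0\}$, and $L_\alpha\psi=\vartheta_1 x_n\ge 0$ on $\{x_{2n}=0\}$ for $\psi=1-\sum_{i<2n}x_i$) are exactly the hypotheses one needs to verify; in fact your sign on the face $\{\sum_{i<2n}x_i=1\}$ is the accurate one, the relevant condition being $\sum_i b_i\le 0$ there. Where you diverge from the paper is in how uniqueness is obtained. The paper's proof is a one-step citation: it writes out the drift vector $b$, observes that it is Lipschitz and satisfies the two boundary inequalities, and then quotes Theorem 8.2.8 of \cite{EK86}, which for the Wright--Fisher diffusion matrix $a_{ij}(\underline x)=x_i(\delta_{ij}-x_j)$ with an arbitrary Lipschitz inward-pointing drift delivers well-posedness, the Feller property and continuity of paths all at once --- no Girsanov transform and no duality are needed. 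Your route instead identifies the structural form of the drift (genic selection $a(\underline x)\sigma$ with constant $\sigma$, plus a conservative mutation matrix) and reduces to the neutral case by the Dawson--Girsanov transformation, settling the neutral case by moment duality. This is a valid alternative (the density is a true martingale since all coefficients are bounded on the compact simplex, and the coalescent-with-mutation dual determines the moments, hence the law, on $\Simpl^n$), and it buys you the mutual absolute continuity with the neutral process and the explicit interpretation of $L^{\text{sel}}_\alpha$ as genic selection, which the paper exploits only informally; but for the proposition itself it is heavier machinery than required, since Theorem 8.2.8 does not care that the selection drift has the special form $a\sigma$ --- Lipschitz continuity and the boundary inequalities suffice.
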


\begin{remark}
In the situation of the previous section $x_1,\ldots,x_n$ would be the fraction of fit types, i.e. $y_1,\ldots,y_n$, and $x_{n+1},\ldots,x_{2n}$  would be the fraction of unfit types, i.e. $z_1,\ldots,z_n$. 
\end{remark}

In the following we will always write for simplicity $Y^n(i) := X^n(i)$ and $Z^n(i):= X^n(i+n)$ for $i = 1,\ldots,n$, where $X^n(2n) := 1- \sum_{i = 1}^{2n-1} X^n(i) $. 

{
\begin{theorem}\label{thm.key}
Let $\bar Y_t $ be the two type Wright-Fisher diffusion at time $t > 0$ (given in the introduction) and let 
\begin{equation}
\left(Y^{n,t}_0(i),Z^{n,t}_0(i)\right) = \left(\frac{\bar Y_t}{n},\frac{1-\bar Y_t}{n}\right),\qquad i = 1,\ldots,n. 
\end{equation} 
\nolinebreak
Then 
\nolinebreak
\begin{equation}
P(R_{T} \le h) = \lim_{n \rightarrow \infty} E\left[\sum_{i = 1}^n \left(Y_{h}^{n,T-h}(i) + Z_{h}^{n,T-h}(i)\right)^2\right],
\end{equation}
\nolinebreak
for all $h < T$.
\end{theorem}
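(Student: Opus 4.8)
The plan is to pass through the finite-$N$ Moran model, rewrite $P(R^N_T\le h)$ in terms of forward-evolving family sizes, coarse-grain the $N$ families into $n$ blocks so that the block process converges (as $N\to\infty$) to $X^n$, and finally control the coarse-graining error as $n\to\infty$.

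First I would work at fixed $N$ and fixed $h<T$. From the graphical construction of Section~\ref{sec.construction}, the set $\{s\in[0,T]:A_s(i,T)=A_s(j,T)\}$ is a.s.\ a down-closed interval $[0,s_0]$, so $r_T(i,j)\le h$ if and only if $A_{T-h}(i,T)=A_{T-h}(j,T)$. Hence, writing $F_k(s):=\tfrac1N\#\{i\in I_N:A_{T-h}(i,T-h+s)=k\}$ for the relative size at forward time $s\in[0,h]$ of the family founded by the time-$(T-h)$ individual $k$ (so $F_k(0)=1/N$),
\begin{equation*}
P(R^N_T\le h)=\frac1{N^2}\sum_{i,j=1}^N P\big(A_{T-h}(i,T)=A_{T-h}(j,T)\big)=\sum_{k=1}^N E\big[F_k(h)^2\big].
\end{equation*}
I would then split $F_k=Y_k+Z_k$ according to whether the member carries the fit type $1$ or the unfit type $0$. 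The process $(Y_k(s),Z_k(s))_{k=1}^N$ is a Markov jump process on the simplex: resampling moves mass $\pm1/N$ between two families while preserving internal types, mutation flips a unit of $z_k$ into $y_k$ at rate $\vartheta_0$ per unit (and conversely at rate $\vartheta_1$), and a selective arrow lets a fit member of one family replace---and convert to fit---a uniformly chosen member of another family. Its initial datum at $s=0$ is read off the Moran configuration at physical time $T-h$; in particular the total fit mass equals $\bar Y^N_{T-h}$, and $\bar Y^N\Rightarrow\bar Y$.

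Next, for fixed $n$, I would partition $I_N$ at time $T-h$ into $n$ blocks $G_1,\dots,G_n$ of equal size by a uniformly random assignment, independent of all the Poisson clocks, and set $Y^{(n),N}_a:=\sum_{k\in G_a}Y_k$, $Z^{(n),N}_a:=\sum_{k\in G_a}Z_k$. One checks that $(Y^{(n),N}_a,Z^{(n),N}_a)_{a=1}^n$ is again Markov and that a Taylor expansion of its generator (jumps of size $1/N$, rates of order $N$ or $N^2$, remainders of order $1/N$) converges to $L_\alpha$ on $\Simpl^n$: neutral resampling produces $L^{\text{res}}$, mutation produces $L^{\text{mut}}$, and the selective events produce exactly $L^{\text{sel}}_\alpha$---here it is essential that $\sum_a(Y^{(n),N}_a+Z^{(n),N}_a)=1$ and $\sum_a Y^{(n),N}_a=\bar Y^N$, which is what turns the family-local selection rate $\alpha N y_a F_b$ into the global factors $\big(\sum_i x_{i+n}\big)$ and $\big(\sum_i x_i\big)$. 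By the law of large numbers the random initial datum converges to $(\bar Y_{T-h}/n,(1-\bar Y_{T-h})/n)_{a=1}^n$, so, invoking well-posedness and the Feller property from Proposition~\ref{lem.WFMMS} together with the standard martingale-problem convergence theorem---argued jointly with $\bar Y^N_{T-h}\Rightarrow\bar Y_{T-h}$---one gets $(Y^{(n),N},Z^{(n),N})\Rightarrow X^n$ with precisely the initial condition of Theorem~\ref{thm.key}. Since $\underline x\mapsto\sum_{a=1}^n(x_a+x_{a+n})^2$ is a bounded continuous (indeed polynomial) function on $\Simpl^n$ and $X^n$ has continuous paths, this yields
\begin{equation*}
\lim_{N\to\infty}E\Big[\sum_{a=1}^n\big(Y^{(n),N}_a(h)+Z^{(n),N}_a(h)\big)^2\Big]=E\Big[\sum_{i=1}^n\big(Y^{n,T-h}_h(i)+Z^{n,T-h}_h(i)\big)^2\Big].
\end{equation*}

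Finally I would let $n\to\infty$. Since $\sum_a\big(Y^{(n),N}_a(h)+Z^{(n),N}_a(h)\big)^2=\sum_k F_k(h)^2+\sum_a\sum_{k\ne k'\in G_a}F_k(h)F_{k'}(h)$, the block sum overcounts $P(R^N_T\le h)=\sum_kE[F_k(h)^2]$, after taking expectations, exactly by the expected within-block, between-family mass; averaging over the uniformly random grouping, each ordered pair $k\ne k'$ falls in a common block with probability at most $1/n$ (for $N$ a multiple of $n$, say), so this error is at most $\tfrac1n\sum_{k\ne k'}E[F_k(h)F_{k'}(h)]\le\tfrac1n$. Together with Proposition~\ref{prop.gen.distance} (which gives $P(R^N_T\le h)\to P(R_T\le h)$ for $h<T$) and the display above, this shows $\big|P(R_T\le h)-E[\sum_{i=1}^n(Y^{n,T-h}_h(i)+Z^{n,T-h}_h(i))^2]\big|\le\tfrac1n$, and $n\to\infty$ finishes the proof. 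I expect the main obstacle to be the generator convergence of the block process: one must check that the Taylor remainders vanish, match the limit generator with the $(2n-1)$-coordinate parametrization of $\Simpl^n$ (the dependent coordinate $x_{2n}$), and handle the fact that conditioning on $\bar Y_{T-h}$ makes the initial condition random, so the convergence must be argued jointly with $\bar Y^N_{T-h}\Rightarrow\bar Y_{T-h}$; everything else involves only quantities bounded by $1$, where dominated convergence and the interchange of the two limits are routine.
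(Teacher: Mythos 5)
Your argument is correct, but it reaches the conclusion by a genuinely different route than the paper. The paper never coarse-grains: it encodes the family sizes as a purely atomic measure on $[0,2]$ (Lemma \ref{lem.MMMS.generator}), proves that both the finite-$N$ family process $\mathcal X^{N,T}$ and the image $\mathcal E(X^{n,T})$ of the $2n$-dimensional diffusion converge to one and the same measure-valued Fleming--Viot process $\mathcal X^T$, and then identifies the two limits of $\sum_k(\text{atom sizes})^2$ by upgrading the convergence to the weak atomic topology of Ethier--Kurtz, where $\mu\mapsto\int\mu(\{x\})\,\mu(dx)$ is continuous on $\mathcal M^a\cup\mathcal M^c$ and the Fleming--Viot process is purely atomic for $t>0$. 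Your replacement for all of this topological input is the observation that the randomly blocked process is again Markov (lumpability of the rates in Lemma \ref{lem.WFMS.finite}, which does hold since resampling and selection rates are products and mutation rates are linear in the coordinates) together with the quantitative bound $0\le E[\sum_a(\sum_{k\in G_a}F_k)^2]-\sum_kE[F_k^2]\le\frac1n$, \emph{uniformly in} $N$, which licenses the interchange of the limits $N\to\infty$ and $n\to\infty$ without ever discussing atoms of the limit measure. This buys a more elementary and quantitative proof (an explicit $1/n$ rate, no appeal to \cite{EKatomic} or to pure atomicity of the Fleming--Viot process from \cite{dawson}); what it costs is the exchangeability bookkeeping for the random partition, the hypergeometric concentration giving the initial condition $(\bar Y_{T-h}/n,(1-\bar Y_{T-h})/n)$, and the generator convergence of the block process --- though the latter is exactly what the paper supplies anyway in Lemma \ref{lem.WFMS.finite} and Remark \ref{r.conv.WF}, since your block process coincides in law with the paper's $X^{2n,N}$ up to an $o(1)$ perturbation of the initial datum. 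The points you flag as remaining obstacles (Taylor remainders, the dependent coordinate $x_{2n}$, arguing the convergence jointly with $\bar Y^N_{T-h}\Rightarrow\bar Y_{T-h}$) are the right ones and are all routine; I see no gap.
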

}
\begin{remark}
Note that we have
\begin{equation}
\left(\sum_{i = 1}^n Y^{n,T}_t(i)\right)_{t \ge 0} \stackrel{d}{=} \left(\bar Y_{t+T}\right)_{t \ge 0}.
\end{equation} 
\end{remark}

In order to avoid confusion we note at this point that we typically use $n$ for the dimension of the Wright-Fisher diffusion and $N$ for the population size of the finite model.

\section{Proofs}

Here we present the proofs of our results. We start with the proof of the main Theorem \ref{thm.main} and then we will prove the key observation, Theorem \ref{thm.key}. The proofs are based on the ideas in \cite{Grieshammer}.

\subsection{Proof of the main result}\label{sec.proof.main}

We use the notations from Section \ref{sec.formal}, where we abbreviate $X^n_t:=X^{n,T-h}_t$, $t \ge 0$, for some fixed $T > h > 0$ (at the end we are interested in $X^n_h$). We start with the following Lemma, which follows by a straight forward calculation (see Appendix \ref{a.gen}): 

\begin{lemma}
Define the functions 
\begin{align}
\phi_1: \Simpl^n \rightarrow \mathbb R,\quad  \underline{x} &\mapsto \sum_{i = 1}^n(x_i+x_{i+n})^2,\\
\phi_2: \Simpl^n \rightarrow \mathbb R,\quad  \underline{x} &\mapsto \sum_{i = 1}^n(x_i+x_{i+n})\left(x_i-(x_i+x_{i+n}) \sum_{j = 1}^n x_j\right) \\
\phi_3: \Simpl^n \rightarrow \mathbb R,\quad  \underline{x} &\mapsto 
\sum_{i = 1}^n \left(x_i-(x_i+x_{i+n}) \sum_{j = 1}^n x_j \right)^2 - 2 \phi_2(\underline{x})\cdot \sum_{j = 1}^n x_j 
\end{align}
where  $x_{2n} := 1-\sum_{i = 1}^{2n-1} x_i$. Then 
\begin{align}
L_\alpha \phi_1(\underline{x}) &= 1-\phi_1(\underline{x}) + 2 \alpha \phi_2(\underline{x}), \\
L_\alpha \phi_2(\underline{x}) &= -(3+\vartheta_0+\vartheta_1-\alpha) \phi_2(\underline{x}) + \alpha \phi_3(\underline{x}).
\end{align}
\end{lemma}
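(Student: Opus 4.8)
The plan is to prove both identities by splitting $L_\alpha = L^{\mathrm{res}} + L^{\mathrm{sel}}_\alpha + L^{\mathrm{mut}}$ and evaluating each of the three pieces on $\phi_1$ and $\phi_2$ separately, then summing. It pays first to fix notation exposing the structure: write $\mathfrak{f}_i := x_i + x_{i+n}$ for the family sizes and $y := \sum_{i=1}^n x_i$ for the total frequency of the fit type, so that $\sum_{i=1}^n \mathfrak{f}_i = 1$, $\sum_{i=1}^n x_{i+n} = 1-y$, and the centred variables $g_i := x_i - \mathfrak{f}_i\,y = (1-y)x_i - y\,x_{i+n}$ satisfy $\sum_{i=1}^n g_i = 0$. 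In these terms $\phi_1 = \sum_i \mathfrak{f}_i^2$, $\phi_2 = \sum_i \mathfrak{f}_i\,g_i = \sum_i \mathfrak{f}_i x_i - y\,\phi_1$, and $\phi_3 = \sum_i g_i^2 - 2y\,\phi_2$. A preliminary point should be settled once: the operators are written over the $2n-1$ free coordinates with $x_{2n} = 1 - \sum_{i<2n} x_i$ eliminated, so naive differentiation picks up $-1$ corrections in the last slot. The cleanest route is to check that $L^{\mathrm{res}}$ is the restriction to $\Simpl^n$ of the symmetric Wright--Fisher operator $\tfrac12\sum_{i,j=1}^{2n}x_i(\delta_{ij}-x_j)\partial_i\partial_j$ (a standard fact, that operator annihilating $\sum_i x_i - 1$ and hence descending to the simplex), and that $L^{\mathrm{sel}}_\alpha$ and $L^{\mathrm{mut}}$ are the restrictions of the drifts $\alpha\bigl[(\sum_{i=n+1}^{2n}x_i)\sum_{i=1}^n x_i\,\partial_{x_i} - (\sum_{i=1}^n x_i)\sum_{i=n+1}^{2n}x_i\,\partial_{x_i}\bigr]$ and $\sum_{i=1}^n(\vartheta_0 x_{i+n} - \vartheta_1 x_i)\partial_{x_i} + \sum_{i=1}^n(\vartheta_1 x_i - \vartheta_0 x_{i+n})\partial_{x_{i+n}}$ respectively (here one uses that the drift in the $x_{2n}$-direction is forced by the requirement that it sum to zero on the simplex). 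After this normalization every sum below may be taken over $1,\dots,2n$, and the $-1$ corrections are automatically accounted for.

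Consider first the mutation part. Since $L^{\mathrm{mut}}$ is a derivation that only transfers mass between $x_i$ and $x_{i+n}$, it leaves every family size $\mathfrak{f}_i$ invariant and hence annihilates any function of $(\mathfrak{f}_1,\dots,\mathfrak{f}_n)$ alone; in particular $L^{\mathrm{mut}}\phi_1 = 0$. For $\phi_2 = \sum_i \mathfrak{f}_i x_i - y\,\phi_1$, the Leibniz rule together with $L^{\mathrm{mut}} y = \vartheta_0(1-y) - \vartheta_1 y$, the identity $\mathfrak{f}_i x_{i+n} = \mathfrak{f}_i^2 - \mathfrak{f}_i x_i$, and $L^{\mathrm{mut}}\phi_1 = 0$ gives, after a short cancellation, $L^{\mathrm{mut}}\phi_2 = -(\vartheta_0+\vartheta_1)\,\phi_2$. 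This is exactly the mutation-dependent term of the second identity and confirms that mutation contributes nothing to the first.

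For the resampling part, split $L^{\mathrm{res}} = \tfrac12\sum_i x_i\partial_i^2 - \tfrac12\sum_{i,j}x_i x_j\partial_i\partial_j$. As $\phi_1$ is quadratic with $\partial_i^2\phi_1 = 2$ for every $i$ and, off the diagonal, $\partial_i\partial_j\phi_1 = 2$ precisely on the family pairs $\{k,k+n\}$, the first half reduces to $\sum_i x_i = 1$ and the second to $-\phi_1$ (using $\sum_i\mathfrak{f}_i = 1$), so $L^{\mathrm{res}}\phi_1 = 1 - \phi_1$. The same split applied to the cubic $\phi_2$ gives $L^{\mathrm{res}}\phi_2 = -3\,\phi_2$, where the coefficient $3 = \binom{3}{2}$ counts the rate at which a pair among the three sampled lines contributing to $\phi_2$ coalesces, the lower-order remainder cancelling precisely because $\phi_2$ is built from the centred variables $g_i$. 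For the selection part one differentiates (so $\partial_{x_i}\phi_1 = 2\mathfrak{f}_i$ with the obvious pairing) and uses $(1-y)x_i - y\,x_{i+n} = g_i$ to collapse $L^{\mathrm{sel}}_\alpha\phi_1 = 2\alpha\bigl[(1-y)\sum_i \mathfrak{f}_i x_i - y\sum_i \mathfrak{f}_i x_{i+n}\bigr] = 2\alpha\sum_i \mathfrak{f}_i g_i = 2\alpha\,\phi_2$; the analogous, longer computation for $\phi_2$ yields $\alpha\,\phi_2 + \alpha\,\phi_3$, the quartic remainder being exactly $\alpha\,\phi_3$. Adding the three contributions in each case gives $L_\alpha\phi_1 = 1 - \phi_1 + 2\alpha\,\phi_2$ and $L_\alpha\phi_2 = -(3+\vartheta_0+\vartheta_1-\alpha)\,\phi_2 + \alpha\,\phi_3$.

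The main obstacle is the bookkeeping in the first step: one must reconcile the $2n-1$-coordinate forms of the operators with their symmetric $2n$-coordinate versions (equivalently, track the cancellation of the $-1$ corrections coming from $x_{2n} = 1 - \sum_{i<2n}x_i$) before one may differentiate as if all coordinates were free. Once that is settled, the remainder is mechanical if somewhat lengthy polynomial algebra, with the selection term acting on $\phi_2$ — the only place the degree-four function $\phi_3$ is produced — being the bulkiest single block to keep organized.
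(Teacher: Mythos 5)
Your proposal is correct and follows essentially the same route as the paper: a direct verification obtained by evaluating $L^{\text{res}}$, $L^{\text{sel}}_\alpha$ and $L^{\text{mut}}$ separately on $\phi_1$ and $\phi_2$ and summing, and all of your intermediate identities ($L^{\text{mut}}\phi_1=0$, $L^{\text{mut}}\phi_2=-(\vartheta_0+\vartheta_1)\phi_2$, $L^{\text{res}}\phi_1=1-\phi_1$, $L^{\text{res}}\phi_2=-3\phi_2$, $L^{\text{sel}}_\alpha\phi_1=2\alpha\phi_2$, $L^{\text{sel}}_\alpha\phi_2=\alpha(\phi_2+\phi_3)$) check out against the paper's appendix, as does your reduction to the symmetric $2n$-coordinate operators. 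The only difference is organizational: the appendix tabulates the action of the three sub-operators on the monomial blocks $\bar Y^m\sum_i y_i^2$, $\bar Z^m\sum_i z_i^2$, $\bar Y^m\sum_i y_i z_i$ and combines them linearly, whereas you work directly with the family sizes $\mathfrak{f}_i$ and the centred variables $g_i$ — both amount to the same straightforward calculation.
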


We define the following times: 
\begin{align}
t_0 &:= \inf\{t \ge 0:\ E[\phi_2(X^n_t)] < 0\},\\
\overline{\tau}_0 &:= \inf\{t \ge t_0:\ \phi_2(X^n_t) \ge 0\},\\
\underline{\tau}_0 &:= \inf\{t \ge t_0:\ \phi_2(X^n_t) < 0\}, \\ 
\underline{\tau}_\eps &:= \inf\{t \ge t_0:\ \phi_2(X^n_t) \le \eps\},
\end{align}
for $\eps > 0$.

\begin{lemma}\label{lem.sel.stop}
If we denote by $\mathcal F_t$ the filtration generated by $X^n$, then $\overline{\tau}_0,\underline{\tau}_0,\underline{\tau}_\eps$ are $\mathcal F_{t+} := \bigcap_{s > t} \mathcal F_{s}$-stopping times  and they satisfy  $\underline{\tau}_\eps \downarrow \underline{\tau}_0$ almost surely. Moreover, we have $\phi_2(X^n_{\overline{\tau}_0}) = \phi_2(X^n_{\underline{\tau}_0})=  E[\phi_2(X^n_{t_0})] = 0$ almost surely. 
\end{lemma}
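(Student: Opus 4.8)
The plan is to dispatch the three assertions one after another, using only that $X^n$ has continuous paths and is adapted to $(\mathcal F_t)_{t\ge0}$ (Proposition \ref{lem.WFMMS}), that $\phi_2$ is a polynomial, hence continuous, on the compact set $\Simpl^n$, and the martingale property of the well-posed martingale problem. Observe first that $t_0$ is a deterministic constant, since $t\mapsto E[\phi_2(X^n_t)]$ is a deterministic function; we may assume $t_0<\infty$, the case $t_0=\infty$ being trivial. For the identity $E[\phi_2(X^n_{t_0})]=0$ I would use Dynkin's formula, $E[\phi_2(X^n_t)]=E[\phi_2(X^n_0)]+\int_0^tE[L_\alpha\phi_2(X^n_s)]\,ds$, and boundedness of $L_\alpha\phi_2$ on $\Simpl^n$, which makes $g(t):=E[\phi_2(X^n_t)]$ continuous; by the definition of $t_0$ one has $g\ge0$ on $[0,t_0)$, so $g(t_0)\ge0$, while there are $t_k\downarrow t_0$ with $g(t_k)<0$, so $g(t_0)\le0$; hence $g(t_0)=0$.

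The stopping-time property follows from the standard hitting-time reductions, the deterministic lower cut-off $t_0$ causing no measurability trouble. Both $\underline\tau_\eps$ and $\overline\tau_0$ are first entrance times, after $t_0$, of the continuous process $X^n$ into the \emph{closed} sets $\{\phi_2\le\eps\}$ and $\{\phi_2\ge0\}$; for a closed set $C$ and $t\ge t_0$, continuity of $s\mapsto\mathrm{dist}(X^n_s,C)$ and compactness of $[t_0,t]$ give
\[
\big\{\inf\{s\ge t_0:X^n_s\in C\}\le t\big\}=\Big\{\inf_{s\in\mathbb Q\cap[t_0,t]}\mathrm{dist}(X^n_s,C)=0\Big\}\in\mathcal F_t,
\]
so $\underline\tau_\eps$ and $\overline\tau_0$ are even $\mathcal F_t$-stopping times. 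The set $\{\phi_2<0\}$ is \emph{open}, and here continuity of the path lets one replace an arbitrary witness time by a rational one: for $t>t_0$, $\{\underline\tau_0<t\}=\bigcup_{s\in\mathbb Q\cap(t_0,t)}\{\phi_2(X^n_s)<0\}\in\mathcal F_t$, whence $\{\underline\tau_0\le t\}=\bigcap_{k\ge1}\{\underline\tau_0<t+1/k\}\in\mathcal F_{t+}$, so $\underline\tau_0$ is an $\mathcal F_{t+}$-stopping time.

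It remains to identify the limit of $\underline\tau_\eps$ and the boundary values, and this is where I expect the real work to lie. Since the sets $\{\phi_2\le\eps\}$ shrink as $\eps\downarrow0$ and all contain $\{\phi_2<0\}$, the family $(\underline\tau_\eps)_{\eps>0}$ is monotone with $\underline\tau_\eps\le\underline\tau_0$, so $\tau_\ast:=\lim_{\eps\downarrow0}\underline\tau_\eps$ exists with $\tau_\ast\le\underline\tau_0$; moreover, for $s\in[t_0,\tau_\ast)$ one has $s<\underline\tau_\eps$ for all small $\eps$ and hence $\phi_2(X^n_s)>\eps>0$, so $\phi_2(X^n_\cdot)>0$ on $[t_0,\tau_\ast)$, while path-continuity and closedness of $\{\phi_2\le\eps\}$ give $\phi_2(X^n_{\underline\tau_\eps})\le\eps$ and therefore $\phi_2(X^n_{\tau_\ast})\le0$ on $\{\tau_\ast<\infty\}$. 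In the same way, on $\{t_0<\overline\tau_0<\infty\}$ one has $\phi_2(X^n_s)<0$ for $s\in[t_0,\overline\tau_0)$, so path-continuity and closedness of $\{\phi_2\ge0\}$ force $\phi_2(X^n_{\overline\tau_0})=0$, and likewise $\phi_2(X^n_{\underline\tau_0})=0$ on $\{t_0<\underline\tau_0<\infty\}$ using $\{\phi_2\le0\}$. What is then left — and this is the main obstacle — is to exclude, almost surely, that $\phi_2(X^n_\cdot)$ is \emph{tangent} to the level $0$: namely that $\phi_2(X^n_{t_0})=0$ a.s., so that the boundary cases $\overline\tau_0=t_0$ and $\underline\tau_0=t_0$, which are precisely $\{\phi_2(X^n_{t_0})\ge0\}$ and $\{\phi_2(X^n_{t_0})<0\}$, cause no trouble, and that once $\phi_2(X^n_\cdot)$ reaches $0$ from strictly above it does pass strictly below, so that $\tau_\ast=\underline\tau_0$. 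I would obtain these path-regularity facts by combining the explicit initial condition of Theorem \ref{thm.key} — for which $\phi_2(X^n_0)=0$ holds identically, because there all family sizes $X^n_0(i)+X^n_0(i+n)$ equal $1/n$ and all internal frequencies coincide — with the strong Markov property and the non-degeneracy of the resampling part of $L_\alpha$ in the interior of $\Simpl^n$, which makes $\phi_2(X^n_\cdot)$ oscillate around any level it attains; together with $E[\phi_2(X^n_{t_0})]=0$ this upgrades the expectation-level facts to the required almost-sure statements and closes the proof.
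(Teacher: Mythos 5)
Your treatment of the stopping-time property and of $E[\phi_2(X^n_{t_0})]=0$ is fine and essentially matches the paper (which simply cites Proposition 2.1.5 of \cite{EK86} for the former and ``continuity'' for the latter; your hitting-time argument and the Dynkin-formula argument for continuity of $t\mapsto E[\phi_2(X^n_t)]$ are correct substitutes). The genuine gap is in the convergence $\underline{\tau}_\eps\downarrow\underline{\tau}_0$. You have taken the displayed definition $\underline{\tau}_\eps=\inf\{t\ge t_0:\phi_2(X^n_t)\le\eps\}$ at face value; but with that reading the family $(\underline{\tau}_\eps)$ \emph{increases} as $\eps\downarrow0$ (the sets $\{\phi_2\le\eps\}$ shrink), which is incompatible with the lemma's assertion $\underline{\tau}_\eps\downarrow\underline{\tau}_0$ --- this alone should signal that the definition carries a sign typo. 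The intended definition, confirmed by the paper's own proof (``$\phi_2(X^n_t)\le-\eps$ implies \dots'') and by the later use $E[g_{\underline{\tau}_\eps}1(\cdot)]=-\eps\,P(\cdot)$ in Section \ref{sec.proof.main}, is $\underline{\tau}_\eps=\inf\{t\ge t_0:\phi_2(X^n_t)\le-\eps\}$. With this definition the claim is elementary: $\{\phi_2\le-\eps\}\subset\{\phi_2\le-\eps'\}\subset\{\phi_2<0\}$ for $0<\eps'\le\eps$ gives $\underline{\tau}_0\le\underline{\tau}_{\eps'}\le\underline{\tau}_\eps$, and for the reverse inequality one picks $t_k\downarrow\underline{\tau}_0$ with $\phi_2(X^n_{t_k})\le-\eps_k<0$, whence $\underline{\tau}_{\eps_k}\le t_k$ and $\lim_{\eps\downarrow0}\underline{\tau}_\eps\le\underline{\tau}_0$. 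Because of the misreading, the heart of your argument becomes the unproved claim that $\phi_2(X^n_\cdot)$ must cross strictly below any level it touches from above; that ``oscillation'' statement is genuinely nontrivial (the diffusion matrix of $L^{\text{res}}$ degenerates on the boundary of $\Simpl^n$, and $\phi_2(X^n_\cdot)$ is a nonlinear functional of a multidimensional degenerate diffusion, not a one-dimensional nondegenerate one), and it is not needed at all once the definition is read correctly.

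Second, your plan to ``upgrade'' $E[\phi_2(X^n_{t_0})]=0$ to the pathwise statement $\phi_2(X^n_{t_0})=0$ a.s.\ inside this lemma does not go through: a centered random variable need not vanish, and the nondegeneracy heuristic supplies no sign information. You are right that the boundary cases $\overline{\tau}_0=t_0$ and $\underline{\tau}_0=t_0$ are exactly where $\phi_2(X^n_{\overline{\tau}_0})=\phi_2(X^n_{\underline{\tau}_0})=0$ can fail; the paper's one-line ``again a consequence of the continuity'' silently restricts to the events $\{\overline{\tau}_0>t_0\}$ and $\{\underline{\tau}_0>t_0\}$, where continuity together with the closedness of $\{\phi_2\ge0\}$ and $\{\phi_2\le0\}$ does the job. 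In the paper the pathwise fact $\phi_2(X^n_{t_0})=0$ a.s.\ is \emph{not} part of this lemma: it is derived only afterwards, in the proof of Theorem \ref{thm.main}, from the optional-sampling estimate $P(\underline{\tau}_0<\overline{\tau}_0\wedge S_0)=0$ (which yields $\phi_2(X^n_{t_0})\ge0$ a.s.) combined with $E[\phi_2(X^n_{t_0})]=0$. So either prove the last assertion only on $\{\overline{\tau}_0>t_0\}$ and $\{\underline{\tau}_0>t_0\}$ (which is all the main proof uses), or postpone the full statement until the sign of $\phi_2(X^n_{t_0})$ is available; as written, your route requires an input that is only produced downstream of this lemma.
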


\begin{proof}
The first part is Proposition 2.1.5 in \cite{EK86} together with the fact that $t \mapsto \phi_2(X^n_{t})$ is continuous (note that the image of a compact interval under a continuous function is compact and therefore closed). \par For the second part observe that 
$\phi_2(X^n_{t}) \le -\eps$ implies $\phi_2(X^n_{t}) \le -\eps'$ implies $\phi_2(X^n_{t}) < 0$ for all $0 < \eps' \le \eps$, by continuity. This gives $\underline{\tau}_0  \le \underline{\tau}_{\eps'} \le \underline{\tau}_\eps$. Now, by definition of 
$\underline{\tau}_0$, there is a sequence $t_n \ge \tau_0$ such that $t_n \downarrow \tau_0$ and $\phi_2(X^n_{t_n}) \le -\eps_n$ for some 
$\eps_n > 0$ and all $n \in \mathbb N$. This gives $t_n \ge \tau_{\eps_n}$ and the second part follows. \par
The last part is again a consequence of the continuity. 
\end{proof}

\begin{lemma}\label{lem.sel.end}
There is an almost surely finite stopping time $S$ such that $\phi_2(X^n_{t}) = 0$ for all $t \ge S$ and $S$ is integrable.  
\end{lemma}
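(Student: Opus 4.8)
The plan is to take $S$ to be the fixation time of the family decomposition, namely $S := \inf\{t \ge 0:\ \phi_1(X^n_t) = 1\}$, the first time that one ``family'' $\mathfrak{f}_i := Y^n(i) + Z^n(i)$ carries all the mass (note $\phi_1(\underline x) = \sum_{i=1}^n \mathfrak{f}_i^2 = 1$ together with $\sum_i \mathfrak{f}_i = 1$ forces some $\mathfrak{f}_{i_0} = 1$). Since $t \mapsto \phi_1(X^n_t)$ is continuous and $\{\phi_1 = 1\}$ is closed, $S$ is an $\mathcal{F}_{t+}$-stopping time. To obtain $\phi_2(X^n_t) = 0$ for all $t \ge S$ I would argue in two steps. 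First, the set $\{\phi_1 = 1\}$ is absorbing for $X^n$: on it exactly one $\mathfrak{f}_{i_0}$ equals $1$ and all other $\mathfrak{f}_i$ vanish, so both the resampling covariance $\delta_{ij}\mathfrak{f}_i - \mathfrak{f}_i\mathfrak{f}_j$ and the selection drift $\alpha\,\mathfrak{f}_i(p_i - Y)$ of every family mass vanish there (with $p_i := Y^n(i)/\mathfrak{f}_i$ and $Y := \sum_j Y^n(j)$), while $L^{\mathrm{mut}}\mathfrak{f}_i = 0$ shows mutation never moves family mass; hence by well-posedness of the martingale problem (Proposition \ref{lem.WFMMS}) the family vector $(\mathfrak{f}_i)_i$ is frozen after $S$. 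Second, whenever $\phi_1(\underline x) = 1$ one has $\phi_2(\underline x) = 0$: with $\mathfrak{f}_{i_0} = 1$ one gets $Y = x_{i_0}$, so $\phi_2(\underline x) = \mathfrak{f}_{i_0}\big(x_{i_0} - \mathfrak{f}_{i_0} Y\big) = x_{i_0} - Y = 0$.

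It remains to show $E[S] < \infty$ (which in particular gives $S < \infty$ a.s.), and this is the heart of the proof. Applying $L_\alpha$ to linear and quadratic functions of the $\mathfrak{f}_i$ shows that the family-mass vector $\mathfrak{f} = (\mathfrak{f}_i)_{i=1}^n$ is a diffusion on the simplex $\{\mathfrak{f} \ge 0,\ \sum_i \mathfrak{f}_i = 1\}$ with the Kingman resampling covariance $d\langle \mathfrak{f}_i,\mathfrak{f}_j\rangle_t = (\delta_{ij}\mathfrak{f}_i - \mathfrak{f}_i\mathfrak{f}_j)\,dt$ and bounded adapted drift $\alpha\,\mathfrak{f}_i(p_i - Y)$, with no contribution from mutation. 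Near the fixation set this admits a Lyapunov estimate with the entropy $V(\mathfrak{f}) := -\sum_{i=1}^n \mathfrak{f}_i\log\mathfrak{f}_i$ (with $0\log 0 := 0$): a direct generator computation gives $L^{\mathrm{res}}V = -\tfrac{n-1}{2}$, $L^{\mathrm{mut}}V = 0$ (as $V$ depends only on the conserved family masses) and $L^{\mathrm{sel}}_\alpha V = -\alpha\,\mathrm{Cov}_{\mathfrak{f}}(p,\log\mathfrak{f})$, and since $\mathrm{Var}_{\mathfrak{f}}(\log\mathfrak{f}) \to 0$ as $\mathfrak{f}$ approaches a vertex, there is a neighbourhood $U$ of $\{\phi_1 = 1\}$ with $L_\alpha V \le -c < 0$ on $U \setminus \{\phi_1 = 1\}$. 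By Dynkin's formula, started in $U$ the process either fixates or exits $U$ within expected time $\le \sup_U V / c$.

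For the complement I would exploit that $\Simpl^n \setminus U$ is compact and that the resampling dynamics alone push the family masses to the faces of the simplex and ultimately to a vertex: a support-theorem argument using the non-degeneracy of the resampling transverse to each face, together with the Feller property and compactness, yields constants $T_0, q > 0$ with $P_{\underline x}(X^n \text{ visits } U \text{ before time } T_0) \ge q$ for all $\underline x$, and a uniform lower bound $p_0 > 0$ on the probability of fixating before leaving $U$ once $U$ is entered. A renewal / strong-Markov argument then writes $S$ as a sum of a geometric (parameter $\gtrsim q p_0$) number of excursions of finite expected length, so $E[S] < \infty$ by Wald's identity. The main obstacle is exactly this last step: no single Lyapunov function yields $L_\alpha W \le -c < 0$ off the fixation set uniformly in the parameters, since the selection term $-\alpha\,\mathrm{Cov}_{\mathfrak{f}}(p,\log\mathfrak{f})$ of the entropy can overwhelm its resampling term $-\tfrac{n-1}{2}$ in the bulk once $\alpha$ is large relative to $n$; one is therefore forced into a region-splitting argument, where the delicate point is the uniform positivity of the hitting and fixation probabilities over the compact state space, extracted from the Feller property and the transverse non-degeneracy of the resampling.
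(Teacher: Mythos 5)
Your choice of $S$ as the fixation time of the family masses is the same stopping time the paper uses (there it is written $S^n=\inf\{t>0:\sum_{i=1}^n(X^n_t(i)+X^n_t(i+n))^2=1\}$), and your observations that this set is absorbing and forces $\phi_2=0$ are correct. The problem is that the entire content of the lemma is the integrability of $S$, and that is exactly the step you leave unproved. Your region-splitting argument hinges on two uniform estimates: $P_{\underline x}(X^n\text{ hits }U\text{ before }T_0)\ge q>0$ for all $\underline x\in\Simpl^n$, and a uniform fixation probability $p_0>0$ from inside $U$. The second can indeed be extracted from your Lyapunov bound (optional stopping applied to $V$ on $\{V<\eps\}$), but the first is asserted via "a support-theorem argument using the non-degeneracy of the resampling transverse to each face", and no such theorem is available off the shelf here: the diffusion matrix $\mathfrak{f}_i(\delta_{ij}-\mathfrak{f}_j)$ degenerates precisely on the faces of the simplex that the process must approach in order to fixate, its square root is not Lipschitz there, and the family masses $(\mathfrak{f}_i)_i$ are not even an autonomous diffusion (their drift $\alpha\,\mathfrak{f}_i(p_i-Y)$ depends on the internal type fractions). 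Since you yourself flag this as "the main obstacle", the proposal does not close the argument. Two smaller issues: $V$ is not $C^2$ up to the boundary of $\Simpl^n$, so Dynkin's formula needs a localization, and the absorption of $\{\phi_1=1\}$ should be argued for the full $2n$-dimensional process rather than by appeal to well-posedness alone.

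It is worth contrasting this with the paper's route, which avoids analysing the limit diffusion altogether: there one works with the finite-$N$ Moran approximation $X^{n,N}$, for which the fixation time $S^{n,N}$ is the time to the most recent common ancestor; the bound $\sup_N E[S^{n,N}]<\infty$ comes from comparing the number of ancestral lineages with a birth--death process with quadratic death rate (Proposition 6.9 in \cite{DGP12} together with Theorem 3.2 and Corollary 3.4 in \cite{Krone}), and $E[S^n]\le\liminf_N E[S^{n,N}]$ then follows from Skorohod representation, path continuity and Fatou's lemma. If you prefer to stay at the level of the diffusion, note that your own computation $L^{\mathrm{res}}V=-(n-1)/2$ already gives $E[\tau_{\mathrm{fix}}]\le 2\log n/(n-1)$ \emph{globally} in the neutral case; a Girsanov change of measure (Theorem 2 in \cite{DGP12}) would then transfer positivity of the relevant hitting probabilities to the selective case far more cheaply than a support theorem, and combined with lower semicontinuity of hitting probabilities and compactness would repair your excursion argument.
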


\begin{proof}
We will prove this Lemma in Section \ref{sec.p.key}. 
\end{proof}

We are now ready to prove our main result and split the proof in two steps. First we prove the dominance and then we show convergence when $\alpha \rightarrow\infty$. \\

\noindent {\bf Proof of Theorem \ref{thm.main} - stochastic dominance.} \\

We start by showing $ E[\phi_2(X^n_t)] \ge 0$ for all $t \ge 0$ and all $n$ large enough. 

\begin{lemma}\label{lem.t0}
One has $t_0 > 0$ for all $n$ large enough. 
\end{lemma}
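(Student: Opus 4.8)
The plan is to show that $E[\phi_2(X^n_t)]$ starts at a nonnegative value at $t=0$ and cannot immediately become negative, so that $t_0 := \inf\{t \ge 0 : E[\phi_2(X^n_t)] < 0\}$ is strictly positive. The starting point is the initial condition from Theorem \ref{thm.key}: at time $0$ (which plays the role of time $T-h$) every family has the same composition, namely $(Y^n_0(i), Z^n_0(i)) = (\bar Y_{T-h}/n, (1-\bar Y_{T-h})/n)$ for all $i$. For such a configuration $\underline{x}$, with $x_i + x_{i+n} = \mathfrak f_i = 1/n$ and $\sum_{j=1}^n x_j = \bar Y_{T-h}$, one computes directly that each summand in $\phi_2$ equals $\tfrac1n\bigl(x_i - \tfrac1n \bar Y_{T-h}\bigr) = \tfrac1n\bigl(\tfrac{\bar Y}{n} - \tfrac{\bar Y}{n}\bigr) = 0$, so $\phi_2(X^n_0) = 0$ and hence $E[\phi_2(X^n_0)] = 0$. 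Thus $t_0 \ge 0$ trivially; the content is to rule out $t_0 = 0$.

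Next I would use the generator identity from the preceding Lemma, namely $L_\alpha \phi_2(\underline x) = -(3+\vartheta_0+\vartheta_1-\alpha)\phi_2(\underline x) + \alpha \phi_3(\underline x)$, together with the fact that $X^n$ solves the martingale problem for $L_\alpha$, to write $g(t) := E[\phi_2(X^n_t)]$ in integrated form: $g(t) = -(3+\vartheta_0+\vartheta_1-\alpha)\int_0^t g(s)\,ds + \alpha \int_0^t E[\phi_3(X^n_s)]\,ds$, using $g(0)=0$. Since $\phi_3$ is a bounded continuous function on the compact set $\Simpl^n$ and $s \mapsto X^n_s$ has continuous paths, $s \mapsto E[\phi_3(X^n_s)]$ is continuous, and at $s=0$ one evaluates $\phi_3$ on the same symmetric initial configuration: there $x_i - (x_i+x_{i+n})\sum_j x_j = 0$ for every $i$ and $\phi_2 = 0$, so $\phi_3(X^n_0) = 0$ as well. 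Hence the nonlinear source term $\alpha E[\phi_3(X^n_s)]$ is continuous and vanishes at $s=0$.

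From here the argument is a Gronwall/bootstrap estimate near $t=0$. Because $g$ satisfies a linear integral equation with a forcing term that is $o(1)$ as $s \to 0$, one gets $|g(t)| \le C \int_0^t |g(s)|\,ds + o(t)$ on a small interval, which by Gronwall forces $g(t) = o(t)$; this alone is not quite enough to conclude $g \ge 0$, so I would instead look one order deeper. Differentiating (justified because all the relevant expectations of smooth functions of $X^n$ are differentiable, $X^n$ being Feller with generator $L_\alpha$), $g'(0) = L_\alpha\phi_2(X^n_0) = 0$ and $g''(0) = L_\alpha L_\alpha \phi_2(X^n_0)$; the dominant contribution as $n \to \infty$ should come from the resampling part $L^{\text{res}}$ acting on the quadratic-type function $\phi_3$, giving a strictly positive leading term of order independent of $n$ (the variance production from resampling), while the selection and mutation contributions are lower order. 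Concretely I expect $g''(0) = \alpha \cdot L^{\text{res}}\phi_3(X^n_0) + (\text{lower order})$ and $L^{\text{res}}\phi_3(X^n_0) > 0$ for the symmetric initial datum, so $g(t) = \tfrac12 g''(0) t^2 + o(t^2) > 0$ for small $t>0$, whence $t_0 > 0$; the phrase "for all $n$ large enough" enters because the competing terms must be controlled uniformly and the sign of the leading coefficient must stabilize.

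The main obstacle will be making the last step rigorous: one needs either an honest Taylor expansion of $t \mapsto E[\phi_2(X^n_t)]$ to second order with a remainder that is uniform in $n$, or a direct argument that the resampling-driven positivity of $\phi_3$ dominates the (possibly negative, when $\alpha$ is large) linear coefficient $-(3+\vartheta_0+\vartheta_1-\alpha)$ multiplying $\phi_2$. I would handle this by computing $L^{\text{res}}\phi_3$ and $L^{\text{sel}}_\alpha\phi_3$, $L^{\text{mut}}\phi_3$ explicitly at the symmetric configuration $x_i + x_{i+n} \equiv 1/n$, checking that $\phi_3(X^n_0)=0$ kills the linear term's contribution at order $t^2$ and that what survives is $\tfrac12 \alpha\, L^{\text{res}}\phi_3(X^n_0)\, t^2 \ge 0$ with the inequality strict unless $\bar Y_{T-h} \in \{0,1\}$, a null event under the Wright–Fisher equilibrium (or, for fixed finite $t$, of probability that does not obstruct $E[\phi_2(X^n_t)] \ge 0$). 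This local positivity at $t=0$, combined with the structure already set up via $\overline\tau_0, \underline\tau_0$ in the surrounding proof, is exactly what is needed to propagate $E[\phi_2(X^n_t)] \ge 0$ for all $t$.
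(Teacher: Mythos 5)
Your strategy is exactly the paper's: show $\phi_2(X^n_0)=0$, hence $g(0)=g'(0)=0$, and then establish $t_0>0$ from strict positivity of $g''(0)$ for large $n$. The one step you explicitly leave open --- the ``honest Taylor expansion to second order'' --- is precisely what the paper supplies: a direct generator calculation (carried out in the appendix) giving
\begin{equation*}
g''(0) = \alpha\, E[\bar Y_{T-h}(1-\bar Y_{T-h})] - \tfrac{\alpha}{n}\, E[\bar Y_{T-h}(1-\bar Y_{T-h})] - \tfrac{6\alpha^2}{n}\, E[\bar Y_{T-h}^2(1-\bar Y_{T-h})^2(4\bar Y_{T-h}-3)],
\end{equation*}
so the leading term is $O(1)$ and the competitors are $O(1/n)$, confirming your heuristic that the resampling-driven variance production dominates and that ``$n$ large enough'' is needed to beat the $1/n$ corrections. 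Your Gronwall detour is unnecessary (you abandon it yourself), and your treatment of the degenerate case is slightly off: what matters is $E[\bar Y_{T-h}(1-\bar Y_{T-h})]>0$, which holds whenever $P(\bar Y_{T-h}\notin\{0,1\})>0$; the only genuine exception is $\vartheta_0\vartheta_1=0$ with $\bar Y$ started in an absorbing state, where the paper observes $\phi_2(X^n_t)\equiv 0$ and hence $t_0=\infty$, which still satisfies the lemma. With the explicit computation filled in, your argument is the paper's proof.
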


\begin{proof}
Recall, $X^n_0 = X^{n,T-h}_0$ and the initial condition given in Theorem \ref{thm.key}. Then we have 
\begin{equation}
\begin{split}
\phi_2(X^n_{0}) = \sum_{i = 1}^n \frac{1}{n} \left(\frac{\bar Y_{T-h}}{n} - \frac{1}{n} \cdot \bar Y_{T-h} \right)= 0.
\end{split}
\end{equation}
Since 
\begin{align}
\frac{d}{dt}   [\phi_2(X^n_t)] &=  E[L_\alpha \phi_2 (X^n_t)],\\
\frac{d^2}{dt^2}  E[\phi_2(X^n_t)] &= E[L_\alpha(L_\alpha \phi_2)(X^n_t)],
\end{align}
 a generator calculation (see Appendix \ref{a.gen}) shows that 
\begin{equation}
\begin{split}
\frac{d}{dt}   E[\phi_2(X^n_t)]\Big|_{t = 0} = 0
\end{split}
\end{equation}
and
\begin{equation}
\begin{split}
\frac{d^2}{dt^2}  E[\phi_2(X^n_t)] \Big|_{t = 0} = \alpha E[&\bar Y_{T-h} (1-\bar Y_{T-h})] - \frac{\alpha }{n} ( E[\bar Y_{T-h} (1-\bar Y_{T-h})] )  \\
& - \frac{6\alpha^2}{n}  E[\bar Y_{T-h}^2 (1-\bar Y_{T-h})^2(4\bar Y_{T-h} - 3)] . 
\end{split}
\end{equation}
In the case, where $\vartheta_0\wedge \vartheta_1 > 0$, $\mathcal L(\bar Y_t)$ is absolutely continuous to the Lebesgue measure and therefore $ E[\bar Y_{T-h} (1-\bar Y_{T-h})] > 0$ which gives the result. \par 
In the case where either $\vartheta_0=0$ or $\vartheta_1=0$
and the process $\bar Y_t$ starts in its absorbing state, we have $\bar Y_t = \bar Y_0 \in \{0,1\}$ for all $t \ge 0$, which implies $X^n_t(i)\cdot X^n_t(i+n) = 0$ for all $i = 1,\ldots,n$. This gives   
$ E[\phi_2(X^n_t)] = 0$ for all $t \ge 0$ and therefore $t_0 = \infty$.  If $\bar Y_t$ does not start in its absorbing state, then
$P(\bar Y_t \not\in\{0,1\}) > 0$ and the result follows analogue to the above. 
\end{proof}

In order to show $ E[\phi_2(X^n_t)] \ge 0$, we abbreviate $g_t := \phi_2(X^n_t)$ and note that $ E[\phi_2(X^n_t)] \ge 0$ is equivalent to $t_0 = \infty$. Assume that $t_0 < \infty$. Then  $S_0 := S \vee t_0$ is an integrable stopping time and
\begin{equation}
\begin{split}
 E\left[g_{\overline{\tau}_0 \wedge \underline{\tau}_\eps \wedge S_0} - g_{t_0}\right] = 
 E\left[M^g_{\overline{\tau}_0 \wedge \underline{\tau}_\eps \wedge S_0} -M^g_{t_0}\right]  +  E\left[\int_{t_0}^{\overline{\tau}_0 \wedge \underline{\tau}_\eps \wedge S_0} L_\alpha g_s ds\right],  
\end{split}
\end{equation}
where
\begin{equation}
M^g_t := g_t - g_0 - \int_0^t L_\alpha\phi_2 (X^n_s) ds 
\end{equation}
is a continuous bounded martingale (see Proposition \ref{lem.WFMMS}).
Note that $M^g$ is also a martingale with respect to $\{\mathcal F_{t+}\}$ (see Remark 2.2.14 in \cite{EK86}) and by the optional sampling theorem (see Theorem 2.2.13 in \cite{EK86})  we get 
\begin{equation}
E\left[M^g_{\overline{\tau}_0 \wedge \underline{\tau}_\eps \wedge S_0}\right] =  E\left[M^g_{t_0}\right] = 0.
\end{equation}
It follows that 

\begin{equation}
\begin{split}
  E\left[g_{\overline{\tau}_0 \wedge \underline{\tau}_\eps \wedge S_0} - g_{t_0}\right] = 
  E\left[g_{\underline{\tau}_\eps } 1(\underline{\tau}_\eps \le \overline{\tau}_0  \wedge S_0)\right] 
= - \eps  P\left(\underline{\tau}_\eps \le \overline{\tau}_0  \wedge S_0\right)
\end{split}
\end{equation}
and 
\begin{equation}
\begin{split}
  E&\left[\int_{t_0}^{\overline{\tau}_0 \wedge \underline{\tau}_\eps \wedge S_0} L_\alpha g_s ds\right] \\
&=  E\left[\int_{t_0}^{\overline{\tau}_0 \wedge \underline{\tau}_\eps \wedge S_0} -(3+\vartheta_0+\vartheta_1-\alpha) \phi_2(X^n_s) + \alpha \phi_3(X^n_s) ds\right] \\
&\ge  E\left[\int_{t_0}^{\overline{\tau}_0 \wedge \underline{\tau}_\eps \wedge S_0} -(3+\vartheta_0+\vartheta_1-\alpha) \phi_2(X^n_s) \phantom{\sum_{i = 1}^n X^n_s(i)}\right.\\
&{}\hspace{5cm} \left.- 2 \alpha \phi_2(X^n_s)\cdot \sum_{i = 1}^n X^n_s(i) ds\right] \\
&=   E\left[\int_{t_0}^{\overline{\tau}_0 \wedge \underline{\tau}_\eps \wedge S_0} -(3+\vartheta_0+\vartheta_1+\alpha) \phi_2(X^n_s) \phantom{\left(1-\sum_{i = 1}^n X^n_s(i)\right)}\right.\\
&{}\hspace{5cm}\left.+ 2 \alpha \phi_2(X^n_s)\cdot \left(1-\sum_{i = 1}^n X^n_s(i)\right) ds\right] \\
&\ge  E\left[\int_{t_0}^{\overline{\tau}_0 \wedge \underline{\tau}_\eps \wedge S_0} 2 \alpha (-\eps) \cdot \left(1-\sum_{i = 1}^n X^n_s(i)\right) ds\right] \\
&\ge -\eps \cdot 2\alpha E\left[\overline{\tau}_0 \wedge \underline{\tau}_\eps \wedge S_0 - t_0\right]. 
\end{split}
\end{equation}
Therefore, by Lemma \ref{lem.sel.stop} and Lemma \ref{lem.sel.end}, 

\begin{equation}
\begin{split}
P\left(\underline{\tau}_0 < \overline{\tau}_0  \wedge S_0\right) &\le \lim_{\eps \downarrow 0} P\left(\underline{\tau}_\eps \le \overline{\tau}_0  \wedge S_0\right) \\
&\le 2\alpha \lim_{\eps \downarrow 0}  E\left[\overline{\tau}_0 \wedge \underline{\tau}_\eps \wedge S_0 - t_0\right] \\
&= 2 \alpha  E\left[\overline{\tau}_0 \wedge \underline{\tau}_0 \wedge S_0 - t_0\right]. 
\end{split}
\end{equation}
Now, by definition, $\overline{\tau}_0 \wedge \underline{\tau}_0 = t_0$ and hence 
\begin{equation}
\begin{split}
P\left(\underline{\tau}_0 < \overline{\tau}_0  \wedge S_0\right) &\le 2 \alpha E\left[t_0 \wedge (S \vee t_0) - t_0\right] = 0. 
\end{split}
\end{equation}
Note that by Lemma \ref{lem.sel.end} and the definition of $\underline{\tau}_0$ we have $\{S \le \underline{\tau}_0\} = \{\underline{\tau}_0 = \infty\}$. Hence, 

\begin{equation}
\begin{split}
P\left(\overline{\tau}_0 \le \underline{\tau}_0\right) &= 
P\left(\{\overline{\tau}_0 \le \underline{\tau}_0\} \cup \{\underline{\tau}_0 = \infty\}\right) \\
&= P\left(\{\overline{\tau}_0 \le \underline{\tau}_0\} \cup \{S \le \underline{\tau}_0\}\right) \\
&= P\left(\overline{\tau}_0  \wedge (S \vee t_0) \le \underline{\tau}_0\right) = 1.
\end{split}
\end{equation}
This gives $ \phi_2(X^n_{t_0}) \ge 0$ almost surely and therefore, since $ E[\phi_2(X^n_{t_0})] = 0$, $ \phi_2(X^n_{t_0}) = 0$ almost surely. 

\begin{lemma}
Let $\vartheta_0 \wedge \vartheta_1 > 0$, then $P(\phi_2(X^n_{t}) = x) = 0$ for all $x \in \mathbb R$ and all $t > 0$. In the 
case, where $\vartheta_0 = 0$ or $\vartheta_1 = 0$, we either have $P(\phi_2(X^n_{t}) = 0) = 1 $ for all $t > 0$ or 
 $P(\phi_2(X^n_{t}) = 0) < 1 $ for all $t > 0$. 
\end{lemma}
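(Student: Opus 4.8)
The idea is to stratify $\Simpl^n$ according to which families have gone extinct, to use local ellipticity of the resampling part on each stratum to obtain absolute continuity of the time-$t$ law, and then to exploit that $\phi_2$ restricts to a non-constant polynomial on every stratum in which at least two families survive. Write $\mathfrak f_i := X^n_t(i)+X^n_t(i+n)$ for the family masses and $S_t := \{i : \mathfrak f_i(t)>0\}$ for the surviving set. First I would record the structural facts: the selection and mutation drifts of $L_\alpha$ vanish on $\{\mathfrak f_i=0\}$, so once a family dies it stays dead and $S_t$ is a.s.\ non-increasing; when $\vartheta_0\wedge\vartheta_1>0$ the internal fit-fraction $p_i := X^n_t(i)/\mathfrak f_i$ of a surviving family satisfies a Wright--Fisher--type SDE with mutation drift $\vartheta_0(1-p_i)-\vartheta_1 p_i$, so (Feller's test) $p_i$ never reaches $0$ or $1$; and, again when $\vartheta_0\wedge\vartheta_1>0$, $\bar Y_{T-h}$ has a Lebesgue density on $[0,1]$, so the initial point $X^n_0$ lies in the interior of $\Simpl^n$ almost surely.

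Next I would fix $t>0$ and decompose $\{X^n_t\in\cdot\}$ over the faces $F_S := \{\underline x\in\Simpl^n : \mathfrak f_i=0\ \forall i\notin S\}$, $S\subseteq\{1,\dots,n\}$. On $\{S_t=S\}$ one has $\sigma_S := \inf\{s : S_s=S\}\le t$, and by the strong Markov property at $\sigma_S$ the shifted process is a $2|S|$-type Wright--Fisher diffusion with mutation and selection started in the relative interior of $F_S$, whose resampling matrix $x_k(\delta_{k\ell}-x_\ell)$ is uniformly elliptic on every compact subset of $\mathrm{relint}\,F_S$. Classical interior parabolic regularity (localized by stopping the process before it leaves such a compact subset) then shows that, for any elapsed time $t-\sigma_S>0$, the conditional law of $X^n_t$ restricted to $\mathrm{relint}\,F_S$ is absolutely continuous with respect to Lebesgue measure on the affine hull of $F_S$; averaging over $\sigma_S$ and $X^n_{\sigma_S}$ gives $P(X^n_t\in A,\,S_t=S)=0$ whenever $A\subseteq\mathrm{relint}\,F_S$ is Lebesgue-null in $F_S$.

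On $F_S$ one has $\phi_2(\underline x)=\sum_{i\in S}\mathfrak f_i\big(x_i-\mathfrak f_i\sum_{j\in S}x_j\big)$, a polynomial in the coordinates of $F_S$. For $|S|\ge 2$ it is non-constant (two surviving families of unequal mass and suitable internal types give $\phi_2\ne0$, whereas $\phi_2=0$ when one family carries all the mass), so $\{\phi_2=x\}\cap F_S$ is Lebesgue-null in $F_S$ for every $x$, and with the previous step $P(\phi_2(X^n_t)=x,\,S_t=S)=0$ for all $x$ and all $|S|\ge 2$. For $|S|=1$, say $S=\{i\}$, we have $\mathfrak f_i=1$ and $\sum_{j\in S}x_j=x_i$, so $\phi_2\equiv0$ on $F_S$: hence $P(\phi_2(X^n_t)=x,|S_t|=1)=0$ for $x\ne0$, while $P(\phi_2(X^n_t)=0,|S_t|=1)=P(|S_t|=1)$. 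Summing over $S$: $P(\phi_2(X^n_t)=x)=0$ for every $x\ne0$, and $P(\phi_2(X^n_t)=0)=P(|S_t|=1)$, which is $<1$ since — by a support theorem for the bounded-coefficient family-mass SDE, or again by interior regularity — the path $(\mathfrak f(s))_{s\le t}$ stays in a small interior ball around $(1/n,\dots,1/n)$ with positive probability, on which all $n$ families survive at time $t$. Thus $P(\phi_2(X^n_t)=0)<1$ for all $t>0$. (A genuine atom at $0$ is present whenever $P(|S_t|=1)>0$, so the operative content here is ``$P(\phi_2(X^n_t)=x)=0$ for $x\ne0$ together with $P(\phi_2(X^n_t)=0)<1$'', which is what the proof of Theorem~\ref{thm.main} invokes.)

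For the degenerate case $\vartheta_0=0$ or $\vartheta_1=0$ the same scheme runs conditionally: if $\bar Y_{T-h}$ is a.s.\ an absorbing state of $\bar Y$ (i.e.\ a.s.\ in $\{0,1\}$) then $X^n_t(i)X^n_t(i+n)=0$ for all $i$ and all $t$, whence $\phi_2(X^n_t)=0$ a.s., the first alternative; otherwise $P(\bar Y_{T-h}\in(0,1))>0$, conditioning on this event places $X^n_0$ in the interior, and the stratification argument (now internal fractions may touch $0$ or $1$, but one still finds a positive-probability face on which $\phi_2$ is non-constant and the time-$t$ law is absolutely continuous) yields $P(\phi_2(X^n_t)\ne0)>0$ for every $t>0$ — and since which alternative occurs depends only on the law of $X^n_0$ it is uniform in $t$. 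The step I expect to be the main obstacle is precisely the absolute-continuity claim on each stratum: transferring interior Gaussian-type bounds for the Wright--Fisher resampling operator — only locally uniformly elliptic in the relative interior of a face and fully degenerate on its boundary — to the fixed time $t$ by localization/stopping (or, alternatively, citing a known smoothness result for transition densities of multi-type Wright--Fisher diffusions). Everything else — the monotonicity of $S_t$, accessibility of the boundary for the family-mass marginals, non-vanishing of $\phi_2|_{F_S}$ for $|S|\ge2$, and positivity of the no-extinction probability up to time $t$ — is elementary.
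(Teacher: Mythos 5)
Your route is genuinely different from the paper's, and more careful. The paper disposes of the first claim in one line --- ``$\mathcal L(X^n_t)$ is absolutely continuous with respect to Lebesgue measure'' --- so that the level sets of the non-constant polynomial $\phi_2$ are null; the second claim is handled by distinguishing whether $\bar Y$ starts in an absorbing state. Your stratification by the surviving set $S_t$ exposes why that one line cannot be taken literally: the family mass $\mathfrak f_i=X^n_t(i)+X^n_t(i+n)$ is a diffusion with local variance $\mathfrak f_i(1-\mathfrak f_i)$ and drift of order $O(\mathfrak f_i)$ (the mutation terms cancel in the sum), so $0$ is an accessible absorbing boundary, families die in finite time with positive probability for every $t>0$, and $\mathcal L(X^n_t)$ charges the lower-dimensional faces of $\Simpl^n$. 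Worse, on the fixation event $|S_t|=1$ one has $\phi_2\equiv 0$, and $P(|S_t|=1)>0$ for every $t>0$ (e.g.\ by moment duality with the Kingman coalescent, which comes down from infinity), so the lemma's first assertion is actually false at $x=0$: $\phi_2(X^n_t)$ has an atom there. You are right that what the proof of Theorem \ref{thm.main} really uses is only that $P(\phi_2(X^n_{t_0})=0)<1$ for deterministic $t_0\in(0,\infty)$, to contradict $\phi_2(X^n_{t_0})=0$ a.s.; your argument --- conditional absolute continuity on each face $F_S$ with $|S|\ge 2$, non-constancy of $\phi_2|_{F_S}$ there, $\phi_2\equiv 0$ on the faces with $|S|=1$, and positivity of $P(|S_t|\ge 2)$ --- delivers exactly that, together with $P(\phi_2(X^n_t)=x)=0$ for $x\neq 0$. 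So your proposal does not merely reprove the lemma by other means; it corrects both its statement and its proof, while preserving everything the main theorem needs.

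Two caveats on your side. The step you flag yourself, absolute continuity of the conditional law of $X^n_t$ on the relative interior of each stratum, is the only substantive input and is currently a stated goal rather than a proof; it does need either a localization/hypoellipticity argument for the Wright--Fisher operator away from the boundary of the face, or a Girsanov reduction to the neutral multi-type case, where the density of the non-fixed part of the law is classical. Second, the assertion that the internal fit-fractions $p_i$ ``never reach $0$ or $1$'' when $\vartheta_0\wedge\vartheta_1>0$ is incorrect for small mutation rates (the boundary $0$ is regular when $2\vartheta_0<1$); what you need, and what is true, is only that $P(p_i(t)\in\{0,1\})=0$ for each fixed $t>0$, so this does not affect the rest of the argument.
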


\begin{proof}
The first part follows by the fact that $\mathcal L(X^n_t)$ is absolutely continuous to the Lebesgue measure. The second part follows similarly since $P(\bar Y_t \in \{0,1\}) < 1$, with the one exception, that when the Wright-Fisher diffusion $\bar Y_t$ starts in its absorbing state, then 
$\bar Y_t = \bar Y_0 \in \{0,1\}$ for all $t \ge 0$ which implies $P(\phi_2(X^n_{t}) = 0) = 1 $ for all $t > 0$.   
\end{proof}

In view of the above Lemma $ \phi_2(X^n_{t_0}) = 0$ almost surely is only possible if either $t_0 = 0$, which would contradict Lemma \ref{lem.t0}, or $t \mapsto E[\phi_2(X^n_{t})] \equiv 0$. Hence $t_0 = \infty$ and therefore 
\begin{equation}
  E[\phi_2(X^n_{t})] \ge 0 \qquad \text{for all } t \ge 0 \text{ and all } n \text{ sufficiently large}. 
\end{equation}

Recall that 
\begin{equation}
(X^{n}_0(i) + X_{0}^{n}(i+n)) = \frac{1}{n}.
\end{equation}
If we now write $X^{n,\alpha}_{t}$ in order to indicate the dependence on the selection parameter $\alpha \ge 0$, then 
\begin{equation}
\frac{d}{dt} E[\phi_1(X_t^{n,\alpha})] = 1-   E[\phi_1(X_t^{n,\alpha})] + 2\alpha  E[\phi_2(X_t^{n,\alpha})] , 
\end{equation}
which implies 
\begin{equation}
\begin{split}
  E[\phi_1(X_t^{n,\alpha})]  &= \frac{1}{n} e^{- t} + ( 1-e^{-t}) +  2\alpha e^{-t} \int_0^t e^{s}  E[\phi_2(X_s^{n,\alpha})]ds \\
&\ge \frac{1}{n} e^{- t} + ( 1-e^{-t})  = E[\phi_1(X_t^{n,0})],
\end{split}
\end{equation}
for all $n$ sufficiently large. We can now apply Theorem \ref{thm.key} to get

\begin{equation}
\begin{split}
P(R^\alpha_{T} \le h)&= \lim_{n \rightarrow \infty}  E[\phi_1(X^{n,T-h,\alpha}_{h})]  \\
&\ge 1-e^{-h} \\
&= \lim_{n \rightarrow \infty}  E [\phi_1(X^{n,T-h,0}_{h})]  = P(R^0_{T} \le h),
\end{split}
\end{equation}
for all $h < T$. Since $P(R^\alpha_{T} \le h) = P(R^0_{T} \le h) = 1$ for all $h \ge T$ a classical result of Strassen (see \cite{L99} or \cite{Strassen}) gives the stochastic dominance. \\

\noindent {\bf Proof of Theorem \ref{thm.main} for $\alpha \rightarrow \infty$.} \\

Recall that the total number of fit types $\bar Y_T = \sum_{i = 1}^n X_h^{n,T-h}(i)$ is a Wright-Fisher diffusion  with 
generator 
\begin{equation}
Gf(x) := \frac{1}{2} x(1-x) \frac{\partial^2}{\partial x^2} f(x) + (-\vartheta_1 x + \vartheta_0 (1-x) + \alpha x (1-x))  \frac{\partial}{\partial x} f(x).
\end{equation}
Moreover, recall that, by Remark \ref{r.WF.equi}, the stationary distribution has the following density: 
\begin{equation}
\rho(x) = \frac{x^{2\vartheta_0 - 1} (1-x)^{2\vartheta_1 - 1} \exp(2 \alpha x)}{\int_0^1 x^{2\vartheta_0 - 1} (1-x)^{2\vartheta_1 - 1} \exp(2 \alpha x) dx}.
\end{equation}

\begin{lemma}\label{l.WF.moments}
Let $\bar Y$ be a $[0,1]$-valued random variable with density $\rho$, where we assume that $\vartheta_0 = \vartheta_1 =: \vartheta$. 
Then
\begin{align}
\alpha \cdot E[1-\bar Y] &\rightarrow \vartheta,\\
\alpha^2 \cdot E[(1-\bar Y)^2] &\rightarrow \vartheta^2+ \frac{1}{2} \vartheta,
\end{align}
when $\alpha \rightarrow \infty$. In fact, we have $\alpha\cdot E[(1-\bar Y)^2] \rightarrow 0$.
\end{lemma}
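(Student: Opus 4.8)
\textbf{Proof proposal for Lemma \ref{l.WF.moments}.}

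The plan is to compute the asymptotics of the moments $E[(1-\bar Y)^k]$ directly from the explicit stationary density $\rho$, using a Laplace-type (Watson's lemma) analysis of the defining integrals as $\alpha \to \infty$. Substituting $v = 1-x$, the normalizing constant is
\begin{equation}
C_\alpha := \int_0^1 x^{2\vartheta-1}(1-x)^{2\vartheta-1} e^{2\alpha x}\,dx = e^{2\alpha}\int_0^1 v^{2\vartheta-1}(1-v)^{2\vartheta-1} e^{-2\alpha v}\,dv,
\end{equation}
and more generally, for an integer $k \ge 0$,
\begin{equation}
E[(1-\bar Y)^k] = \frac{1}{C_\alpha}\int_0^1 (1-x)^k x^{2\vartheta-1}(1-x)^{2\vartheta-1}e^{2\alpha x}\,dx = \frac{\int_0^1 v^{k+2\vartheta-1}(1-v)^{2\vartheta-1}e^{-2\alpha v}\,dv}{\int_0^1 v^{2\vartheta-1}(1-v)^{2\vartheta-1}e^{-2\alpha v}\,dv}.
\end{equation}
The dominant contribution to each such integral comes from $v$ near $0$, where the factor $(1-v)^{2\vartheta-1}$ is smooth and equals $1 + O(v)$. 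Expanding $(1-v)^{2\vartheta-1} = 1 + (1-2\vartheta)v + O(v^2)$ and integrating term by term against $v^{\beta-1}e^{-2\alpha v}$ over $(0,\infty)$ (the error from extending the upper limit from $1$ to $\infty$ is exponentially small), each integral $\int_0^\infty v^{\beta-1}e^{-2\alpha v}\,dv = \Gamma(\beta)(2\alpha)^{-\beta}$.

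Carrying this out: the denominator behaves like $\Gamma(2\vartheta)(2\alpha)^{-2\vartheta}(1 + O(\alpha^{-1}))$, while the numerator with the extra $v^k$ behaves like $\Gamma(k+2\vartheta)(2\alpha)^{-(k+2\vartheta)}(1 + O(\alpha^{-1}))$. Hence
\begin{equation}
E[(1-\bar Y)^k] = \frac{\Gamma(k+2\vartheta)}{\Gamma(2\vartheta)}(2\alpha)^{-k}\bigl(1 + O(\alpha^{-1})\bigr).
\end{equation}
For $k=1$ this gives $E[1-\bar Y] = \frac{\Gamma(1+2\vartheta)}{\Gamma(2\vartheta)}(2\alpha)^{-1}(1+O(\alpha^{-1})) = 2\vartheta\cdot(2\alpha)^{-1}(1+O(\alpha^{-1})) = \vartheta/\alpha + O(\alpha^{-2})$, so $\alpha E[1-\bar Y] \to \vartheta$. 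For $k=2$ it gives $E[(1-\bar Y)^2] = \frac{\Gamma(2+2\vartheta)}{\Gamma(2\vartheta)}(2\alpha)^{-2}(1+O(\alpha^{-1})) = 2\vartheta(2\vartheta+1)(2\alpha)^{-2}(1+O(\alpha^{-1}))$, so $\alpha^2 E[(1-\bar Y)^2] \to \tfrac14\cdot 2\vartheta(2\vartheta+1) = \vartheta^2 + \tfrac12\vartheta$, and $\alpha E[(1-\bar Y)^2] = O(\alpha^{-1}) \to 0$.

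The only genuinely delicate point is justifying the Laplace/Watson's lemma asymptotics uniformly enough to read off the leading constant and to control that the relative error is $O(\alpha^{-1})$ and not merely $o(1)$; since the exponent parameters $2\vartheta-1$ may be negative (when $\vartheta < 1/2$) the integrands are singular at $v=0$, but they remain integrable and the standard Watson's lemma for integrals $\int_0^1 v^{\beta-1}g(v)e^{-2\alpha v}\,dv$ with $\beta>0$ and $g$ smooth at $0$ applies verbatim. I expect this to be the main (though routine) obstacle; everything else is bookkeeping with Gamma functions. An alternative, fully self-contained route avoiding Watson's lemma is to note that $W := 2\alpha(1-\bar Y)$ converges in distribution to a $\mathrm{Gamma}(2\vartheta,1)$ random variable and that the families $\{W\}$ and $\{W^2\}$ are uniformly integrable (the latter by a direct bound on $E[W^3]$ via the same integral estimates), which then yields convergence of the first two moments of $W$ to $2\vartheta$ and $2\vartheta(2\vartheta+1)$ respectively; rescaling gives the claim.
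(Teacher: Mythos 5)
Your proof is correct, and it takes a genuinely different --- and in fact more complete --- route than the paper. The paper's proof computes the two moments in closed form only for $\vartheta=1/2$ (obtaining $E[1-\bar Y]=\tfrac12(\alpha^{-1}-2(e^{2\alpha}-1)^{-1})$ and the analogous formula for the second moment, from which the limits are read off directly) and for general $\vartheta$ merely remarks that ``one can use numerical software such as MAPLE,'' which is not a proof. Your Watson's-lemma argument handles all $\vartheta>0$ uniformly: after the substitution $v=1-x$ the moments become ratios of integrals $\int_0^1 v^{\beta-1}(1-v)^{2\vartheta-1}e^{-2\alpha v}\,dv$, the contribution from $v\in[\tfrac12,1]$ (where the factor $(1-v)^{2\vartheta-1}$ may be singular but is integrable) is $O(e^{-\alpha})$ and hence negligible against the algebraic leading term $\Gamma(\beta)(2\alpha)^{-\beta}$, and on $[0,\tfrac12]$ the standard expansion applies since $(1-v)^{2\vartheta-1}$ is smooth there. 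The resulting constants $\Gamma(k+2\vartheta)/\Gamma(2\vartheta)\cdot(2\alpha)^{-k}$ reproduce exactly the limits $\vartheta$ and $\vartheta^2+\tfrac12\vartheta$ claimed in the lemma (and agree with the paper's explicit $\vartheta=1/2$ formulas). One small remark: for the statement of the lemma only the leading-order asymptotics are needed, so your concern about certifying a relative error of $O(\alpha^{-1})$ rather than $o(1)$ is unnecessary; the crude splitting above already suffices. Your alternative route via $2\alpha(1-\bar Y)\Rightarrow\mathrm{Gamma}(2\vartheta,1)$ plus uniform integrability is also sound and would in addition settle the generalization to $\vartheta_0\neq\vartheta_1$ flagged in Remark \ref{r.gen.mut}, since the same local analysis at $v=0$ only sees the exponent $2\vartheta_1-1$ there.
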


\begin{proof}
One can easily see that for $\vartheta = 1/2$,
\begin{align}
E[1-\bar Y] &= \frac{1}{2}\left(\frac{1}{\alpha} - \frac{2}{e^{2\alpha}-1}\right),\\
E[(1-\bar Y)^2] &= \frac{1}{2}\left(\frac{1}{\alpha^2} - \frac{2(\alpha+1)}{\alpha(e^{2\alpha}-1)}\right).
\end{align}
For arbitrary $\vartheta$ one can use numerical software such as MAPLE in order to show the result. 
\end{proof}

\begin{remark}\label{r.gen.mut}
In order to generalize the result to arbitrary mutation parameters $\vartheta_0,\vartheta_1$ one needs to verify $\alpha E[(1-\bar Y)^2] \rightarrow 0$, when $\alpha \rightarrow \infty$.
\end{remark}

\noindent Now observe that 
{\small
\begin{equation}
\begin{split}
\phi_2&(X^n_t) \\
&=  \sum_{i = 1}^n \left(X^n_t(i)+X^n_t(i+n)\right)\left(X^n_t(i)-\left(X^n_t(i)+X^n_t(i+n)\right) \sum_{j = 1}^n X^n_t(j)\right) \\
&\le  \left(1 - \sum_{j = 1}^n X^n_t(j)\right) \cdot \left(\sum_{i = 1}^n X^n_t(i)+X^n_t(i+n)\right)^2 \\
&= 1 - \sum_{j = 1}^n X^n_t(j) \stackrel{d}{=} 1-\bar Y_T
\end{split}
\end{equation}
and 
\begin{equation}
\begin{split}
\sum_{i = 1}^n  &\left(X^n_t(i)-(X^n_t(i)+X^n_t(i+n)) \sum_{j = 1}^n X^n_t(j)\right)^2 \\
&= \sum_{i = 1}^n  \left(1\Big(X^n_t(i)\ge (X^n_t(i)+X^n_t(i+n)) \sum_{j = 1}^n X^n_t(j)) \right. \\
&{}\hspace{2cm}\left. + 1(X^n_t(i)<(X^n_t(i)+X^n_t(i+n)) \sum_{j = 1}^n X^n_t(j)\Big) \right) \cdot \\
&{}\hspace{4cm}\cdot\left(X^n_t(i)-(X^n_t(i)+X^n_t(i+n)) \sum_{j = 1}^n X^n_t(j)\right)^2 \\
&\le \left(1 - \sum_{j = 1}^n X^n_t(j)\right)^2\cdot \sum_{i = 1}^n \left(X^n_t(i)+X^n_t(i+n)\right)^2  
 + \sum_{i = 1}^n X^n_t(i+n)^2 \\
&\le  \left(1 - \sum_{j = 1}^n X^n_t(j)\right)^2 + \left(\sum_{i = 1}^n X^n_t(i+n)\right)^2  \\
&\stackrel{d}{=} 2 \left(1-\bar Y_T\right)^2. 
\end{split}
\end{equation}
Note that $E[\left(1-\bar Y_T\right)^2]$ is independent of $T$ and given in Lemma \ref{l.WF.moments}. 
If we denote by $f_\alpha, g_\alpha$ the solutions of the equations
\begin{align}
\frac{d}{dt}  f_\alpha(t) &= 1-  f_\alpha(t) + 2\alpha g_\alpha(t),\qquad f_\alpha(0) = \frac{1}{n}, \\
\frac{d}{dt}  g_\alpha(t) &= -(3+2\vartheta +\alpha) g_\alpha(t) + 4 \alpha E[\left(1-\bar Y_0\right)^2],\qquad g_\alpha(0) =0, 
\end{align}
then 

\begin{equation}
\lim_{n \rightarrow \infty} f_\alpha(h) = (1-e^{-h}) + 4 (\vartheta + 2 \vartheta^2)\frac{1}{\alpha} (1-e^{-h}) + \mathcal O(1/\alpha^2),
\end{equation}
for $\alpha \rightarrow \infty$.  Hence we have

\begin{equation}
\begin{split}
\lim_{\alpha \rightarrow \infty} \lim_{n \rightarrow \infty} E_\alpha[\phi_1(X^n_h)] &\le \lim_{\alpha \rightarrow \infty} \lim_{n \rightarrow \infty} f_\alpha(h) \\
&= \lim_{\alpha \rightarrow \infty}  \left(1- e^{-h} +4 \frac{1}{\alpha} (1-e^{-h})\right) \\
&= 1- e^{-h} 
\end{split}
\end{equation}
and the result follows analogue to the first step combined with the domination result.

\subsection{Proof of the key observation}

Here we prove our main tool. In Section \ref{sec.finite.population} we show how to relate a measure-valued process with 
the genealogical distance. Next, in Section \ref{sec.limit}, we give the large population limit of this measure-valued process.
The characterization of this limit will then be used in Section \ref{sec.p.key} to prove the result. 

\subsubsection{Family dynamic of the finite population model}\label{sec.finite.population}

We start with the notion of descendents.\\

\noindent{\bf Descendents}\\ 

\noindent	Let $M \subset I_N$, $0 \le h \le T$ and define 
	\begin{equation}
	D_{h,T}(M):= \{i \in I_N:\ A_h(i,T) \in M\}.
	\end{equation}
	We call $D_{h,T}(M)$ the set of {\it descendants} at time $T$ of the individuals in $M$ that lived at time $h$. 
	We abbreviate $D_{h,T}(x):=D_{h,T}(\{x\})$ and write $\mathcal A_{h,T}:= \{i \in I_N: D_{h,T}(i) \neq \emptyset\} $ for the set of ancestors at time $T-h$ (measured backward).  
	We note that $ D_{h,T}(i) = B^{r_{T}}_{T-h}(j) = \{k \in I_N:\ r_T(k,j) \le T-h\}$ is a closed ball of radius $T-h$ for all $i \in I_N$ and $j \in  D_{h,T}(i)$. It follows that 
\begin{equation}
I_N = \biguplus_{i \in \mathcal A_{h,T}}  D_{h,T}(i)= \biguplus_{i \in I_N}  D_{h,T}(i)
\end{equation}
is the disjoint union of closed  balls (with respect to $r_{T}$) with radius $T-h$. \\

\noindent{\bf The model under neutrality} \\

\noindent	We define for $0\le t,T$
\begin{equation}
\begin{split}
\mathcal X_t^{N,T}&:= \sum_{i \in \mathcal A_{T,T+t}} \frac{|D_{T,T+t}(i)|}{N} \delta_{\kappa_i} = \sum_{i \in I_N} \frac{|D_{T,T+t}(i)|}{N} \delta_{\kappa_i},
\end{split}
\end{equation}
where we assume in the following 
\begin{itemize}
\item[] {\it $(\kappa_i)_{i \in \mathbb N}$ are i.i.d. uniformly distributed random variables also independent of the random mechanisms given in Section \ref{sec.construction}.}
\end{itemize}
Then 
\begin{equation}
\mathcal X_0^{N,T} = \frac{1}{N} \sum_{i \in I_N}  \delta_{\kappa_i} 
\end{equation}
and $(X_t^{N,T})_{t \ge 0}$ has the following dynamic:
Define $\xi_i(t):= \kappa_{A_T(T+t,i)}$, $i \in I_N$. Then $t \mapsto (\xi_i(t))_{i \in I_N}$ evolves as follows. 
	If $\eta^{i,j}_{\text{res}}(\{t+T\}) = 1$, then $A_T(j,T+t) = A_{T}(i,(T+t)-)$ and hence 
	\begin{equation}
  \xi_k(t) \rightarrow \left\{\begin{array}{ll}
	 \xi_i(t),&\quad \text{if } k = j,\\
	 \xi_k(t),&\quad \text{if } k\neq j. 
	\end{array}\right.
	\end{equation}
Therefore,
	\begin{equation}
	\begin{split}
  \frac{1}{N}\sum_{i \in I_N} &\delta_{\xi_i(t)}  =\sum_{i \in I_N} \sum_{j \in D_{T,T+t}(i)} \frac{1}{N} \delta_{\xi_j(t)} \\
	&=\sum_{i \in I_N} \sum_{j \in D_{T,T+t}(i)} \frac{1}{N}  \delta_{\kappa_i} = \mathcal X_t^{N,T}
	\end{split}
	\end{equation}
	satisfies the definition of a measure-valued Moran model (see for example \cite{dawson} or \cite{EK93}). Recall that the generator $\tilde \Omega^N$ of the measure-valued Moran model 
	is defined on the set of {\it polynomials} $\Pi$, where we call a function a polynomial if it is the linear combinations of functions of the form 
	\begin{equation}\label{eq.polyn}
\Phi = \langle \phi ,\mu^m\rangle := \int \phi \ d\mu^{\otimes m}, 
\end{equation} 
where $\phi:[0,1]^m \to [0,1]^m$ is continuous, $m \in \mathbb N$. 
\begin{remark}
Note that by the Stone-Weierstrass theorem $\Pi$ is dense in $C(\mathcal M_1([0,1]))$, when $\mathcal M_1([0,1])$ is equipped with the weak topology. 
 \end{remark}
For such functions $\Phi$ the operator $\tilde \Omega^N$ is given by 
\begin{equation}\label{eq.generator.neutral}
\tilde \Omega^N\Phi(\mu):=\Omega^N_{\text{res}} \Phi(\mu) := \binom{N}{2} \int_{[0,1]} \int_{[0,1]} \left( \Phi(\mu^{u,v}) - \Phi(\mu)\right) \mu(du) \mu(dv), 
\end{equation}
where 
\begin{equation}
\mu^{u,v} = \mu + \frac{1}{N} \delta_{u}- \frac{1}{N} \delta_{v}.
\end{equation}

\noindent{\bf Adding selection} \\

\noindent In order to describe the model with mutation and selection, we define the process $(\underline{Y},\underline{Z})= ((\underline{Y}_t,\underline{Z}_t))_{t \ge 0} = ((\underline{Y}_t^N,\underline{Z}_t^N))_{t \ge 0}$, where we interpret $Y_t(i)$ ($Z_t(i)$) as the relative number of fit (unfit) descends at time $t$ of individual $i$. Then $X_t^{N,T}(i) := Y_t(i) + Z_t(i)$ is the size of the $i$th family, i.e. the quantity we are interested in.  Observe that $(\underline{Y},\underline{Z})$  has the following dynamic: \\

At rate $\binom{N}{2}$ we have four different transitions (let $e_i(j) = 0$ for $j \neq i$ and $e_i(i) = 1$): 
\begin{equation}
(\underline{Y}_t,\underline{Z}_t) \rightarrow \left(\underline{Y}_t + \frac{1}{N}e_i  ,\underline{Z}_t- \frac{1}{N}e_j\right)
\end{equation}
with probability $Y_t(i)\cdot Z_t(j) $ (i.e. an unfit descendant $j$ is replaced by a fit descendant of $i$ due to resampling),
\begin{equation}
(\underline{Y}_t,\underline{Z}_t) \rightarrow \left(\underline{Y}_t + \frac{1}{N}e_i - \frac{1}{N}e_j ,\underline{Z}_t\right)
\end{equation}
with probability $Y_t(i)\cdot Y_t(j)$ (i.e. a fit descendant $j$ is replaced by a fit descendant of $i$ due to resampling),
\begin{equation}
(\underline{Y}_t,\underline{Z}_t) \rightarrow \left(\underline{Y}_t - \frac{1}{N}e_j ,\underline{Z}_t+\frac{1}{N} e_i\right)
\end{equation}
with probability $Z_t(i)\cdot Y_t(j)$ (i.e. a fit descendant $j$ is replaced by an unfit descendant of $i$ due to resampling),
\begin{equation}
(\underline{Y}_t,\underline{Z}_t) \rightarrow \left(\underline{Y}_t ,\underline{Z}_t+ \frac{1}{N}e_i - \frac{1}{N}e_j \right)
\end{equation}
with probability $Z_t(i)\cdot Z_t(j)$ (i.e. an unfit descendant $j$ is replaced by an unfit descendant of $i$ due to resampling). \\
 
At rate $\frac{\alpha}{N} \binom{N}{2}$ we have 
\begin{equation}
(\underline{Y}_t,\underline{Z}_t) \rightarrow \left(\underline{Y}_t + \frac{1}{N}e_i  ,\underline{Z}_t - \frac{1}{N}e_j\right)
\end{equation}
with probability $Y_t(i)\cdot Z_t(j)$ (i.e. an unfit descendant $j$ is replaced by a fit descendant of $i$ due to selection),
\begin{equation}
(\underline{Y}_t,\underline{Z}_t) \rightarrow \left(\underline{Y}_t + \frac{1}{N}e_i - \frac{1}{N}e_j ,\underline{Z}_t\right)
\end{equation}
with probability $Y_t(i)\cdot Y_t(j)$ (i.e. a fit descendant $j$ is replaced by a fit descendant of $i$ due to selection). \\
 
At rate $\vartheta_0$ we have 
\begin{equation}
(\underline{Y}_t,\underline{Z}_t) \rightarrow \left(\underline{Y}_t+\frac{1}{N} e_i,\underline{Z}_t-\frac{1}{N} e_i\right)
\end{equation}
with probability $Z_t(i)$ (i.e. mutation from an unfit descendent of $i$ to a fit descendent) and with rate $\vartheta_1$ we have
\begin{equation}
(\underline{Y}_t,\underline{Z}_t) \rightarrow \left(\underline{Y}_t-\frac{1}{N} e_i,\underline{Z}_t+\frac{1}{N} e_i\right)
\end{equation}
with probability $Y_t(i)$ (i.e. mutation from a fit descendent of $i$ to an unfit descendent). \\

Note that $(\underline{Y},\underline{Z})$ is a $(2N-1)$-dimensional Wright-Fisher model: 

\begin{lemma}\label{lem.WFMS.finite}
Let $\phi:[0,1]^N\times [0,1]^N\to \mathbb R$ be continuous. Then the generator $L^N$ of the Markov jump process 
$\underline{X}^N$ with $X_t^N(i) = Y^N_t(i)$, $X_t^N(i+N) = Z^N_t(i)$, $i = 1,\ldots, N$ is given by 

\begin{equation}
\begin{split}
L^N \phi(\underline{x}) = &\binom{N}{2} \left(\sum_{i,j = 1}^{2N} x_i x_j \phi(\underline{x} + \frac{1}{N}e_i - \frac{1}{N}e_j) -\phi(\underline{x})\right)\\
&+ \frac{\alpha}{N} \cdot \binom{N}{2}  \sum_{i,j = 1}^N \Big(x_i x_{j+N}  \phi(\underline{x}+1/N e_i-1/N e_{j+N}) \\
&{}\hspace{3.5cm}+ x_i x_j \phi(\underline{x}+1/N e_i-1/N e_{j})- \phi(\underline{x}) \Big) \\ 
&+  N \vartheta_1 \sum_{i = 1}^N x_i \left(  \phi(\underline{x}-1/N e_i +1/N e_{i+N})  - \phi(\underline{x}) \right)\\
&{} \hspace{1.5cm} + N \vartheta_0 \sum_{i = 1}^N x_{i+N}\left( \phi(\underline{x}+1/N e_i -1/N e_{i+N}) - \phi(\underline{x})  \right). 
\end{split}
\end{equation}
\end{lemma}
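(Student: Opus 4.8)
The plan is to read the generator off directly from the list of transitions given immediately above the statement. Since $\underline{X}^N$ is a pure-jump Markov process on the finite lattice $S_N:=\Simpl^N\cap(\tfrac1N\mathbb Z)^{2N-1}$ (with $x_{2N}=1-\sum_{i<2N}x_i$), with bounded, state-dependent jump rates, its generator acts on \emph{every} function $\phi$ on the state space by
\begin{equation}
L^N\phi(\underline{x})=\sum_{\text{admissible jumps }\Delta}q_{\Delta}(\underline{x})\,\bigl(\phi(\underline{x}+\tfrac1N\Delta)-\phi(\underline{x})\bigr),
\end{equation}
so there is no domain issue and the proof is an exercise in enumerating jumps and collecting terms. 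The only points needing care are: translating each ``descendant $j$ is replaced by an offspring of $i$'' event into the correct pair of coordinates — recalling that the offspring carries its parent's type, so the coordinate that increases is the one indexed by the \emph{source} individual's family-and-type; and checking that the four resampling transitions recombine into a single double sum.

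For the \textbf{resampling} part, the four transitions listed occur, for $i,j\in\{1,\dots,N\}$, at rate $\binom{N}{2}$ times $x_ix_{j+N}$, $x_ix_j$, $x_{i+N}x_j$, $x_{i+N}x_{j+N}$, with increments $\tfrac1N(e_i-e_{j+N})$, $\tfrac1N(e_i-e_j)$, $\tfrac1N(e_{i+N}-e_j)$, $\tfrac1N(e_{i+N}-e_{j+N})$ respectively, using $Y_t(i)=X^N_t(i)$ and $Z_t(i)=X^N_t(i+N)$. Writing $a\in\{i,i+N\}$, $b\in\{j,j+N\}$, the four index ranges are $\{1,\dots,N\}^2$, $\{1,\dots,N\}\times\{N+1,\dots,2N\}$, $\{N+1,\dots,2N\}\times\{1,\dots,N\}$ and $\{N+1,\dots,2N\}^2$; these are pairwise disjoint and their union is all of $\{1,\dots,2N\}^2$, each pair $(a,b)$ occurring once with weight $x_ax_b$. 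Hence the resampling contribution equals $\binom{N}{2}\sum_{a,b=1}^{2N}x_ax_b\bigl(\phi(\underline{x}+\tfrac1N e_a-\tfrac1N e_b)-\phi(\underline{x})\bigr)$, and since $\sum_{a,b=1}^{2N}x_ax_b=\bigl(\sum_{a}x_a\bigr)^2=1$ the subtracted terms collect to the single $-\binom{N}{2}\phi(\underline{x})$ of the first line (the diagonal $a=b$ terms are null jumps, harmlessly present on both sides).

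For the \textbf{selection} part the same bookkeeping applied to the two transitions listed (rate $\tfrac{\alpha}{N}\binom{N}{2}$ times $x_ix_{j+N}$ and $x_ix_j$, increments $\tfrac1N(e_i-e_{j+N})$ and $\tfrac1N(e_i-e_j)$, $i,j\in\{1,\dots,N\}$) yields exactly the second and third lines; here $\sum_{i,j}(x_ix_{j+N}+x_ix_j)=\sum_{i=1}^N x_i$ may be $<1$, the missing mass being the selection events whose source is unfit and hence void, which is why the $-\phi(\underline{x})$ is written carrying the combined weight. Finally, for each $i$ the two \textbf{mutation} transitions have increments $\tfrac1N(e_i-e_{i+N})$ at rate $N\vartheta_0x_{i+N}$ and $\tfrac1N(e_{i+N}-e_i)$ at rate $N\vartheta_1x_i$, giving the last two lines. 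Summing the three contributions gives the asserted formula. No genuine obstacle arises; the step most likely to be gotten wrong is the reindexing of the resampling transitions, so one should verify explicitly that the map $(i,j,\text{case})\mapsto(a,b)$ is a bijection onto $\{1,\dots,2N\}^2$.
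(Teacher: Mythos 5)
Your proof is correct and is exactly the argument the paper intends: the lemma is stated immediately after the list of transitions and the paper supplies no further proof, treating the generator as a direct read-off of those jump rates, which is precisely your enumeration (including the bijection of the four resampling cases onto $\{1,\dots,2N\}^2$). Your remark that the selection compensator must carry the total weight $\sum_{i,j}(x_ix_{j+N}+x_ix_j)=\sum_{i=1}^{N}x_i$ rather than a bare $-\phi(\underline{x})$ per pair (so that $L^N$ annihilates constants) is the correct reading of the displayed formula and is the only point where more than bookkeeping is required.
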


As before, we now want to define a suitable measure-valued process:

\begin{lemma}\label{lem.MMMS.generator}
Recall that $(\kappa_i)_{i \in \mathbb N}$ are i.i.d. uniformly $[0,1]$. Let 
\begin{equation}
\mathcal E:[0,1]^{2N} \mapsto \mathcal M_1([0,2]), \quad x \mapsto 
\sum_{i = 1}^N x(i)\delta_{\kappa_i} + \sum_{i = 1}^N x(i+N)\delta_{\kappa_i+1}
\end{equation}
and define $\mathcal X^{N,T}_t := \mathcal E(\underline{X}^N_t)$. Then its generator $\Omega^N$   
(recall \eqref{eq.generator.neutral}) is given by 
\begin{equation}
(L^N(\Phi\circ\mathcal E))(x) = (\Omega^N\Phi)(\mathcal E(x))
\end{equation}
and satisfies

\begin{equation}
\Omega^N \Phi(\mu) =  \Omega^N_{\text{res}} \Phi(\mu) +  \Omega^N_{\text{sel}} \Phi(\mu) +\Omega^N_{\text{mut}} \Phi(\mu)
\end{equation}
with 
\begin{equation}
\Omega^N_{\text{res}} \Phi(\mu) := \binom{N}{2} \int_{[0,2]} \int_{[0,2]} \left( \Phi(\mu^{u,v}) - \Phi(\mu)\right) \mu(du) \mu(dv), 
\end{equation}
\begin{equation}
\begin{split}
\Omega^N_{\text{sel}} \Phi(\mu) := \frac{\alpha}{N} \cdot \binom{N}{2} \Big( \int_{[0,2]} \int_{[0,2]} \left( \Phi(\mu^{u,v}) \chi(u) - \Phi(\mu) \right)\mu(du)\mu(dv),
\end{split}
\end{equation}
where $\chi(u) = 1(u \le 1)$ and 
\begin{equation}
\begin{split}
\Omega^N_{\text{mut}} \Phi(\mu) :=   N \Big(\vartheta_1\int_{[0,1]} &\left(\Phi(\Theta_u(\mu)) - \Phi(\mu)\right) \mu(du) \\
&{}\hspace{1cm}+  \vartheta_0\int_{(1,2]} \left(\Phi(\Theta_u'(\mu) - \Phi(\mu)\right)\mu(du)\Big),
\end{split}
\end{equation}
where 
\begin{equation}
\begin{split}
\mu^{u,v} &= \mu + \frac{1}{N} \delta_{u}- \frac{1}{N} \delta_{v}, \\
\Theta_u(\mu) &= \mu + \frac{1}{N} \delta_{u+1} - \frac{1}{N} \delta_{u},\\
\Theta_u'(\mu) &= \mu + \frac{1}{N} \delta_{u-1} - \frac{1}{N} \delta_{u}.
\end{split}
\end{equation}
\end{lemma}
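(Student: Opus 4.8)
The content of the lemma is really the explicit formula for $\Omega^N$; once that is in place, the identity $(L^N(\Phi\circ\mathcal E))(x)=(\Omega^N\Phi)(\mathcal E(x))$ makes the right-hand side manifestly a function of $\mathcal E(x)$ alone, and since $\mathcal X^{N,T}_t=\mathcal E(\underline{X}^N_t)$ is the image of the Markov jump process $\underline{X}^N$ under $\mathcal E$, Dynkin's formula for $L^N$ then shows that for every $\Phi\in\Pi$ the process $\Phi(\mathcal X^{N,T}_t)-\int_0^t(\Omega^N\Phi)(\mathcal X^{N,T}_s)\,ds$ is a martingale, i.e.\ $\Omega^N$ is the generator on $\Pi$ (which by the remark following \eqref{eq.polyn} is dense in $C(\mathcal M_1([0,2]))$). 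So the plan is simply to verify the explicit formula, and by linearity of $L^N$ it suffices to match the resampling, selection and mutation parts of $L^N$ against $\Omega^N_{\text{res}}$, $\Omega^N_{\text{sel}}$ and $\Omega^N_{\text{mut}}$ separately, for $\Phi\in\Pi$.

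First I would fix a dictionary between the $2N$ coordinates and the atoms of $\mathcal E(\underline x)$: set $p_i:=\kappa_i$ for $1\le i\le N$ and $p_i:=\kappa_{i-N}+1$ for $N<i\le 2N$, so that $\mu:=\mathcal E(\underline x)=\sum_{i=1}^{2N}x(i)\,\delta_{p_i}$, the atoms in $[0,1]$ being the $\kappa_i$ and those in $(1,2]$ the $\kappa_i+1$; since almost surely the $\kappa_i$ are distinct, $\mu(\{p_i\})=x(i)$. Three elementary identities then drive the computation: (i) $\mathcal E(\underline x+\tfrac1N e_i-\tfrac1N e_j)=\mu+\tfrac1N\delta_{p_i}-\tfrac1N\delta_{p_j}=\mu^{p_i,p_j}$ for all $i,j$; (ii) $\mathcal E(\underline x-\tfrac1N e_i+\tfrac1N e_{i+N})=\Theta_{\kappa_i}(\mu)$ and $\mathcal E(\underline x+\tfrac1N e_i-\tfrac1N e_{i+N})=\Theta_{\kappa_i+1}'(\mu)$ for $i\le N$; and (iii) since $\mu=\sum_i x(i)\delta_{p_i}$, any sum $\sum_{i,j=1}^{2N}x(i)x(j)F(p_i,p_j)$ equals $\int_{[0,2]}\int_{[0,2]}F(u,v)\,\mu(du)\mu(dv)$, while $\sum_{i=1}^{N}x(i)G(\kappa_i)=\int_{[0,1]}G\,d\mu$ and $\sum_{i=1}^{N}x(i+N)G(\kappa_i+1)=\int_{(1,2]}G\,d\mu$.

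Substituting $\phi=\Phi\circ\mathcal E$ into the expression for $L^N$ in Lemma \ref{lem.WFMS.finite} and applying (i)--(iii) then yields the result almost mechanically: the $\binom{N}{2}$-term becomes $\Omega^N_{\text{res}}\Phi(\mu)$ by (i) and the double-integral identity; the $N\vartheta_1$- and $N\vartheta_0$-terms become the two pieces of $\Omega^N_{\text{mut}}\Phi(\mu)$, since there the jump sits on a fit coordinate $i\le N$ (weight $x(i)=\mu(\{\kappa_i\})$, jump $\Theta_{\kappa_i}$) respectively on the unfit coordinate $i+N$ (weight $x(i+N)=\mu(\{\kappa_i+1\})$, atom $\kappa_i+1\in(1,2]$, jump $\Theta_{\kappa_i+1}'$), so the two single-integral identities apply; and the $\tfrac{\alpha}{N}\binom{N}{2}$-term becomes $\Omega^N_{\text{sel}}\Phi(\mu)$ once one recognises that restricting the parent index to $\{1,\dots,N\}$ is exactly the effect of the factor $\chi(u)=1(u\le 1)$ in the integrand, and that the two inner summands $x(i)x(j)\,\phi(\underline x+\tfrac1N e_i-\tfrac1N e_j)$ and $x(i)x(j+N)\,\phi(\underline x+\tfrac1N e_i-\tfrac1N e_{j+N})$ are the contributions of a fit respectively an unfit child, which together rebuild the inner integral $\int_{[0,2]}(\,\cdots)\,\mu(dv)$.

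The only point requiring genuine care is the selection term: one must keep the family label $\kappa_i$ (carried along unchanged by resampling and by a selection arrow) separate from the type of the moved individual (membership in $[0,1]$ versus $(1,2]$, which a selection arrow turns fit), and one must put the indicator $\chi$ on the parent atom, not the child atom; the $\vartheta_0$-mutation also needs the slightly less obvious identification $\mathcal E(\underline x+\tfrac1N e_i-\tfrac1N e_{i+N})=\Theta_{\kappa_i+1}'(\mu)$, because the atom of $\mu$ that is removed sits at $\kappa_i+1$, not at $\kappa_i$. Everything else is routine bookkeeping, with the a.s.\ distinctness of the $\kappa_i$ ensuring that the passage between coordinate sums and $\mu$-integrals is exact.
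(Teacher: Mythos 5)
Your argument is correct and is exactly the ``straight forward calculation'' that the paper's one-line proof leaves to the reader: you substitute $\phi=\Phi\circ\mathcal E$ into $L^N$ from Lemma \ref{lem.WFMS.finite}, use linearity of $\mathcal E$ to identify the jumped configurations with $\mu^{u,v}$, $\Theta_u(\mu)$, $\Theta_u'(\mu)$, and convert coordinate sums into $\mu$-integrals via the a.s.\ distinctness of the $\kappa_i$. The points you flag as needing care (the indicator $\chi$ sitting on the parent atom, and the identification $\mathcal E(\underline x+\tfrac1N e_i-\tfrac1N e_{i+N})=\Theta_{\kappa_i+1}'(\mu)$) are indeed the only non-mechanical steps.
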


\begin{proof}
This is a straight forward calculation.
\end{proof}

\begin{remark}\label{rem.con.MM.PW.2}
Note that 
\begin{equation}
\mathcal X^{N,T}_0  = \frac{1}{N}\sum_{i = 1}^N  u_i(T) \delta_{\kappa_i} +\frac{1}{N} \sum_{i = 1}^N (1-u_i(T))\delta_{\kappa_i+1}. 
\end{equation} 
\end{remark}

Finally observe the following two facts: 

\begin{lemma}\label{lem.dist.finite}
One has 
\begin{equation}
\begin{split}
 P(R_{T}^N \le h) &= E\left[\int_{[0,2]} \mathcal X^{N,T-h}_h(\{x\}) \mathcal X^{N,T-h}_h(dx)+\right. \\
&{}\hspace{2cm}\left. + 2 \int_{[0,1]}\mathcal X^{N,T-h}_h(\{x+1\})\mathcal X^{N,T-h}_h(dx) \right],
\end{split}
\end{equation}
for all $0 \le h \le T$.
\end{lemma}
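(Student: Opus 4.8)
The proof separates into two independent steps: first express $P(R_T^N\le h)$ through the squared family sizes of the descendant decomposition, and then match this with the two integrals against $\mathcal X_h^{N,T-h}$. I give the argument for $0\le h<T$ (the only range used in the sequel). Recall from the discussion of descendants that, for such $h$, $D_{T-h,T}(k)$ is the closed $r_T$-ball of radius $h$ about each of its points, that $I_N=\biguplus_{k\in I_N}D_{T-h,T}(k)$, and that $X_h^{N,T-h}(k)=Y_h(k)+Z_h(k)=|D_{T-h,T}(k)|/N$ is the size of the $k$-th family. Taking $k=A_{T-h}(i,T)$, so that $i\in D_{T-h,T}(k)$, the ball description gives $\{j:r_T(i,j)\le h\}=D_{T-h,T}(A_{T-h}(i,T))$; hence $r_T(i,j)\le h$ holds if and only if $A_{T-h}(i,T)=A_{T-h}(j,T)$, i.e. if and only if $i$ and $j$ lie in a common set $D_{T-h,T}(k)$. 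Summing $1(r_T(i,j)\le h)$ over all ordered pairs and grouping by the common ancestor $k$ yields
\[
\frac1{N^2}\sum_{i,j=1}^N 1\big(r_T(i,j)\le h\big)=\sum_{k\in I_N}\Big(\frac{|D_{T-h,T}(k)|}{N}\Big)^{2}=\sum_{k\in I_N}X_h^{N,T-h}(k)^2 ,
\]
so that $P(R_T^N\le h)=E\big[\sum_{k\in I_N}X_h^{N,T-h}(k)^2\big]$ after taking expectations.

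It remains to identify the right-hand side of the lemma with this last expectation. By Lemma~\ref{lem.MMMS.generator}, $\mu:=\mathcal X_h^{N,T-h}=\mathcal E(\underline X^N_h)=\sum_{k=1}^N Y_h(k)\,\delta_{\kappa_k}+\sum_{k=1}^N Z_h(k)\,\delta_{\kappa_k+1}$. Since the $(\kappa_i)_{i\in\mathbb N}$ are i.i.d.\ uniform on $[0,1]$ and independent of $\underline X^N$, with probability one the $2N$ points $\kappa_1,\dots,\kappa_N,\kappa_1+1,\dots,\kappa_N+1$ are pairwise distinct and satisfy $\kappa_k\in(0,1)$, $\kappa_k+1\in(1,2)$ for every $k$. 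On this event $\mu(\{\kappa_k\})=Y_h(k)$ and $\mu(\{\kappa_k+1\})=Z_h(k)$, the atoms of $\mu$ in $[0,1]$ are exactly the $\kappa_k$, and a direct computation gives
\[
\int_{[0,2]}\mu(\{x\})\,\mu(dx)=\sum_{k=1}^N Y_h(k)^2+\sum_{k=1}^N Z_h(k)^2,\qquad \int_{[0,1]}\mu(\{x+1\})\,\mu(dx)=\sum_{k=1}^N Y_h(k)\,Z_h(k).
\]
Consequently the bracketed right-hand side of the lemma equals $\sum_{k=1}^N\big(Y_h(k)+Z_h(k)\big)^2=\sum_{k=1}^N X_h^{N,T-h}(k)^2$ almost surely; taking expectations and combining with the first step finishes the proof.

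The step that carries the real content is the first one, specifically the equivalence $r_T(i,j)\le h\iff A_{T-h}(i,T)=A_{T-h}(j,T)$: this is where the ultrametric structure of $r_T$ and the ball description of the descendant sets (recorded just before the lemma) do the work, converting the backward-in-time coalescence event into a statement about the forward family-size vector at the single time $T-h$. Apart from that, the only care needed is in the second step, namely to work on the full-probability event where the labels $\kappa_i$ are distinct (and stay away from the endpoints $\{0,1\}$), since the squared-mass identity for $\mu$ fails off this event; both ingredients are elementary given what has already been established, so no genuine obstacle remains.
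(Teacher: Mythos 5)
Your proof is correct and follows essentially the same route as the paper: decompose the event $\{r_T(i,j)\le h\}$ by the common ancestor at time $T-h$, sum over ordered pairs to obtain $E\bigl[\sum_k (|D_{T-h,T}(k)|/N)^2\bigr]$, and then identify this with the atomic integrals against $\mathcal X^{N,T-h}_h$ using the distributional identity between family sizes and descendant counts. The only difference is cosmetic: you write out the final algebraic identification of the two integrals with $\sum_k(Y_h(k)+Z_h(k))^2$ on the almost-sure event that the labels $\kappa_i$ are distinct, a step the paper leaves implicit.
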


\begin{proof}
One can follow the approach, presented in the previous section, in order to prove that 
\begin{equation}\label{eq.connection}
\left(\mathcal X^{N,T}_t(\{\kappa_i\}) + \mathcal X^{N,T}_t(\{\kappa_i+1\})\right)_{t \ge 0} \stackrel{d}{=} \left(\frac{|D_{T,T+t}(i)|}{N}\right)_{t \ge 0}. 
\end{equation}
By the definition of $D_{t,T}$ and $R_T^N$ we get that 
\begin{equation}
\begin{split}
P(R_T^N \le h) &= \frac{1}{N^2}\sum_{i,j \in I_N} P(r_T(i,j) \le h) \\
&= \frac{1}{N^2}\sum_{i,j \in I_N} \sum_{k \in I_N} E\left[1(i \in B_{T-h}^{r_T}(k)) 1(j \in B_{T-h}^{r_T}(k))\right] \\
&= \frac{1}{N^2}\sum_{i,j \in I_N} E\left[\sum_{k \in \mathcal A_{T-h,T}} 1(i \in D_{T-h,T}(k)) 1(j \in D_{T-h,T}(k))\right] \\
&= \frac{1}{N^2} E\left[\sum_{k \in \mathcal A_{T-h,T}} |D_{T-h,T}(k)|^2\right] \\
&= \frac{1}{N^2} E\left[\sum_{i \in I_N} |D_{T-h,T}(i)|^2\right] \\
&=  \sum_{i \in I_N} E\left[(\mathcal X^{N,T-h}_h(\{\kappa_i\}) + \mathcal X^{N,T-h}_h(\{\kappa_i+1\}))^2\right].  
\end{split}
\end{equation}
\end{proof}

\begin{remark}\label{rem.connection.distance.matrix}
In terms of the tree-valued model described in Section \ref{sec.tree-valued} it is also true that (compare the proof of Proposition \ref{prop.gen.distance} for the notation)
\begin{equation}
\left(\mathcal X^{N,T}_t(\{\kappa_i\}) + \mathcal X^{N,T}_t(\{\kappa_i+1\})\right)_{t \ge 0} \stackrel{d}{=} \left(\nu^{2,\mathcal U^N_{T+t}}[0,t] \right)_{t \ge 0}.
\end{equation}
\end{remark}

\begin{lemma}\label{l.stopping1}
Let 
\begin{equation}
 S^N:= \inf\left\{t \ge 0: \sum_{i = 1}^N \left(\mathcal X^{N,T}_t(\{\kappa_i\}) + \mathcal X^{N,T}_t(\{\kappa_i+1\})\right)^2 = 1\right\},
\end{equation}
then $\sup_{N \in \mathbb N} E[S^N] < \infty$. 
\end{lemma}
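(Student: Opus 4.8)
The plan is to reduce the statement to the classical fact that the most‑recent‑common‑ancestor time of a finite Moran population has expectation bounded uniformly in the population size.

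\emph{Reformulation.} First I would rewrite $S^N$ in genealogical terms. By the construction underlying \eqref{eq.connection} (which, since the $\kappa_i$ are almost surely distinct and lie in $[0,1]$, holds as an almost sure identity, jointly in $i$),
\[
 \mathcal X^{N,T}_t(\{\kappa_i\}) + \mathcal X^{N,T}_t(\{\kappa_i+1\}) = \frac{n_i(t)}{N}, \qquad n_i(t) := |D_{T,T+t}(i)|,
\]
and $\sum_{i\in I_N} n_i(t) = N$. Therefore $\sum_{i}\big(\mathcal X^{N,T}_t(\{\kappa_i\}) + \mathcal X^{N,T}_t(\{\kappa_i+1\})\big)^2 = N^{-2}\sum_i n_i(t)^2$, which is $\le N^{-1}\max_i n_i(t) \le 1$, with equality to $1$ exactly when some family has full size, i.e. when the number of distinct ancestors $K_t := |\mathcal A_{T,T+t}|$ equals $1$. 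Since a time‑$T$ individual having no descendant at time $T+t$ has none afterwards, $t\mapsto K_t$ is non‑increasing with $K_0 = N$; hence $S^N = \inf\{t\ge 0 : K_t = 1\}$ and $\{S^N > M\} = \{K_M \ge 2\}$ for every $M\ge 0$.

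\emph{Domination by Kingman's coalescent.} Next, for fixed $M\ge 0$ I would read $K_M = |\mathcal A_{T,T+M}|$ backwards from time $T+M$: put $\widehat K_u := |\{A_{(T+M)-u}(i,T+M) : i\in I_N\}|$ for $u\in[0,M]$, so that $\widehat K_0 = N$, $\widehat K_M = K_M$, and $u\mapsto\widehat K_u$ is non‑increasing (ancestral lineages only merge as one looks further back). The key observation is that the resampling arrows alone force $\widehat K$ to jump down, by one, from state $k$ at rate at least $\binom k2$: the $k$ current ancestral lineages occupy $k$ distinct individuals, and an arrow $\eta^{a,b}_{\text{res}}$ merges two of them precisely when both endpoints $a,b$ carry a lineage, which occurs at total rate at least $\binom k2$, independently of the types and of $\alpha$; selection arrows can only create additional down‑jumps. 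Thus $\widehat K$ is a non‑increasing, unit‑jump process whose next‑jump rate from $k$ is at least that of the block‑counting process $B$ of Kingman's $N$‑coalescent, and a standard comparison of pure‑death processes gives that $\inf\{u:\widehat K_u=1\}$ is stochastically dominated by $\inf\{u:B_u=1\}$; in particular $P(K_M \ge 2) = P(\widehat K_M\ge 2) \le P(B_M\ge 2)$ for every $M$.

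\emph{Conclusion.} Integrating over $M$ then finishes it:
\[
 E[S^N] = \int_0^\infty P(S^N > M)\,dM \le \int_0^\infty P(B_M\ge 2)\,dM = E\big[\inf\{u:B_u=1\}\big] = \sum_{k=2}^{N}\frac{1}{\binom k2} = 2\Big(1-\tfrac1N\Big) < 2,
\]
which is uniform in $N$ (and in $T$). The only step that needs genuine care — the hard part, such as it is — is the rate comparison above: one must check, uniformly over the finitely many configurations of ancestral positions and over the type configuration, that the resampling Poisson processes restricted to lineage‑carrying sites realize the Kingman pairwise mergers at the claimed rate, and that every selection arrow either merges two ancestral lineages or merely relocates one, never in a way that could slow the coalescence down. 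Granted that bookkeeping, the domination of pure‑death chains and the evaluation of the Kingman MRCA time are routine.
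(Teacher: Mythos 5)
Your reformulation of $S^N$ as the time to the most recent common ancestor of the time-$(T+M)$ population is exactly the paper's first step, and your final integration would be fine if the domination held. The problem is the middle step, which you yourself flag as ``the hard part'': the claim that the backward-in-time count $\widehat K$ of \emph{actual} ancestral lineages jumps down from $k$ at rate at least $\binom{k}{2}$ is not established, and the natural ways of making it precise fail. The set of occupied sites at backward time $u$ is \emph{not} a function of the Poisson arrows in the forward window $[(T+M)-u,\,T+M]$ alone: whether a selection arrow at forward time $t$ is used depends on the type $u_{j}(t-)$ of its source, and the type configuration at time $(T+M)-u$ is itself built from the resampling and mutation events at \emph{earlier} forward times --- i.e.\ from precisely the arrows you want to treat as ``fresh'' when you compute the next backward coalescence rate. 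Consequently $\widehat K$ is not adapted to any filtration with respect to which the remaining resampling arrows are independent, and the conditional-rate comparison with a pure-death chain does not go through as stated. A pathwise coupling does not rescue it either: selection arrows relocate lineages, so the actual lineages and the lineages of a ``resampling-only'' genealogy occupy different sites, and a resampling arrow that merges two of the latter need not merge two of the former. So the stochastic domination of $S^N$ by the Kingman MRCA time (with the $\alpha$-uniform bound $E[S^N]\le 2$) is a plausible but unproven claim --- indeed it is essentially a quantitative form of the paper's main theorem, not an input to it.

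The paper circumvents exactly this entanglement by passing to the ancestral selection graph: going backward, every selection arrow hitting an occupied site causes a \emph{branching} into potential ancestors, so the resulting line-counting process is an autonomous function of the arrows (no types needed), dominates the true ancestor count, and is a birth-death chain with birth rate linear in $k$ (at most $\alpha k$) and death rate quadratic in $k$ (this is Proposition 6.9 of \cite{DGP12}). The finiteness of the expected absorption time at $1$ of that chain, uniformly in $N$ (though not in $\alpha$), is then quoted from Theorem 3.2 and Corollary 3.4 of \cite{Krone}. If you want to keep your Kingman-style argument, you would have to either justify the conditional independence you are implicitly using, or replace it with the ASG comparison --- in which case the clean bound $E[S^N]<2$ must be given up in favour of an $\alpha$-dependent constant, which is all the lemma requires.
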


\begin{proof}
Observe that, by Remark \ref{rem.connection.distance.matrix} and the definition of $r$, this   is exactly the time to the most recent common ancestor, i.e. the first time where there is only one single ancestor left that gave birth to all individuals. By Proposition 6.9 in \cite{DGP12} one can bound the number of ancestors by a birth-death process with quadratic death rate.  We can now apply Theorem 3.2 and Corollary 3.4 
in \cite{Krone} to get the result. 
\end{proof}

\subsubsection{The large population limit}\label{sec.limit}

We start with the proof of Proposition \ref{lem.WFMMS}.

\begin{proof}(Proposition \ref{lem.WFMMS})
Define for $i \le  n$
\begin{equation}
b_i(\underline{x}) :=  \alpha \left(\sum_{j = 1}^{n} x_{j+n}\right)   x_i +  (\vartheta_0 x_{i+n}-\vartheta_1 x_i)
\end{equation}
and for $i \in \{n+1,\ldots,2n-1\}$
\begin{equation}
b_i(\underline{x}) :=  -\alpha \left(\sum_{j = 1}^{n} x_{j}\right) x_{i}  +  (\vartheta_1 x_{i-n}-\vartheta_0 x_{i}).
\end{equation}
then $b: \Simpl^n \to \mathbb R^{2n-1}$ is Lipschitz and satisfies 
\begin{equation}
\begin{split}
b_i(x) &\ge 0 ,\qquad \text{if} \quad x_i = 0,\\
\sum_{i = 1}^{2n-1} b_i(x) &= 0,\qquad \text{if} \quad \sum_{i = 1}^{2n-1} x_i = 1.  
\end{split}
\end{equation}
Now the result follows by Theorem 8.2.8 in \cite{EK86}. 
\end{proof}

\begin{remark}\label{r.conv.WF}
Recall the definition of  $L^N$  given in Lemma \ref{lem.WFMS.finite}. If we restrict its definition to functions $\phi \in C^2(\Simpl^n)$, where we set $x_{2n} = 1 - \sum_{i = 1}^{2n-1}x_i$ for $x \in \Simpl^n$, then it is a straight forward calculation (using Taylor expansion) to show that 
\begin{equation}
|| L^N\phi - L\phi||_{\infty} \rightarrow 0. 
\end{equation}
\end{remark}

Now, we need to define a suitable limit object for $\mathcal X^{N,T}$ given in Lemma \ref{lem.MMMS.generator}. In order to do this 
recall that we can define partial derivation  for polynomials (see \eqref{eq.polyn}) by 
\begin{equation}
\frac{\partial \Phi}{\partial \mu} (u) := \lim_{\eps\downarrow 0} \left(\Phi(\mu+\eps \delta_u) -\Phi(\mu)\right).
\end{equation}
We consider 

\begin{equation}
\Omega \Phi(\mu) := \Omega_{\text{res}} \Phi(\mu) +  \Omega_{\text{sel}} \Phi(\mu) +\Omega_{\text{mut}} \Phi(\mu),
\end{equation}
with 

\begin{equation}
\Omega_{\text{res}}\Phi(\mu) := \frac{1}{2} \int_{[0,2]}\int_{[0,2]} \frac{\partial^2 \Phi(\mu)}{\partial \mu \partial \mu} (u,v)  Q_{\mu}(du,dv),
\end{equation}

\begin{equation}
\Omega_{\text{sel}} \Phi(\mu) := \alpha \int_{[0,2]} \int_{[0,2]} \frac{\partial \Phi(\mu)}{\partial \mu} (u) \chi(v)  Q_{\mu}(du,dv),
\end{equation}
and
\begin{equation}
\Omega_{\text{mut}} \Phi(\mu) :=  \int_{[0,2]} \left( \int_{[0,2]} \frac{\partial \Phi(\mu)}{\partial \mu} (v) M(u,dv) - \frac{\partial \Phi(\mu)}{\partial \mu} (u) \right) \mu(du),
\end{equation}
where

\begin{equation}
Q_{\mu} (du,dv) = \mu(du) \delta_u (dv) - \mu(du) \mu(dv)
\end{equation} 
and 

\begin{equation}
M(u,dv) = \vartheta_1  \delta_{u+1}(dv) 1(u \le 1) +  \vartheta_0 \delta_{u-1}(dv) 1(u > 1).
\end{equation}

\begin{proposition}\label{p.martingale.FV} (Characterization and properties of the limit process)
The following holds: 
\begin{itemize}
\item[(i)] The $(\Omega,\Pi,\nu)$-martingale problem is well-posed for all initial values $\nu \in \mathcal M_1([0,2])$.
\item[(ii)] The solution has a modification with continuous paths and we denote this modification by  $\mathcal X^T$.
\item[(iii)] If $\mathcal X^{N,T}_0 \Rightarrow \mu^T$ for some random measure $\mu^T$, then 
\begin{equation}
\mathcal X^{N,T} \Rightarrow \mathcal X^T, 
\end{equation}
as processes and $\mathcal X^T_0 = \mu^T$. 
\end{itemize}
\end{proposition}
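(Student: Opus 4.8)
The plan is to recognise the limit $\mathcal X^T$ as a Fleming--Viot process on the compact type space $[0,2]$ with a bounded mutation operator (the one encoded by the kernel $M$) and selection with the bounded fitness function $\chi$, and to run the classical program for such processes. \textbf{For well-posedness (part (i))} I would first dispose of the neutral part $\Omega_{\text{res}} + \Omega_{\text{mut}}$: since $M$ is a bounded kernel on the compact space $[0,2]$, this operator generates a neutral Fleming--Viot process, and well-posedness of the associated $(\,\cdot\,,\Pi,\nu)$-martingale problem is classical --- existence from the tight Moran approximations of Lemma~\ref{lem.MMMS.generator}, uniqueness from the function-valued (moment) dual, namely the Markov chain on $\bigcup_{m\ge1} C([0,2]^m)$ that coalesces each pair of coordinates at rate one and evolves each coordinate under the mutation operator; being in duality with the moment polynomials $\Phi = \langle \phi,\mu^{\otimes m}\rangle \in \Pi$, it determines all mixed moments of $\mathcal X^T_t$ and hence its law. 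To include selection I would add $\Omega_{\text{sel}}$ as an absolutely continuous perturbation: since $\chi$ is bounded, the Dawson--Girsanov change of measure (Theorem~2 in \cite{DGP12}) maps the neutral Fleming--Viot martingale problem onto the $(\Omega,\Pi,\nu)$-martingale problem while preserving well-posedness; alternatively one enlarges the dual to a branching--coalescing particle system, the branching events coming from $\Omega_{\text{sel}}$.

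\textbf{For the continuous modification (part (ii))} I would use part (i) together with the fact that $\mathcal X^T$ is a limit of Moran models with jumps of size $O(1/N)$: for each $\Phi \in \Pi$ the real-valued process $\langle \Phi, \mathcal X^{N,T}_\cdot\rangle$ is a semimartingale whose predictable characteristics are bounded uniformly in $N$ and whose jumps are $O(1/N)$, so an Aldous--Rebolledo estimate gives tightness in $C([0,\infty),\mathcal M_1([0,2]))$; every limit point solves the $(\Omega,\Pi)$-martingale problem and, by the uniqueness just established, is a version of $\mathcal X^T$, which is therefore path-continuous. (Equivalently one quotes the path regularity of Fleming--Viot processes with bounded mutation and selection, or notes that the predictable quadratic variation of each $\langle\phi,\mathcal X^T_\cdot\rangle$ is absolutely continuous.)

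\textbf{For the convergence (part (iii))} this is then the standard martingale-problem convergence argument. Compact containment is automatic since $\mathcal M_1([0,2])$ is weakly compact. For $\Phi \in \Pi$ a Taylor expansion in $1/N$, exactly as in Remark~\ref{r.conv.WF}, gives $\|\Omega^N\Phi - \Omega\Phi\|_\infty \to 0$ while $\Omega^N\Phi$ stays bounded uniformly in $N$; combined with the $O(1/N)$ jumps this yields tightness of $\{\mathcal X^{N,T}\}$ and shows that every weak limit point solves the $(\Omega,\Pi,\nu)$-martingale problem started from $\mu^T$. Well-posedness then forces $\mathcal X^{N,T} \Rightarrow \mathcal X^T$ with $\mathcal X^T_0 = \mu^T$, e.g. by Theorem~4.8.10 and Corollary~4.8.16 in \cite{EK86}.

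\textbf{The main obstacle} is the uniqueness half of part (i): building the function-valued dual --- a coalescent with mutation, turned branching--coalescing in the selective case --- for the specific kernel $M$, which is the deterministic map interchanging the fit region $[0,1]$ and the unfit region $(1,2]$ at rates $\vartheta_1$ and $\vartheta_0$, and verifying the duality identity. Everything else reduces to routine Taylor expansions and Aldous--Rebolledo bookkeeping.
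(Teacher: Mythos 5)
Your proposal is correct and follows essentially the same route as the paper, which for this proposition simply observes that existence follows from the generator convergence in (iii) and that uniqueness follows by duality or the Girsanov transform, referring to \cite{dawson} and \cite{EK93} for the details of exactly the classical Fleming--Viot program (bounded mutation kernel, bounded fitness, moment duality, Aldous--Rebolledo tightness) that you spell out. No discrepancy to report.
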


\begin{proof}
The existence of a solution follows by (iii), which is straight forward by showing convergence of the corresponding generators. 
The uniqueness can be proven either by duality or using the Girsanov transform. We refer to  \cite{dawson} or \cite{EK93} for a rigorous 
proof of (i)-(iii).
\end{proof}

\begin{lemma}\label{l.init.conv}
In our situation we have 
\begin{equation}
\mathcal X^{N,T}_0\Rightarrow \bar Y_T \lambda\big|_{[0,1]} + (1-\bar Y_T) \lambda\big|_{[1,2]},
\end{equation}
where $\bar Y_T$ is the Fisher-Wright diffusion with mutation and selection given in the introduction (note that $\bar Y_0 = E[\bar u_1]$ by the strong law of large numbers - compare Section \ref{sec.construction}) and hence the assumptions of the Proposition are satisfied. 
\end{lemma}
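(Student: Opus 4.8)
The plan is to separate the two sources of randomness in
\begin{equation}
\mathcal X^{N,T}_0 = \frac{1}{N}\sum_{i=1}^N u_i(T)\,\delta_{\kappa_i} + \frac{1}{N}\sum_{i=1}^N (1-u_i(T))\,\delta_{\kappa_i+1}
\end{equation}
(Remark \ref{rem.con.MM.PW.2}): the labels $(\kappa_i)$, which are i.i.d.\ uniform on $[0,1]$ and independent of everything else, will be averaged out by a law of large numbers, whereas the type configuration $(u_i(T))_{i\in I_N}$ survives in the limit only through the type-$1$ frequency $\bar Y^N_T := \tfrac1N\sum_{i\le N} u_i(T)$. Since $[0,2]$ is compact, $\mathcal M_1([0,2])$ with the weak topology is a compact metric space, so tightness of $(\mathcal X^{N,T}_0)_N$ is automatic and it suffices to identify the limit; I will work with the metric $d(\mu,\nu):=\sum_{j\ge1}2^{-j}\bigl(|\langle f_j,\mu\rangle-\langle f_j,\nu\rangle|\wedge1\bigr)$ for a countable family $\{f_j\}\subset C([0,2])$ dense in the unit ball.

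First I would introduce the auxiliary random measure $\mathcal Y^N:=\bar Y^N_T\,\lambda|_{[0,1]}+(1-\bar Y^N_T)\,\lambda|_{[1,2]}$ and show that $d(\mathcal X^{N,T}_0,\mathcal Y^N)\to0$ in probability. Conditioning on $\mathcal G^N:=\sigma(u_i(T):i\in I_N)$ and using independence of $(\kappa_i)$ from $\mathcal G^N$, one gets for every $f\in C([0,2])$ that $E[\langle f,\mathcal X^{N,T}_0\rangle\mid\mathcal G^N]=\langle f,\mathcal Y^N\rangle$ and, since the contributions of the $N$ labels are independent, $\mathrm{Var}(\langle f,\mathcal X^{N,T}_0\rangle\mid\mathcal G^N)\le\|f\|_\infty^2/N$. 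Hence $E[(\langle f,\mathcal X^{N,T}_0\rangle-\langle f,\mathcal Y^N\rangle)^2]\le\|f\|_\infty^2/N$, and summing this estimate against $2^{-j}$ over $f=f_j$ yields $E[d(\mathcal X^{N,T}_0,\mathcal Y^N)]=O(N^{-1/2})$, from which the claimed convergence in probability follows by Markov's inequality.

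Next I would show $\mathcal Y^N\Rightarrow\bar Y_T\,\lambda|_{[0,1]}+(1-\bar Y_T)\,\lambda|_{[1,2]}$. The map $y\mapsto y\,\lambda|_{[0,1]}+(1-y)\,\lambda|_{[1,2]}$ from $[0,1]$ to $\mathcal M_1([0,2])$ is continuous, so by the continuous mapping theorem this reduces to $\bar Y^N_T\Rightarrow\bar Y_T$. That in turn is the classical diffusion approximation of the Moran model: the strong law of large numbers gives $\bar Y^N_0=\tfrac1N\sum_{i\le N}\bar u_i\to E[\bar u_1]$ almost surely, the type-$1$ frequency processes converge in the Skorokhod topology to the Wright-Fisher diffusion $\bar Y$ with generator $G$ started in $E[\bar u_1]$ (Section 10.2 in \cite{EK86}; recall the remark in Section \ref{sec.model} that this limit is insensitive to the precise Moran prescription), and continuity of the paths of $\bar Y$ upgrades this to convergence of the one-dimensional marginal at time $T$. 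Combining the two steps by Slutsky's theorem on the metric space $\mathcal M_1([0,2])$ gives $\mathcal X^{N,T}_0\Rightarrow\bar Y_T\,\lambda|_{[0,1]}+(1-\bar Y_T)\,\lambda|_{[1,2]}$, and since this limit is an element of $\mathcal M_1([0,2])$, the hypothesis of Proposition \ref{p.martingale.FV}(iii) is met.

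All steps are soft; the only point requiring care — and the one I would flag as the main obstacle — is the bookkeeping in the conditioning argument, namely verifying cleanly that the label randomness contributes only $O(1/N)$ to the variance of each pairing while the conditional mean is \emph{exactly} $\langle f,\mathcal Y^N\rangle$, so that the limit is genuinely driven by $\bar Y_T$ alone with no residual fluctuation. One also has to make sure the cited convergence $\bar Y^N_T\Rightarrow\bar Y_T$ is invoked for the ``technical'' model of Section \ref{sec.model} rather than the one in the introduction, which is precisely the content of the remark following the model definition.
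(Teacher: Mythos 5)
Your proof is correct, but it takes a genuinely different route from the paper's. The paper uses a moment method: it computes $E\bigl[\bigl(\int\phi\,d\mathcal X^{N,T}_0\bigr)^m\bigr]$ by expanding over $m$-tuples of indices, discarding the $O(N^{m-1})$ tuples with repetitions, factoring out the label randomness via $E[\phi(\kappa_1)]$, and showing these moments converge to $E\bigl[\bigl(\bar Y_T E[\phi(\kappa_1)]+(1-\bar Y_T)E[\phi(1+\kappa_1)]\bigr)^m\bigr]$; it then concludes by Stone--Weierstrass, since the linear span of $\mu\mapsto(\int f\,d\mu)^m$ is dense in $C(\mathcal M_1([0,2]))$. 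You instead introduce the intermediate measure $\mathcal Y^N=\bar Y^N_T\lambda|_{[0,1]}+(1-\bar Y^N_T)\lambda|_{[1,2]}$, show $d(\mathcal X^{N,T}_0,\mathcal Y^N)\to 0$ in probability via the conditional mean/variance computation (which is correct: given the types, the conditional mean is exactly $\langle f,\mathcal Y^N\rangle$ and the conditional variance is at most $\|f\|_\infty^2/N$ by independence of the labels), and finish with the continuous mapping theorem applied to $y\mapsto y\lambda|_{[0,1]}+(1-y)\lambda|_{[1,2]}$ together with the converging-together (Slutsky) lemma. Both arguments rest on the same external input, namely $\bar Y^N_T\Rightarrow\bar Y_T$, which the paper also simply asserts. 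Your version is more elementary and even quantitative (an $O(N^{-1/2})$ rate for averaging out the labels), while the paper's stays inside the polynomial calculus that it uses for the martingale problems throughout, so the density argument is already at hand there. Either proof is acceptable.
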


\begin{proof}
Note that in our case 
\begin{equation}
\mathcal X^{N,T}_0 = \frac{1}{N}\sum_{i = 1}^N  u_i^N(T) \delta_{\kappa_i} +\frac{1}{N} \sum_{i = 1}^N (1-u_i^N(T))\delta_{\kappa_i+1}
\end{equation}
and that $\frac{1}{N} \sum_{i = 1}^N u_i^N(T) = \bar Y^N_T \Rightarrow \bar Y_T$, where $\bar Y$ is the one dimensional Wright-Fisher diffusion defined in the introduction. 
Let $\phi:[0,1]\to [0,1]$ and $m \in \mathbb N$. Then 
\begin{equation}
\begin{split}
E&\left[\left(\int_{[0,2]} \phi\ d\mathcal X^{N,T}_0\right)^m\right] \\
&= \frac{1}{N^m}\sum_{i_1,\ldots,i_m} E\left[\prod_{k =1}^m \left(u_{i_k}^N(T) \phi(\kappa_{i_k}) + (1-u_{i_k}^N(T)) \phi(1+\kappa_{i_k})\right)\right]. 
\end{split}
\end{equation}
Since 
\begin{equation}
1/N^2 \sum_{i =1}^N E\left[\left|u_{i}^N(T) \phi(\kappa_{i}) + (1-u_{i_k}^N(T)) \phi(1+\kappa_{i})\right|\right] \rightarrow 0,
\end{equation}
we may assume in the above sum that the indices are pairwise different. Hence, by the independence of $(\kappa_i)$ and $(u_i)$: 
\begin{equation}
\begin{split}
E&\left[\left(\int_{[0,2]} \phi\ d\mathcal X^{N,T}_0\right)^m\right] \\
&= \frac{1}{N^m}\sum_{i_1,\ldots,i_m}  E\left[\prod_{k =1}^m \left(u_{i_k}^N(T) \phi(\kappa_{i_k}) + (1-u_{i_k}^N(T)) \phi(1+\kappa_{i_k})\right)\right]\\
&= \frac{1}{N^m}\sum_{i_1,\ldots,i_m}  E\left[\prod_{k =1}^m \left(u_{i_k}^N(T) E[\phi(\kappa_{1})] + (1-u_{i_k}^N(T)) E[\phi(1+\kappa_{1})]\right)\right] \\
&= E\left[ \left( \frac{1}{N} \sum_{i =1}^N \left(u_{i}^N(T) E[\phi(\kappa_{1})] + (1-u_{i}^N(T)) E[\phi(1+\kappa_{1})]\right)\right)^m\right] \\
&= E\left[ \left( \bar Y_T^N E[\phi(\kappa_{1})] + (1-\bar Y_T^N) E[\phi(1+\kappa_{1})]\right)^m\right] \\
&\stackrel{N\rightarrow \infty}{\rightarrow } E\left[ \left( \bar Y_T E[\phi(\kappa_{1})] + (1-\bar Y_T) E[\phi(1+\kappa_{1})]\right)^m\right].
\end{split}
\end{equation}
 Since the linear span of functions of the form $F(\mu) = (\int f d\mu)^m$ 
is an algebra that separates points, it is dense in $C(\mathcal M_1([0,2]))$ by the Stone Weierstrass  theorem and the result follows.
\end{proof}

\subsubsection{Proof of Theorem \ref{thm.key} and Lemma \ref{lem.sel.end}}\label{sec.p.key}

Recall the definition of $\mathcal E$ in Lemma \ref{lem.MMMS.generator}. Then it is not hard to see that 
\begin{equation}\label{eq.measure}
\mathcal X^{n,T} := \mathcal E(X^{n,T}),
\end{equation}
solves the $(\Omega,\Pi,\nu)$-martingale problem,  where $X^{n,T}$ is given in Proposition \ref{lem.WFMMS} with initial condition, $\nu$, 
given in Theorem \ref{thm.key}. By the well-posedness of the $(\Omega,\Pi,\mathcal X^T_0)$-martingale problem, together with the fact that analogue to Lemma \ref{l.init.conv}
$X^{n,T}_0 \Rightarrow \mathcal X^T_0$ and Lemma 4.5.1 and Remark 4.5.2 in \cite{EK86} (note that $\mathcal M_1([0,2])$ is compact), 
we get 
\begin{equation}
\mathcal X^{n,T} \Rightarrow \mathcal X^T
\end{equation}
as processes. \par 
In order to complete the proof, we need to observe that 
\begin{itemize}
\item[]{\it the above convergence as well as the convergence in Proposition \ref{p.martingale.FV} also holds when $\mathcal M_1([0,2])$ is equipped with the so called weak atomic topology.} 
\end{itemize}
We do not want to go into detail and refer to \cite{EKatomic} (Section 2 for a general 
introduction and Section 3 for the convergence of the measure-valued Moran model to the measure-valued Fleming-Viot process). The properties we need are the continuity of the map 
\begin{equation}
\mu \mapsto \int_{[0,2]} \mu(\{x\})\mu(dx)
\end{equation}
in this topology and that $\mu_n \rightarrow \mu$ in this topology for purely atomic measures $\mu,\mu_1,\mu_2,\ldots$ implies 
\begin{equation}
\sum_{i \in \mathbb N}|a_n(i) - a(i)| \rightarrow 0,
\end{equation}
where $(a(1),a(2),\ldots)$ is the reordering of the sizes of atoms of $\mu$ in a non increasing way (see Lemma 2.5 (c) in  \cite{EKatomic}). Finally observe that $\mu_n \rightarrow \mu$ with $\mu_n$ purely atomic and $\mu$ continuous implies 
\begin{equation}
\begin{split}
\int_{[0,1]} \mu_n(\{x,x+1\})\mu_n(dx) &+ \int_{(1,2]} \mu_n(\{x,x+1\})\mu_n(dx) \\
&\le 2 \int_{[0,2]} \mu_n(\{x\})\mu_n(dx) \rightarrow 0. 
\end{split}
\end{equation}
This combined with the fact that the Fleming-Viot process is purely atomic for all strict positive 
times (see Theorem 8.2.1 and Theorem 7.2.2 - compare also Section 10.1.1 - in \cite{dawson}) and Lemma \ref{lem.dist.finite} gives the result (see also Remark \ref{r.cont} below). 

\begin{remark}\label{r.cont}
(1) It is not hard to see that $\mathcal M^c([0,2])$, the space of continuous measures (i.e. measures with $\mu(\{x\}) = 0$ for all $x \in [0,2]$), is closed in the weak atomic topology. \par 
\noindent (2) In fact, what we proved above is the continuity of the map
\begin{equation}
\begin{split}
\Phi: \ &\mathcal M^c([0,2]) \cup \mathcal M^a([0,2]) \to \mathbb R_+,\\
& \mu \mapsto \int_{[0,1]} \mu(\{x,x+1\})\mu(dx) + \int_{(1,2]} \mu(\{x,x+1\})\mu(dx) 
\end{split}
\end{equation}
 in the weak atomic topology, where  $\mathcal M^a([0,2])$ is the space of purely atomic measures. Therefore, the result is a consequence of the continuous mapping theorem (see Theorem 8.4.1 in \cite{Bogachev}, Vol. II). 
\end{remark}

%

It remains to prove Lemma \ref{lem.sel.end}. Let 
\begin{equation}
\begin{split}
S^n &:= \inf\left\{t > 0:\ \exists i \text{ s.t. } X^n_t(i) + X^n_t(i+n) = 1\right\} \\
&= \inf\left\{t > 0:\ \sum_{i = 1}^n (X^n_t(i) + X^n_t(i+n))^2 = 1\right\} 
\end{split}
\end{equation}
and let $X^{n,N} = X^{2n,N}$ be the process given in Lemma \ref{lem.WFMS.finite} with initial condition $X^{n,N}_0(i) = 0$ for $i > 2n$ 
and 
\begin{equation}
\left(X^{n,N}_0(i),X^{n,N}_0(i+n)\right) = \left(\frac{\bar Y^N_T}{n},\frac{1-\bar Y^N_T}{n} \right),\qquad i = 1,\ldots,n. 
\end{equation}
 Then, in view of Remark \ref{r.conv.WF}
and Lemma 4.5.1 (see also Remark 4.5.2) in \cite{EK86} we get $X^{n,N} \Rightarrow X^n$ as processes (compare also Lemma \ref{l.init.conv} for the convergence of the initial condition). By the same argument  as in 
Lemma \ref{l.stopping1}, we get that 
\begin{equation}
\sup_{N \in \mathbb N} E[S^{n,N}] \le \sup_{N \in \mathbb N} E[S^{N}]  < \infty,
\end{equation}
where 
\begin{equation}
S^{n,N} :=\inf\left\{t > 0:\ \sum_{i = 1}^n (X^{n,N}_t(i) + X^{n,N}_t(i+n))^2 = 1\right\}.
\end{equation}

In order to see that $E[S^n] < \infty$ we can apply Skorohod's representation theorem (see Section 8.5 in \cite{Bogachev}, Vol. II) and may assume for the following that $X^{n,N} \rightarrow X^n$ almost surely. Assume now, there is a subsequence such that $S^{n,N_k} \rightarrow \bar S < \infty$. Denote by $Q : \Simpl^{n} \to [0,1], \ \underline{x} \mapsto \sum_{i= 1}^n (x_i + x_{i+n})^2$, where as always $x_{2n} = 1- \sum_{i = 1}^{2n-1} x_i$. Then, by Proposition 3.6.5 in \cite{EK86}, the continuity of $Q$ and the continuity of the process $X^n$, 
\begin{equation}
1 \equiv Q(X^{n,N_k}_{S^{n,N_k}}) \rightarrow  Q(X^n_{\bar S})
\end{equation}
and therefore $S^n \le \bar S$. Hence we get $S^n \le \liminf_{N \rightarrow \infty} S^{n,N}$ and by Fateou's lemma: 
\begin{equation}
E[S^n] \le E[\liminf_{N \rightarrow \infty} S^{n,N}] \le \liminf_{N \rightarrow \infty} E[S^{n,N}] \le \sup_{N \in \mathbb N} E[S^{n,N}] < \infty. 
\end{equation}

\newpage

\begin{appendices}

\section{Generator calculations}\label{a.gen}
Here we give the calculations needed in Section \ref{sec.proof.main}. For simplicity we will denote by $y_i := x_i$, $i = 1,\ldots,n$ the
``fit members of family $i$'' and by $z_i := x_{i+n}$, $i = 1,\ldots,n$ the ``unfit members of family $i$''. Moreover, we let 
$\bar Y:= \sum_{i = 1}^n y_i$ be the total number of fit types in the population and note that $\bar Z:= \sum_{i = 1}^n z_i = 1-\bar Y$.
Then, for example, $\phi_1(\underline{x}) = \sum_{i = 1}^n (y_i+z_i)^2$, $\phi_2(\underline{x}) = \sum_{i = 1}^n (y_i+z_i)\cdot (y_i - (y_i+z_i)\bar Y)$. \\

\begin{align*}
\bar Y^n&\sum_{i = 1}^n y_i^2 \\
&\stackrel{L^{\text{res}}}{\mapsto} \quad \bar Y^{n+1}+\left(2n+\frac{n(n-1)}{2}\right) \bar Y^{n-1}\sum_{i = 1}^n y_i^2-\left(1+2n+\frac{n(n-1)}{2}\right)\bar Y^n\sum_{i = 1}^n y_i^2,\\
&\stackrel{L^{\text{mut}}}{\mapsto} \quad 2 \vartheta_0 \bar Y^n\sum_{i = 1}^n y_i z_i
+n\vartheta_0 \bar Y^{n-1}\sum_{i = 1}^n y_i^2 - (n\vartheta_0+(n+2)\vartheta_1) \bar Y^n\sum_{i = 1}^n y_i^2, \\
&\stackrel{L^{\text{sel}}}{\mapsto} \quad (n+2) \alpha \bar Y^n\sum_{i = 1}^n y_i^2- (n+2) \alpha \bar Y^{n+1}\sum_{i = 1}^n y_i^2,
\end{align*}

\begin{align*}
\bar Z^{n} &\sum_{i = 1}^n z_i^2 \\
&\stackrel{L^{\text{res}}}{\mapsto}\quad \bar Z^{n+1}+ \left(2n+\frac{n(n-1)}{2}\right) \bar Z^{n-1} \sum_{i = 1}^n z_i^2-\left(1+2n+\frac{n(n-1)}{2}\right) \bar Z^{n} \sum_{i = 1}^n z_i^2,\\
&\stackrel{L^{\text{mut}}}{\mapsto}\quad 2 \vartheta_1 \bar Z^n\sum_{i = 1}^n y_i z_i +n\vartheta_1 \bar Z^{n-1} \sum_{i = 1}^n z_i^2 -  (n\vartheta_1 + (n+2)\vartheta_0) \bar Z^{n} \sum_{i = 1}^n z_i^2,\\
&\stackrel{L^{\text{sel}}}{\mapsto}\quad -(n+2) \alpha \bar Z^{n} \sum_{i = 1}^n z_i^2  + (n+2) \alpha\bar Z^{n+1} \sum_{i = 1}^n z_i^2 ,
\end{align*}

\begin{align*}
\bar Y^{n} &\sum_{i = 1}^n y_i z_i\\
&\stackrel{L^{\text{res}}}{\mapsto}\quad  \left(n+\frac{n(n-1)}{2}\right)\bar Y^{n-1} \sum_{i = 1}^n y_i z_i - \left(1+2n+\frac{n(n-1)}{2}\right)\bar Y^{n} \sum_{i = 1}^n y_i z_i ,\\
&\stackrel{L^{\text{mut}}}{\mapsto}\quad   \vartheta_1 \bar Y^{n} \sum_{i = 1}^n y_i^2 + \vartheta_0\bar Y^{n} \sum_{i = 1}^n z_i^2  + n\vartheta_0\bar Y^{n-1} \sum_{i = 1}^n y_i z_i -(n+1)(\vartheta_0+\vartheta_1) \bar Y^{n} \sum_{i = 1}^n y_i z_i ,\\
&\stackrel{L^{\text{sel}}}{\mapsto}\quad  (n+1)\alpha \bar Y^{n} \sum_{i = 1}^n y_i z_i  - (n+2)\alpha\bar Y^{n} \sum_{i = 1}^n y_i z_i.
\end{align*}

\end{appendices}

\newpage


\end{document}